\documentclass{aptpub}
\usepackage{dsfont}
\usepackage{mathtools}
\usepackage{tikz}
\usepackage{enumitem}
\usepackage{hyperref}
\usepackage{graphicx}
\graphicspath{ {./Files/} }
\usetikzlibrary{tikzmark,calc,positioning,patterns}

\newcommand{\N}{\mathbb{N}}
\newcommand{\M}{\mathbb{M}}
\newcommand{\R}{\mathbb{R}}

\renewcommand*{\S}{\mathbb{S}}
\newcommand{\1}{\mathds{1}}
\newcommand{\Pp}{\mathbb{P}}
\newcommand{\Ascr}{\mathcal{A}}
\newcommand{\Bscr}{\mathcal{B}}
\newcommand{\Cscr}{\mathcal{C}}
\newcommand{\Fscr}{\mathcal{F}}
\newcommand{\E}{\mathbb{E}}
\newcommand{\dint}{\textup{d}}
\newcommand{\vtf}[1]{\mathsf{#1}} 
\newcommand{\betavt}[2]{\vtf{Beta}{(}{#1,#2}{)}}
\newcommand{\dirvt}[1]{\vtf{Dir}{(}{#1}{)}}
\newcommand{\gdirvt}[1]{\vtf{GDir}{(}{#1}{)}}

\newcommand{\dequ}{\overset{\text{d}}{=}}

\newcommand{\enquote}[1]{``#1''}

\newcommand\indep{\protect\mathpalette{\protect\independenT}{\perp}}
\def\independenT#1#2{\mathrel{\rlap{$#1#2$}\mkern2mu{#1#2}}}

\newcommand{\norm}[1]{||#1||}

\DeclarePairedDelimiter\floor{\lfloor}{\rfloor}

\DeclarePairedDelimiter\ceil{\lceil}{\rceil} 

\makeatletter
\def\smallunderbrace#1{\mathop{\vtop{\m@th\ialign{##\crcr
   $\hfil\displaystyle{#1}\hfil$\crcr
   \noalign{\kern3\p@\nointerlineskip}%
   {\tiny\upbracefill}\crcr\noalign{\kern3\p@}}}}\limits}
\makeatother

\newtheorem{notation}[theorem]{Notation}

 \newcommand*\phantomas[3][c]{%
     \ifmmode
         \makebox[\widthof{\(#2\)}][#1]{\(#3\)}%
     \else
         \makebox[\widthof{#2}][#1]{#3}%
     \fi
 }

\definecolor{red}{RGB}{0,0,0}

\authornames{Anja Jan\ss en, Max Ziegenbalg} 
\shorttitle{Multivariate Regular Variation of Preferential Attachment Models} 


\numberwithin{equation}{section}  
\numberwithin{theorem}{section}      
\numberwithin{lemma}{section}
\numberwithin{remark}{section}
\numberwithin{definition}{section}

\bibliographystyle{APT}                            

\begin{document}
\allowdisplaybreaks
\title{Multivariate Regular Variation of Preferential Attachment Models} 

\authorone[Otto-von-Guericke University Magdeburg]{Anja Janssen} 

\addressone{Faculty of Mathematics, IMST, Universitätsplatz 2, 39106 Magdeburg, Germany} 
\emailone{anja.janssen@ovgu.de} 

\authortwo[Otto-von-Guericke University Magdeburg]{Max Ziegenbalg} 
\addresstwo{Faculty of Mathematics, IMST, Universitätsplatz 2, 39106 Magdeburg, Germany}
\emailtwo{max.ziegenbalg@ovgu.de}
\begin{abstract}
We use the framework of multivariate regular variation to analyse the extremal behaviour of preferential attachment models. To this end, we follow a directed linear preferential attachment model for a random, heavy-tailed number of steps in time and treat the incoming edge count of all existing nodes as a random vector of random length. By combining martingale properties, moment bounds and a Breiman type theorem we show that the resulting quantity is multivariate regularly varying, both as a vector of fixed length formed by the edge counts of a finite number of oldest nodes, and also as a vector of random length viewed in sequence space. A P\'{o}lya urn representation allows us to explicitly describe the extremal dependence between the degrees with the help of Dirichlet distributions. As a by-product of our analysis we establish new results for almost sure convergence of the edge counts in sequence space as the number of nodes goes to infinity. 
\end{abstract}

\keywords{extreme value theory; multivariate regular variation; preferential attachment; random networks; random graphs}

\ams{60G70}{05C80;60F25}

\section{Introduction} 

Preferential attachment random graphs, as initially introduced in \cite{barabasi99} and \cite{bollobas01}, have become prevalent models for the evolution of stochastic networks. The key feature of this class of models is that the current number of edges of a node affects the probability of a new node connecting to it. This underlying ``rich get richer''-dynamic is well-known to lead to a heavy-tailed behavior in the resulting limit distribution of edge counts per node, as the number of nodes goes to infinity. To make this notion of heavy tails more precise, think of a simple preferential attachment model which consists at time $n=0$ of a single node, labelled by $1$, and a single directed edge from this node to itself, i.e.\ a loop. Now, at each following point $n=1,2,\ldots$ in time, we add a new node, labelled by $n+1$, to the graph and attach one directed edge from this new node to itself or another already existing node. This node to connect to is chosen at random, with a probability that depends on the number of incoming edges per node. More precisely, moving from time $n$ to $n+1$ we create a new node that connects to the node with label $i \leq n+2$ with the probability
\begin{equation}\label{Eq:prefattach}  p_i(n):=\frac{\deg_i^{\mbox{\scriptsize{in}}}(n)+\beta}{\sum_{k=1}^{n+2}(\deg_k^{\mbox{\scriptsize{in}}}(n)+\beta)},
\end{equation}
where $\deg_i^{\mbox{\scriptsize{in}}}(n), i=1, \ldots, n+2$ stands for the in-degree of node $i$ at time $n$, i.e.\ the number of incoming edges, and $\beta > 0$ denotes a so-called offset parameter. In this model, the resulting empirical in-degree distribution at time $n$, given by its probability mass function
\begin{equation}\label{Eq:empdeg} i \mapsto p_n(i):=\frac{\sum_{k=1}^{n+1}\mathds{1}_{\{\deg_k^{\mbox{\scriptsize{in}}}(n)=i\}}}{n+1}, \,\;\; n \in \mathbb{N}, 1 \leq i \leq n+1, 
\end{equation}
can be shown to converge as $n \to \infty$ to a limiting probability mass function $i \mapsto p(i), i \in \mathbb{N},$ such that there exists a constant $c>0$ for which
\begin{equation}\label{Eq:empdeglim} \lim_{i \to \infty} \frac{p(i)}{i^{-(2+\beta)}}=c,\end{equation}
 see \cite{bollobasetal03}. We refer to Chapter 8 in \cite{hofstad16} for a concise overview of preferential attachment models, covering both the motivation and a rigorous treatment of their asymptotic behavior. For related work on limit theorems for degree counts, see \cite{krapivskyetal01, mori07, resnicksamorodnitsky16, wangresnick18}.

\begin{figure}[h!]
 \includegraphics[width = 0.64\textwidth]{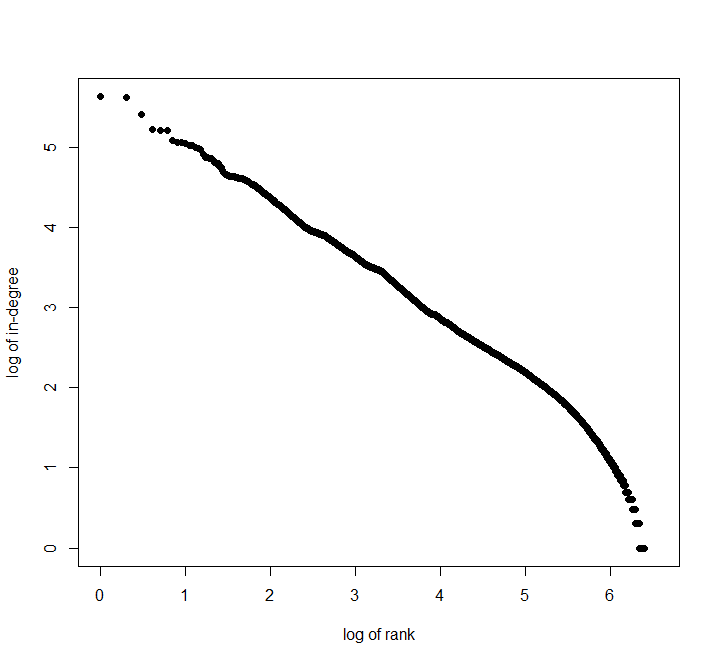}
 \centering
 \caption{\label{fig:zipf2} \textcolor{red}{Zipf plot for in-degress in the network of links between German Wikipedia articles, see \cite{konect13}, \url{http://konect.cc/networks/wikipedia_link_de/}.}}
\end{figure}
\begin{figure}[h!]
 \includegraphics[width = 0.64\textwidth]{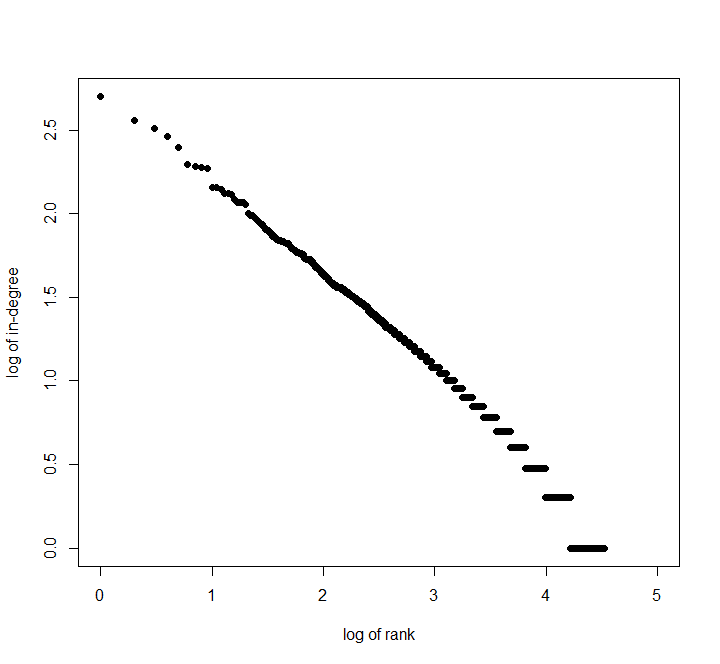}
 \centering
 \caption{\label{fig:zipf} \textcolor{red}{A Zipf plot for in-degress of a simulated preferential attachment model after 100.000 time steps starting from one initial node with offset parameter \(\beta = 1\). For nodes with large degrees (low rank) it shows strong similarities to the real life network from Figure \ref{fig:zipf2}.}}
\end{figure} 
 
Equation~\eqref{Eq:empdeglim} implies that the limiting probability mass function of the in-degrees decays like a power function. In network science, this property of a random graph is typically referred to as ``scale free" and it is often pointed out that a similar power-law behaviour is visible in the empirical degree distributions of real life networks, see \cite{voitalovetal19} for a recent overview on the topic \textcolor{red}{or see the KONECT database \cite{konect13} for an abundance of real life examples.}

\textcolor{red}{One way to illustrate this phenomenon in a data set is by looking at the empirical quantile function of the in-degrees of an observed network. Figures \ref{fig:zipf2} and \ref{fig:zipf} show so-called Zipf plots in the form of the ordered observations of in-degrees on a log-log scale (where rank 1 corresponds to the largest observation), for both a real data set and a simulated realisation of a preferential attachment model. The power law behaviour manifests itself in a clear linear trend in the largest (and for the preferential attachment model: indeed all) observations.} 

\textcolor{red}{However, this kind of asymptotic for the largest observations may not be sufficient to describe the relevant aspects of the extremal behaviour of a preferential attachment model, due to two shortcomings:
\begin{itemize} 
\item Real networks are often evaluated after a random number of nodes have been added to them. For example, if a social network is evaluated at a given date, a random number of participants will have joined it by then. Now, the magnitude of an extremal event, such as the size of the largest degree, primarily depends on the number of participants, and so the distribution of largest degrees depends crucially on the random number of nodes observed.
\item Neither the Zipf plot nor the asymptotic (1.3) provide any information about which nodes will eventually have the largest in-degree. Typically these are the oldest ones, but the exact order of the in-degrees is random and the individual magnitudes show an intricate dependence structure. 
\end{itemize}
As a more concrete example, the asymptotic~\eqref{Eq:empdeglim} is not sufficient to give an estimate for the probability that, say, the sum of the in-degrees of the four oldest nodes exceeds a certain high threshold and that these four in-degrees are in descending order.} 

\textcolor{red}{We will come back to this question in our Example~\ref{Ex:Appl2} below and will more generally provide an asymptotic analysis which allows us to address the two aforementioned shortcomings for a common class of preferential attachment models by using the framework of multivariate regular variation.} To this end, we study a dynamic which is a generalisation of the two models analysed in \cite{pekoz17}, i.e.\ linear directed preferential attachment models in which a new node creates a fixed number of $l \in \mathbb{N}$ outgoing edges, and where we allow for an arbitrary offset parameter $\beta \geq 0$ as in \eqref{Eq:prefattach}. We then follow our network for a \emph{random} number of steps, \textcolor{red}{and assume that this number has a heavy-tailed distribution. It will turn out that both the index of regular variation of this heavy-tailed distribution and the parameters $\beta$ and $l$ of the network dynamic will influence the extremal behavior of the in-degree vector.}
 As tools for our analysis, we extend approaches from \cite{mori05} and \cite{pekoz17} to our more flexible class of models and combine them with a generalisation of a Breiman type result from \cite{wang21} in order to derive the limiting quantities for the asymptotic behaviour of the network. 

 The article is structured as follows: In Section~\ref{Sec:Background} we introduce our model and make a connection to P{\'o}lya urns which allow us to apply martingale methods in order to derive an almost sure limit of mixed powers of finitely many in-degrees as the numbers of nodes goes to infinity (Proposition~\ref{P:martingale_p_factorial}). Next, the concept of regular variation in Banach spaces is introduced in Section~\ref{sec:MRVPARG} along with a Breiman type result (Theorem~\ref{Breiman_generalised}) that allows us to transfer the regular variation of the observed number of nodes to regular variation of the in-degree vector. We first restrict the analysis to a finite stretch of the in-degree vector, i.e.\ we only analyse an arbitrary but fixed number of oldest nodes in the network and derive its regular variation (Theorem~\ref{th:mrvparg} and Corollary~\ref{Cor:DegreeRV}). In order to explicitly describe the extremal dependence structure we deduce the form of the so-called spectral measure. Again by making use of the P{\'o}lya urn representation and its limiting Beta distributions we can show that the spectral measure can be explicitly described with the help of Dirichlet distributions (Theorem~\ref{L:Spectral_distr} and Corollary~\ref{C:Spectral_Measure}). Subsequently, in Section~\ref{sec:Seq_Spaces}, we extend our analysis to sequence space, which allows us for example to keep track of the extremal behaviour of the maximum degree of our network (Corollary~\ref{Cor:MaxRV}). As an auxiliary result we first derive almost sure convergence, as the number of nodes tends to infinity, of the in-degree sequence in sequence space (Proposition~\ref{P:BreiCond}). Finally, we arrive at a regular variation result in sequence space (Theorem~\ref{Th:RVsequence}). 
 Longer proofs and auxiliary results have been deferred to the appendix~\ref{sec:appendix}.
\section{Background on Preferential Attachment Models}\label{Sec:Background}
\subsection{Model Definitions}
In this section we introduce our random graph model and give an equivalent formulation of it that will turn out to be useful. Our model is a generalisation of the model considered in \cite{pekoz17} in that we allow for an arbitrary offset parameter $\beta \geq 0$ in the linear preferential attachment function. 
\subsubsection{The Preferential Attachment Random Graph}\label{Subsec:PAM}
Our object of interest is a graph \(G(n),\,n\in\N_0,\) randomly evolving in a discrete time setting and growing by one vertex in each step from which a fixed amount \(l \in \N\) of edges originate. As per usual a graph is defined as the tuple \(G(n)=(V(n),E(n))\) of its set of vertices and its set of edges. The number of vertices at time \(n\) will be called \(N(n)\) \textcolor{red}{\(=N(0)+n\)}. We will simply number these vertices in their order of appearance, so \(V(n)=\{1,\dots,N(0),N(0)+1,\dots,N(0)+n\}\), where the concrete numbering of the initial \(N(0)\) vertices is of no interest. For the edges, we assume them to be directed and from now on we will only consider the in-degrees \(\deg_k^{\mbox{\scriptsize{in}}}(n), k \in N(n),\) of the vertex $k$ at time \(n\) (as the out-degree is not random). If vertex $k$ does not yet exist at time $n$, we set $\deg_k^{\mbox{\scriptsize{in}}}(n)=0$. Finally, we want to introduce weights \(D_k(n)\) which will determine the probability of a vertex $k$ to get a new edge attached to it at time $n$. We deal with \emph{linear} preferential attachment models and so let \(f:\N_0\to\R_{\geq 0}\) be an affine function with \(f(x)=\alpha x+\beta,\, \alpha,\beta\geq 0\) and apply it to the (random) in-degrees: \(D_k(n):=f(\deg_k^{\mbox{\scriptsize{in}}}(n))\). As the aforementioned probability shall be proportional to $D_k(n)$, without loss of generality we can  restrict ourselves to weight functions \(f\) with \(\alpha=1\), i.e., set
\( D_k(n)=\deg_k^{\mbox{\scriptsize{in}}}(n)+\beta, k \in N(n).\)
By choosing \(\beta=l\) the weight \(D_k(n),\,k>N(0)\) represents the total degree (in-degree \(+\) out-degree) of vertex \(k\).\\
Next, we want to describe the random growth of the graph in more detail. \textcolor{red}{At time \(0\) we start with an arbitrary but deterministic initial graph \(G(0)\) which is finite, meaning that \(|V(0)|+|E(0)|<\infty\).} When transitioning from time \(n\) to \(n+1\) a new vertex is added to the graph establishing a fixed number \(l\in\N\) of edges to already existing vertices in two possible ways. The corresponding figures illustrate as an example how node 4 is added to an existing graph consisting of nodes 1, 2, 3 and edges (solid arrows) in each of the two models. Dashed-dotted arrows show all possibilities for creating a new edge with the corresponding probabilities. We consider two different models.
\begin{flushleft}
\begin{minipage}{0.65\textwidth}
\textbf{Model 0} (no loops):
The first model does not allow for loops, i.e.\ an edge from a vertex to itself. We sequentially add the \(l\) edges, immediately updating the weights of the corresponding vertex. For an \(i\in\{1,\dots,l\}\) let \(K_1,\dots, K_{i-1}\) be the vertices chosen for the first \(i-1\) edges. The probability to attach the \(i\)th edge to vertex \(k\in\{1,\dots,N(n)\}\) then is
\[\frac{D_k(n)+\sum_{j=1}^{i-1}\1_{\{K_j=k\}}}{(i-1)+\sum_{i=1}^{N(n)}D_i(n)},\]
which is the proportion of the weight of \(k\) to the total sum of weights at this particular moment. Note that this formula does not consider the vertex \(N(n)+1\) we are about to add.
\end{minipage}
\begin{minipage}{0.2\textwidth}
\begin{tikzpicture}[
roundnode/.style={circle, draw=black, fill=black!5, thick, minimum size=7mm},
roundnodenew/.style={circle, draw=black!30, fill=black!5, thick, minimum size=7mm},
]
\node[roundnode] (1st) at (-1.5,1.5) {1};
\node[roundnode] (2nd) at (0,0) {2};
\node[roundnode] (3rd) at (1.5,-1.5) {3};
\node[roundnodenew] (4th) at (2,2) {4};

\draw[->, thick] (2nd) -- (1st);
\draw[->, thick] (2nd) .. controls (1,0) and (1.5,-0.5) ..(3rd);
\draw[->, thick] (3rd) -- (2nd);
\draw[->, thick] (3rd) .. controls (-0.5,-1.5) and (-1.5,-0.5) ..(1st);
\draw[->, dashdotted, thick]  (4th) -- (1st) node[midway, anchor=south]{\(\frac{2+\beta}{4+3\beta}\)};
\draw[->, dashdotted, thick]  (4th) -- (2nd) node[midway, anchor=east]{\(\frac{1+\beta}{4+3\beta}\)};
\draw[->, dashdotted, thick]  (4th) -- (3rd) node[midway, anchor=west]{\(\frac{1+\beta}{4+3\beta}\)};
\end{tikzpicture}
\end{minipage}

\end{flushleft}

\begin{flushleft}
\begin{minipage}{0.2\textwidth}
\begin{tikzpicture}[
roundnode/.style={circle, draw=black, fill=black!5, thick, minimum size=7mm},
roundnodenew/.style={circle, draw=black!30, fill=black!5, thick, minimum size=7mm},
]
\node[roundnode] (1st) at (-1.5,1.5) {1};
\node[roundnode] (2nd) at (0,0) {2};
\node[roundnode] (3rd) at (1.5,-1.5) {3};
\node[roundnode] (4th) at (2,2) {4};

\draw[->, thick] (2nd) -- (1st);
\draw[->, thick] (2nd) .. controls (1,0) and (1.5,-0.5) ..(3rd);
\draw[->, thick] (3rd) -- (2nd);
\draw[->, thick] (3rd) .. controls (-0.5,-1.5) and (-1.5,-0.5) ..(1st);
\draw[->, dashdotted, thick]  (4th) -- (1st) node[midway, anchor=south]{\(\frac{2+\beta}{4+4\beta}\)};
\draw[->, dashdotted, thick]  (4th) -- (2nd) node[midway, anchor=east]{\(\frac{1+\beta}{4+4\beta}\)};
\draw[->, dashdotted, thick]  (4th) -- (3rd) node[midway, anchor=west]{\(\frac{1+\beta}{4+4\beta}\)};
\draw[->, dashdotted, thick]  (4th) .. controls (2.2,3.4) and (3.4,2.2) .. (4th) node[midway, anchor=west]{\(\frac{\beta}{4+4\beta}\)};
\end{tikzpicture}
\end{minipage}
\hfill
\begin{minipage}{0.55\textwidth}
\textbf{Model 1} (with loops):
The second model allows for loops by a small modification of the above probability formula. To this end, we set the weight of the new vertex \(D_{N(n)+1}(n)=\beta\). Then, with the same notation as in Model 0, we allow the vertex \(k\) to range over \(\{1,\dots, N(n)+1\}\) and use 
\[\frac{D_k(n)+\sum_{j=1}^{i-1}\1_{\{K_j=k\}}}{(i-1)+\sum_{i=1}^{N(n)+1}D_i(n)}\]
as probability for the \(i\)th edge to attach to it.
\end{minipage}
\end{flushleft}

Thus, the difference between the two models is simply that in Model \(1\) the new vertex is added to the graph before its edges are attached to already existing ones, whereas in Model \(0\) it is added afterwards. 

\subsubsection{An Infinite-Colour Urn Model}\label{sec:InCoUrn}

In order to analyse the limiting behaviour of preferential attachment models, it has proven to be beneficial to exploit a close relationship to generalised P\'{o}lya urns that allows to transfer methods and results from the latter setting to the former. \textcolor{red}{Both P\'{o}lya urns and preferential attachment models are random processes with reinforcement, see \cite{pemantle07} for an overview on the topic. Starting with \cite{berger05}, P\'{o}lya urns have been fruitfully used to study the asymptotic behavior of preferential attachment models, see also \cite{berger14,collevecchio13,garavaglia19, pekoz17, janson21} and \cite[Part II, Section 5]{vanderHofstad24} for related approaches. Our approach below is based on an almost sure representation of one process by the other, given in Lemma~\ref{GraphUrnDuality}, which allows us to derive almost sure convergence statements, thereby complementing and extending results from \cite{pekoz17} and \cite{mori05}.}

\textbf{Urn Model}: 
The idea is that each colour \(k\) in an urn stands for a vertex of the random graph and the number of balls of that colour is the corresponding weight \(D_k\). Since the weights can be any positive real number,  we will also allow our urn to contain non-natural numbers of balls. So let us consider an urn that at time \(0\) contains an arbitrary (natural) number of balls of \(s\) different colours (representing the edges in the graph) plus \(\beta\) balls of each of those colours (the weight functions offset). Then, in the time step \(n\mapsto n+1\), we randomly draw a ball from the urn, where the probability of a certain colour to be drawn, just like in a regular urn, is the ratio of the number of balls of that colour to the total number of balls. Afterwards we return the ball together with an additional one of the same colour. Furthermore, if \(n\) is a multiple of \(l\), we add \(\beta\) more balls of the new colour \(s+\frac{n}{l}\), a mechanic we will refer to as immigration. So one can view $l$ time steps in the urn model as one step for the graph. We will denote the number of balls of colour \(k\) in the urn at time \(n\) with \(C_k(n)\).

We obtain the following lemma describing the aforementioned connection.

\begin{lemma}\label{GraphUrnDuality}
Let \(j \in \{0,1\}\), \(l \in \mathbb{N}\), \(\beta> 0\) and in addition \(N(0)\in \mathbb{N}\) and \((D_1(0),\) \(\dots, D_{N(0)}(0)) \in \mathbb{R}_{\geq 0}^{N(0)}\). Then, there exists a probability space $(\Omega, \Ascr, P)$ which accommodates two stochastic processes \((C_i(n))_{i,n \in \mathbb{N}}\) and \((D_i(n))_{i,n \in \mathbb{N}}\), such that
\begin{itemize}
    \item $(D_i(n))_{i,n \in \mathbb{N}}$ has the same distribution as the random graph Model $j$ described above (where $D_i(n)$ denotes the weight of vertex $i$ after $n$ new vertices have been added) with parameters $l$ and $\beta$ and starting configuration given by $N(0)$ vertices with weights $D_k(0), k=1, \ldots, N(0)$,
	\item $(C_i(n))_{i,n \in \mathbb{N}}$ has the same distribution as the urn model described above (where $C_i(n)$ denotes the number of balls of colour $i$ after $n$ draws have been performed) with parameters $l$ and $\beta$ and starting configuration given by $s:=N(0)+j$ colours, with $D_k(0)$ balls of colour $k=1, \ldots, N(0)$ and, if $j=1$, $\beta$ balls of colour $N(0)+1$, 
\end{itemize}
such that $$ (C_1(n\cdot l),\dots,C_r(n\cdot l))=(D_1(n),\dots,D_r(n)) $$
for any $n \in \N, r<s+n$.
\end{lemma}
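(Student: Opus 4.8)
The plan is to realise both processes on a single probability space driven by a common source of randomness and to verify, by induction on the number of graph steps, that the urn observed along the time scale $n\mapsto nl$ reproduces the dynamics of Model $j$. Concretely, I would take a sequence $(U_m)_{m\ge1}$ of independent variables uniformly distributed on $[0,1]$, use $U_m$ to resolve the draw performed in the $m$-th urn step (splitting $[0,1]$ into subintervals of lengths equal to the current drawing probabilities), and insert the immigration of colour $s+n/l$ at the block boundaries $n\in l\N_0$, with the convention $C_i(n):=0$ for colours not yet present. This defines $(C_i(n))_{i,n}$, which has the law of the urn by construction. I then set $D_i(n):=C_i(nl)$ (and $D_i(n):=0$ before vertex $i$ exists) and claim that $(D_i(n))_{i,n}$ has the law of Model $j$ with the stated parameters. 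The base case $n=0$ is precisely the assumed matching of the two starting configurations; here the offset $s=N(0)+j$ reflects that in Model $1$ the newest vertex is already present, with weight $\beta$, before its outgoing edges are placed.

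The core of the argument is the inductive step, which amounts to a ``one-block lemma'': conditionally on the urn configuration at time $nl$ --- equivalently on $(D_i(n))_i$ --- the block of $l$ micro-steps carried out up to time $(n+1)l$ (an immigration at the boundary together with $l$ draws-with-reinforcement) has exactly the conditional law of the Model $j$ transition from graph time $n$ to $n+1$, in which a new vertex is introduced and its $l$ outgoing edges are attached one by one, the $i$-th edge hitting an admissible vertex $k$ with probability proportional to $D_k(n)+\sum_{j<i}\1_{\{K_j=k\}}$. This is verified by matching probabilities term by term: the $i$-th draw in the block selects colour $k$ with probability $\bigl(C_k(nl)+\#\{\text{reinforcements of }k\text{ so far in the block}\}\bigr)$ over $\bigl((i-1)+\sum_k C_k(nl)\bigr)$ --- with the freshly immigrated colour and its $\beta$ balls included in numerator and denominator when appropriate --- and, using $C_\cdot(nl)=D_\cdot(n)$ together with the placement of the immigration, this coincides with the Model $j$ attachment probability; the no-loops case differs only in that the colour coding the new vertex has not yet immigrated during this block, which is exactly the shift carried by $s=N(0)+j$. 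The rule ``return the drawn ball together with a copy of the same colour'' mirrors the ``immediately update the weight of the chosen vertex'' prescription between successive edges of the new vertex. Combining the one-block lemma with the Markov property in $n$ gives the equality in distribution; the pathwise identities $C_i(nl)=D_i(n)$ then hold on this common space by construction, and the restriction $r<s+n$ is genuinely necessary, because in Model $0$ the vertex $N(0)+n$ already carries weight $\beta$ at graph time $n$ while the colour representing it has not yet immigrated at urn time $nl$.

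The obstacle I expect is organisational rather than conceptual: fixing the exact position of the immigration inside the block of $l$ draws so that a single urn prescription simultaneously encodes \emph{both} Model $0$ and Model $1$ --- with the only difference absorbed into $s=N(0)+j$ --- and keeping track of when a vertex born in a given block is, or is not, a legal target for its own edges. Once that bookkeeping is pinned down, the term-by-term matching of drawing probabilities, and hence the induction, is routine.
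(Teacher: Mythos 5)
Your proposal is correct and follows essentially the same route as the paper, which identifies both the graph and the urn as Markov chains, observes that the one-step transition probabilities coincide (one step being a single edge/ball addition), and invokes the standard construction of a Markov chain on a common probability space. Your explicit realisation via i.i.d.\ uniforms and the ``one-block lemma'' are just an unpacking of that one-line argument, including your correct identification of why $r<s+n$ must be excluded.
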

\begin{proof}
Both the graph and the urn model for the first $s+n-1$ colours / vertices observed after $n$ steps in the respective model (where one step here means adding one ball of a certain colour / one edge to a certain node) form a Markov chain and one easily checks that the transition probabilities for one step coincide. This ensures the existence of the underlying probability space as stated, see \cite[Section 1.1]{kifer86}.
\end{proof}

\begin{remark}\label{rem:UrnGraph}
The urn model provides several advantages for analysing the random graph in our context. First, it performs each step of adding an edge to the graph separately, i.e., in one draw from the urn, and second, it ignores additional structure in the graph which we are not interested in, such as the information which vertex is hanging on the other side of an edge. Another benefit of the urn model is the elimination of the need to distinguish between versions with and without loops when considering transition probabilities. We simply shift the starting amount of colours in the urn by \(j\), which corresponds either to adding the vertex before attaching its edges, or after. However, when we add the color beforehand, it represents a vertex that should not yet exist, which is why we exclude the case \(r=s+n\) in Lemma~\ref{GraphUrnDuality}. In subsequent proofs, we will frequently alternate between the urn and graph models as permitted by this lemma.
\end{remark}

A first observation we can make about the asymptotic behaviour of the individual weights/ball counts is that they will all tend to infinity almost surely:
\begin{lemma}\label{L:InftyDegree}
Assume the graph / urn model of Lemma \ref{GraphUrnDuality}. Then, as \(n \to \infty\), for arbitrary \(k\) the weight \(D_k(n)\) of vertex \(k\) / ball count \(C_k(n)\) of colour \(k\) diverges to infinity almost surely.
\end{lemma}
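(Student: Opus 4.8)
The plan is to argue entirely in the urn model of Lemma~\ref{GraphUrnDuality} and to show that every colour is drawn at infinitely many steps. Since a draw of colour $k$ increases $C_k$ by exactly one, immigration affects only freshly introduced colours, and ball counts are non-decreasing, this immediately forces $C_k(n)\to\infty$; and then $D_k(n)\to\infty$ follows because $D_k(n)=C_k(n\cdot l)$ for every $n$ with $k<s+n$ (apply the lemma with $r=k$).

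First I would fix a colour $k$ and a finite time $n_0$ at which colour $k$ is already present, so that $c:=C_k(n_0)>0$; such an $n_0$ exists since the initial weights coming from a graph satisfy $D_k(0)=\deg_k^{\mbox{\scriptsize{in}}}(0)+\beta\ge\beta>0$ and every colour introduced by immigration enters with $\beta>0$ balls. Next I would bound the total number of balls: at each step one ball is added, and $\beta$ further balls are added every $l$-th step, so $T(n)\le T(0)+n+\beta(n/l+1)$, and hence there is a finite constant $C_1$ with $T(n-1)\le C_1 n$ for all $n\ge 1$.

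Now let $(\Fscr_n)$ be the natural filtration of the urn process and, for $m>n_0$, let $A_m$ be the event that the ball drawn at step $m$ has colour $k$. Monotonicity of $C_k$ gives $C_k(m-1)\ge c$, so
\[ \Pp\big(A_m\mid\Fscr_{m-1}\big)=\frac{C_k(m-1)}{T(m-1)}\ge\frac{c}{C_1\,m},\qquad m>n_0. \]
Since $\sum_{m>n_0} c/(C_1 m)=\infty$, the conditional (L\'evy) form of the second Borel--Cantelli lemma yields that, almost surely, $A_m$ occurs for infinitely many $m$; that is, colour $k$ is drawn infinitely often, whence $C_k(n)\to\infty$ almost surely. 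Transferring back via Lemma~\ref{GraphUrnDuality} gives the statement for the graph weights.

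I do not expect a genuine obstacle here: the argument is elementary, and the only points needing a little care are the index bookkeeping when moving between the urn and the graph (the constraint $r<s+n$ and identifying when colour $k$ is born) and the correct application of the conditional Borel--Cantelli lemma to the adapted sequence $(A_m)$.
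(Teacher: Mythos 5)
Your proof is correct and rests on the same core idea as the paper's (a linear-in-$n$ bound on the total ball count combined with divergence of the harmonic series), but the formalization is genuinely different. The paper fixes a time $n$ at which $C_k(n)>0$ and computes the probability that colour $k$ is \emph{never} drawn again from that time onward as an infinite product, showing it equals $0$; strictly speaking this only yields ``at least one more draw, almost surely'' and leaves an easy but implicit iteration step to pass to ``infinitely many draws.'' You instead invoke the conditional (L\'evy) form of the second Borel--Cantelli lemma for the adapted sequence $(A_m)$, using $\Pp(A_m\mid\Fscr_{m-1})\ge c/(C_1 m)$ and $\sum_m c/(C_1 m)=\infty$, which yields $A_m$ infinitely often in one step. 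Your route is thus slightly more self-contained and closes that small gap directly, at the mild cost of citing a named theorem rather than elementary calculus; the monotonicity of $C_k$ and the bookkeeping for $C_1$ and $n_0$ are all handled correctly, as is the transfer back to the graph via Lemma~\ref{GraphUrnDuality}.
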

\begin{proof}
We take the perspective of the urn model. Consider an arbitrary time \(n=m\cdot l,\,m\in\N\) at which $C_k(n)>0$, where such an $n$ exists since we assumed $\beta>0$. Let \(c\) denote the number of balls of colours other than \(k\) at this time. Then the probability that no further ball of colour \(k\) will be selected from here on is 
\[\prod_{i=0}^\infty\Bigl(1-\frac{C_k(n)}{c+i+\beta\floor{\frac{i}{l}}}\Bigr).\]
The statement follows if we can show that this probability is equal to \(0\) or equivalently that the series
\[-\sum_{i=0}^\infty\ln\Bigl(1-\frac{C_k(n)}{c+i+\beta\floor{\frac{i}{l}}}\Bigr)\]
diverges to infinity. Since \(\ln(1-x)\sim -x\) as \(x\to 0\) said series converges if and only if
\[\sum_{i=0}^\infty\frac{C_k(n)}{c+i+\beta\floor{\frac{i}{l}}}\]
does so. As this expression can be bounded from below by the divergent harmonic series, the result follows.
\end{proof}

\subsection{The Almost Sure Limit and its Moments}\label{Subsec:ASLimit}
By Lemma~\ref{L:InftyDegree} all weights $D_k(n)$ diverge to infinity as time progresses, raising the question about the asymptotic behavior of 
 \[D^r(n):=(D_1(n),\dots,D_r(n)),\]
for an arbitrary but fixed length $r \in \mathbb{N}$ after suitable rescaling when $n \to \infty$ . The following result shows the almost sure convergence and characterises the limiting vector in terms of its mixed moments.
\begin{prop}\label{P:martingale_p_factorial}
 Assume the graph / urn model of Lemma~\ref{GraphUrnDuality}. Throughout this Proposition the times \(n\) or \(m\) are assumed to be greater than or equal to \((r-s)\vee 0\) (in the random graph setting) or  \(l\) times this number (in the urn setting). Define the generalised binomial coefficient 
\[\binom{x}{y}:=\frac{\Gamma(x+1)}{\Gamma(y+1)\Gamma(x-y+1)}\quad \forall x,y \in \mathbb{R}\]
and let \(k_1,\dots,k_r\in \mathbb{R}_{\geq 0}\), \(k:=\sum_{i=1}^r k_i\).
\begin{enumerate}[label=\arabic*)]
\item\label{P:martingale_p_factorial:martingale} There exists a normalising sequence \(c(n,k)\sim n^{k\cdot l/(l+\beta)}\) such that, with respect to the natural filtration \(\Fscr_n:=\sigma(C_k(m),m\leq n, k\leq s+\floor{\frac{n}{l}})\), the process
\begin{align}
\Bigl(\frac{1}{c(n,k)}\prod_{i=1}^r \binom{D_i(n)+k_i-1}{k_i},\Fscr_{n\cdot l}\Bigr)_n\label{term:process_p_factorial}
\end{align}
is a positive  martingale. 
\item\label{P:martingale_p_factorial:asConv} In particular, the above process is almost surely convergent with limit
\begin{align}
\prod_{i=1}^r\frac{\zeta_i^{k_i}}{\Gamma(k_i+1)}\in L^1(\Pp),\label{term:limit_p_factorial}
\end{align}
where \(\zeta_i=\lim_{n\to\infty}n^{-l/(l+\beta)}D_i(n)\).
\item
The limit in 2) closes the martingale to the right, i.e.
\[\E\Bigl(\prod_{i=1}^r\frac{\zeta_i^{k_i}}{\Gamma(k_i+1)}\Big|\Fscr_{n\cdot l}\Bigr)=\frac{1}{c(n,k)}\prod_{i=1}^r \binom{D_i(n)+k_i-1}{k_i} \quad \text{for every }n\in\N.\]
Most notably this means (with \(c(n,0)=1\))
\begin{align}
\E\Bigl(\prod_{i=1}^r\zeta_i^{k_i}\Bigr)=\prod_{i=1}^{r}\Biggl[&\Gamma(k_i+1)\cdot\frac{c((i-s)\vee 0,k_1+\dots+k_{i-1})}{c((i-s)\vee 0,k_1+\dots+k_i)}\nonumber\\
&\cdot\begin{cases}\binom{D_i(0)+k_i-1}{k_i}\quad &\text{if }i\leq N(0)\\ \binom{\beta+k_i-1}{k_i}\quad &\text{if }i> N(0)\end{cases}\Biggr]\label{eq:expec_zetap}
\end{align}
\end{enumerate}
\end{prop}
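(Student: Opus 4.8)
The plan is to do everything in the infinite-colour urn of Lemma~\ref{GraphUrnDuality} and transport the conclusions back to the graph via the identity $C_i(nl)=D_i(n)$. For Part~1 I would work one urn step at a time. Let $T(m)$ be the total ball count at urn time $m$; crucially this is \emph{deterministic}, with $T(m)=T(0)+m+\beta\lfloor m/l\rfloor\sim m(l+\beta)/l$. A single draw increases the count of at most one of the colours $1,\dots,r$ by one, while immigration only ever adds colours of index $>r$; combining this with the draw probabilities $C_i(m)/T(m)$ and the elementary identity $\binom{x+k_i}{k_i}=\frac{x+k_i}{x}\binom{x+k_i-1}{k_i}$, I expect to obtain, for every urn time $m$ at which colours $1,\dots,r$ are present,
\[\E\Bigl[\prod_{i=1}^r\binom{C_i(m+1)+k_i-1}{k_i}\,\Big|\,\Fscr_m\Bigr]=\frac{T(m)+k}{T(m)}\prod_{i=1}^r\binom{C_i(m)+k_i-1}{k_i}.\]
Defining $c(n,k)$ as the product of the deterministic factors $\frac{T(m)+k}{T(m)}$ over the first $nl$ urn steps (normalised so that $c(0,k)=1$) then makes the process in \eqref{term:process_p_factorial} a positive martingale, positivity being immediate once the colours are present. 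For the rate $c(n,k)\sim n^{kl/(l+\beta)}$ I would express $c(n,k)$ through ratios of Gamma functions and apply Stirling, using $\sum_{m<nl}1/T(m)=\tfrac{l}{l+\beta}\log n+O(1)$.

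For Part~2, a positive martingale converges almost surely, so \eqref{term:process_p_factorial} has an a.s.\ limit; the work is to identify it. Applying Part~1 with $r=1$, $k_1=1$ shows $D_i(n)/c(n,1)$ converges a.s., so (since $c(n,1)\sim n^{l/(l+\beta)}$) the limit $\zeta_i=\lim_n n^{-l/(l+\beta)}D_i(n)$ exists a.s., and Lemma~\ref{L:InftyDegree} gives $D_i(n)\to\infty$ a.s. Using $\binom{x+k_i-1}{k_i}=\Gamma(x+k_i)/(\Gamma(k_i+1)\Gamma(x))\sim x^{k_i}/\Gamma(k_i+1)$ as $x\to\infty$ I would rewrite
\[\frac{1}{c(n,k)}\prod_{i=1}^r\binom{D_i(n)+k_i-1}{k_i}=\frac{n^{kl/(l+\beta)}}{c(n,k)}\prod_{i=1}^r\frac{1}{\Gamma(k_i+1)}\Bigl(\frac{D_i(n)}{n^{l/(l+\beta)}}\Bigr)^{k_i}\bigl(1+o(1)\bigr)\longrightarrow\prod_{i=1}^r\frac{\zeta_i^{k_i}}{\Gamma(k_i+1)}\]
almost surely, which must therefore be the martingale limit. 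At the smallest admissible (fixed, finite) time $n_0$ the quantity in \eqref{term:process_p_factorial} is a bounded random variable, so Fatou's lemma gives $\prod_i\zeta_i^{k_i}/\Gamma(k_i+1)\in L^1(\Pp)$.

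For Part~3 the point is that this limit closes the martingale, for which I would verify uniform integrability through $L^2$-boundedness. Log-convexity of $\Gamma$ yields $\binom{x+k_i-1}{k_i}^2\le\binom{2k_i}{k_i}\binom{x+2k_i-1}{2k_i}$, and since the same construction with exponents $2k_i$ is again a positive martingale with constant finite expectation, this gives
\[\E\Bigl[\Bigl(\tfrac{1}{c(n,k)}\prod_i\binom{D_i(n)+k_i-1}{k_i}\Bigr)^{2}\Bigr]\le\Bigl(\prod_i\binom{2k_i}{k_i}\Bigr)\frac{c(n,2k)}{c(n,k)^2}\,\E\Bigl[\tfrac{1}{c(n,2k)}\prod_i\binom{D_i(n)+2k_i-1}{2k_i}\Bigr]=O\!\left(\tfrac{c(n,2k)}{c(n,k)^2}\right),\]
and $c(n,2k)/c(n,k)^2$ is bounded by the asymptotics of Part~1; hence $M_n=\E\bigl[\prod_i\zeta_i^{k_i}/\Gamma(k_i+1)\mid\Fscr_{nl}\bigr]$. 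To then derive \eqref{eq:expec_zetap} I would run a downward induction on $r$: evaluating this closure identity for the first $i$ colours at the time $(i-s)\vee 0$ at which colour/vertex $i$ enters — where its weight is the deterministic value $D_i(0)$ if $i\le N(0)$, respectively $\beta$ if $i>N(0)$, hence $\Fscr$-measurable and removable from the conditional expectation — expresses $\E\bigl(\prod_{j\le i}\zeta_j^{k_j}\bigr)$ in terms of $\E\bigl(\prod_{j\le i-1}\zeta_j^{k_j}\bigr)$ times $\Gamma(k_i+1)\,c((i-s)\vee 0,k_1+\dots+k_{i-1})/c((i-s)\vee 0,k_1+\dots+k_i)$ and the relevant binomial, using that the $(i-1)$-colour martingale has constant expectation so may be evaluated at that same time. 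Iterating down to an $i\le s$, where $c(0,\cdot)=1$ and all weights are deterministic, reproduces the product formula.

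The main obstacle I anticipate is the closure step in Part~3: a positive martingale with an $L^1$ limit need not be uniformly integrable, so escape of mass must genuinely be ruled out, and the $L^2$-bound above — resting on log-convexity of $\Gamma$ and on the fact that doubling the exponents again yields a martingale — is the crux. The remaining technical points, namely the precise asymptotics of the deterministic normaliser $c(n,k)$ across the immigration jumps and the index bookkeeping (admissible times, the Model~0 versus Model~1 distinction, the base case $r\le s$) in the induction for \eqref{eq:expec_zetap}, are routine but need care.
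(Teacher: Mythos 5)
Your proposal reproduces the paper's strategy step by step: the one-draw conditional-expectation identity in the urn (with the deterministic total ball count), the normaliser $c(n,k)$ defined by the telescoping product of the factors $(T(m)+k)/T(m)$ and identified as a ratio of Gamma functions so that Stirling gives $c(n,k)\sim n^{kl/(l+\beta)}$, Doob's theorem for almost sure convergence, Stirling again (via $\binom{x+k-1}{k}\sim x^{k}/\Gamma(k+1)$, which is exactly Lemma~\ref{L:ineq_mart_power_form}) to identify the limit as $\prod_i\zeta_i^{k_i}/\Gamma(k_i+1)$, uniform integrability to close the martingale, and the downward induction over the times $n_0^i=l((i-s)\vee 0)$ at which each colour becomes deterministic, yielding \eqref{eq:expec_zetap}. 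The one place where you deviate is the uniform integrability argument: the paper bounds the $(1+\epsilon)$-th power of the martingale for an arbitrary $\epsilon>0$ by bracketing each binomial between constant multiples of a power (Lemma~\ref{L:ineq_mart_power_form}) and then comparing with the $p_i=k_i(1+\epsilon)$ martingale, whereas you fix $\epsilon=1$ and use the exact pointwise inequality
\[
\binom{x+k_i-1}{k_i}^2\le\binom{2k_i}{k_i}\binom{x+2k_i-1}{2k_i},
\]
which is precisely $\Gamma(x+k_i)^2\le\Gamma(x)\Gamma(x+2k_i)$, a consequence of log-convexity of $\Gamma$. That gives $L^2$-boundedness directly, with no hidden comparison constants, once one notes that $c(n,2k)/c(n,k)^2$ converges by the stated asymptotics. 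This is an equivalent and arguably slightly cleaner route to the same conclusion; the essential structural insight — that doubling (or $(1+\epsilon)$-scaling) the exponents again produces a positive martingale with constant expectation — is common to both. Your Fatou argument for $L^1$-membership of the limit and your reading of Part~2 (taking $r=i$, $k_i=1$, $k_j=0$ for $j<i$ to obtain a.s.\ existence of $\zeta_i$) are both sound and match the paper's use of Doob's theorem.
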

The proof of this proposition is deferred to Section \ref{Sec:MartingaleProof}.

\section{Regular Variation of the Finite-Dimensional Weight Vector}\label{sec:MRVPARG}
The starting point for the asymptotic analysis in Section~\ref{Subsec:ASLimit} was to let the time index $n$ go to infinity and derive properties of the almost sure limit of the weight vector after proper, time-dependent standardization. In this section, we will take a different approach by focusing on the random vector
   \begin{equation}\label{Eq:D(N):Def} D^r(N):=(D_1(N),\dots,D_r(N)),\end{equation}
of the weights of the oldest $r$ nodes \textcolor{red}{(including the ones already existing at time 0)} evaluated at a \emph{random} time $N$ (where we set $D_k(N)=0$ if node $1 \leq k \leq r$ does not yet exist at time $N$) and how the extremes of this random vector can be described. We can think of $N$ being the random number of steps that our network has already gone through at time of observation. The extremal behavior of $D^r(N)$ will thus both depend on the value of $N$ and the network dynamics. In order to further analyse the behavior, we first introduce a suitable framework.
\subsection{Background on Multivariate Regular Variation}\label{subsec:BackgroundMRV}
When dealing with extremes of random vectors $X$ we typically deem an outcome extreme if its norm $\norm{X}$ exceeds a high threshold and we seek for a stabilising behavior after suitable normalisation if the threshold goes to infinity. This leads to the definition of multivariate regular variation, where the concept can be formalised in several slightly different ways. While the original definition was based on vague convergence, see \cite{deHaan79,resnick87}, we will employ the more flexible concept of $\mathbb{M}$-convergence, see \cite{Hult06,kulik20,lindskog14}, which will allow us to extend our results to the sequence space in Section~\ref{sec:Seq_Spaces}. 
\begin{definition}\label{D:MRV}
Let $(B,\norm{\cdot})$ be a separable Banach space and set \(B^*:=B\setminus \{0\}\), write \(\Bscr(B^*)\) for the Borel $\sigma$-algebra on $B^\ast$ and let \(\M(B^*)\) denote the space of measures on \(\Bscr(B^*)\) which are finite on sets \emph{bounded away from $0$}, i.e.\ sets $A$ such that $\inf_{a \in A}\norm{a}>0$.  
Write furthermore \(\Cscr(B^\ast)\) for the set of all non-negative, bounded and continuous functions \(f\) on \(B^*\) for which there exists an \(\epsilon>0\) such that \(f\) vanishes on \(B_\epsilon(0):=\{b \in B: \|b\|<\epsilon\}\).

We say a random variable \(X\) with values in \(B\) is \emph{multivariate regularly varying in $B^\ast$} if there exists a non-trivial measure \(\mu \in \M(B^*)\), also called the \emph{limit measure}, such that
\begin{equation}\label{Eq:Def:MRV}\frac{\Pp(X/t\in \cdot)}{\Pp(\norm{X}>t)}\overset{t\to\infty}{\to}\mu(\cdot) \text{ in }\mathbb{M}(B^*),\end{equation}
 where 
 \begin{equation}\label{Eq:M:Conv} \mu_t\to\mu\text{ in }\mathbb{M}(B^*)\,\,\,\,\Leftrightarrow\,\,\,\,\mu_t(f):=\int f\dint \mu_t\to\int f\dint \mu=:\mu(f) \quad\forall f\in\Cscr(B^\ast).\end{equation}
The mode of convergence defined in \eqref{Eq:M:Conv} is called \emph{$\M$-convergence}.
\end{definition}
\begin{remark}\label{Rem:RVofNorm} 
Definition~\ref{D:MRV} implies that for any multivariate regularly varying $X$ on $B \setminus \{0\}$, the random variable $\norm{X}$ is (univariate) regularly varying on $\R_{>0}$ with some index \(\alpha\), meaning that
$$\lim_{t \to \infty}\frac{\Pp(\norm{X}>\lambda t)}{\Pp(\norm{X}>t)}=\lambda^{-\alpha} \;\;\; \forall\, \lambda>0.$$  
\end{remark}

For the remainder of this section \ref{sec:MRVPARG}, we will focus on the special case \(B = \mathbb{R}^n\) which also justifies the naming of \emph{multivariate} regular variation. However, in preparation for section \ref{sec:Seq_Spaces}, where we will consider the weight sequence as a whole, we will need to use the more general concept of a separable Banach space. This means that the choice of the norm matters in general (how much so, we will see in section \ref{sec:Seq_Spaces}), while in \(\R^n\), at least for qualitative considerations, it does not. 

The limit measure $\mu$ has a certain structure implied by the multiplicative standardisation used in \eqref{Eq:Def:MRV}.

\begin{theorem}[Theorem 3.1 in \cite{Hult06}]
Let \(X\) be a multivariate regularly varying random variable with limit measure \(\mu\). Then there exists an \(\alpha> 0\) such that for all \(\lambda>0\) and \(A\in\Bscr(B^*)\) we have
\[\mu(\lambda A)=\lambda^{-\alpha}\mu(A).\]
\end{theorem}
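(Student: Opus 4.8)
The plan is to derive the scaling relation $\mu(\lambda A)=\lambda^{-\alpha}\mu(A)$ directly from the defining limit \eqref{Eq:Def:MRV} by exploiting the homogeneity of the scaling map $x\mapsto x/t$. First I would recall that, by Remark~\ref{Rem:RVofNorm}, the function $t\mapsto \Pp(\norm{X}>t)$ is regularly varying at infinity; since it is also monotone, a standard Karamata-type argument gives that its index is some $-\alpha$ with $\alpha\geq 0$, i.e. $\Pp(\norm{X}>\lambda t)/\Pp(\norm{X}>t)\to\lambda^{-\alpha}$ for every $\lambda>0$. (One should note that a priori $\alpha=0$ is not excluded here; ruling this out, and hence showing $\alpha>0$, is part of the work — see below.)

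Next, fix $\lambda>0$ and a test function $f\in\Cscr(B^\ast)$, and consider the function $f_\lambda(\cdot):=f(\lambda\,\cdot\,)$, which again lies in $\Cscr(B^\ast)$ because $f$ is bounded, continuous, and vanishes on a ball around $0$ (the vanishing radius merely rescales by $1/\lambda$). Writing $\mu_t$ for the left-hand side of \eqref{Eq:Def:MRV}, a change of variables $s=t/\lambda$ yields
\begin{align*}
\int f\,\dint(\mu_t\circ\lambda^{-1})
&=\int f(\lambda\,\cdot\,)\,\dint\mu_t
=\frac{\E\bigl[f(\lambda X/t)\bigr]}{\Pp(\norm{X}>t)}
=\frac{\Pp(\norm{X}>t/\lambda)}{\Pp(\norm{X}>t)}\cdot\frac{\E\bigl[f(X/(t/\lambda))\bigr]}{\Pp(\norm{X}>t/\lambda)}.
\end{align*}
As $t\to\infty$ the first factor converges to $\lambda^{\alpha}$ by the univariate regular variation just recalled, and the second factor converges to $\int f\,\dint\mu$ by \eqref{Eq:Def:MRV} applied along $t/\lambda\to\infty$. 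Hence $\int f\,\dint(\mu_t\circ\lambda^{-1})\to\lambda^{\alpha}\int f\,\dint\mu$. On the other hand, $\mu_t\to\mu$ in $\M(B^\ast)$ together with the fact that $A\mapsto A/\lambda$ is a homeomorphism of $B^\ast$ preserving boundedness away from $0$ gives $\mu_t\circ\lambda^{-1}\to\mu\circ\lambda^{-1}$, so $\int f\,\dint(\mu_t\circ\lambda^{-1})\to\int f\,\dint(\mu\circ\lambda^{-1})$. Comparing the two limits for every $f\in\Cscr(B^\ast)$ and using that $\M$-limits are unique, we conclude $\mu(\lambda^{-1}A)=\lambda^{\alpha}\mu(A)$, i.e. $\mu(\lambda A)=\lambda^{-\alpha}\mu(A)$ after relabelling.

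The routine part is the change-of-variables bookkeeping and checking that $f_\lambda\in\Cscr(B^\ast)$ and that rescaling preserves sets bounded away from $0$. The main obstacle is the degenerate case: the argument above shows $\mu(\lambda A)=\lambda^{-\alpha}\mu(A)$ for \emph{some} $\alpha\geq 0$, but to get $\alpha>0$ one must use that $\mu$ is non-trivial and $\sigma$-finite on $\M(B^\ast)$ in an essential way. If $\alpha=0$, then $\mu$ would be invariant under all dilations; since $\mu$ is finite on every set bounded away from $0$ but (being non-trivial) assigns positive mass to some such set $A_0$ with $\inf_{a\in A_0}\norm a=:\delta>0$, dilation-invariance forces $\mu(\{b:\norm b>\delta/2\})=\infty$ to be approached by infinitely many disjoint annuli of equal positive mass, contradicting finiteness on $\{b:\norm b>\delta/2\}$. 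Making this contradiction precise — essentially decomposing $B^\ast$ into the dyadic annuli $\{2^{k}\leq\norm b<2^{k+1}\}$, $k\in\Z$, noting each has finite $\mu$-mass, that consecutive ones have equal mass under $\alpha=0$, and that summing over $k\geq 0$ must stay finite — is the one genuinely non-formal step, and it is exactly where the structure of $\M(B^\ast)$ (finiteness away from $0$) is indispensable.
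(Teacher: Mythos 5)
Your proposal is correct. Note that the paper itself gives no proof of this statement -- it is imported verbatim as Theorem 3.1 of \cite{Hult06} -- so there is nothing internal to compare against; your argument (pushing the defining limit \eqref{Eq:Def:MRV} forward under $x\mapsto\lambda x$, using that $f(\lambda\,\cdot)\in\Cscr(B^\ast)$ and that $\Pp(\norm{X}>t)$ is regularly varying, then excluding $\alpha=0$ via the dyadic-annuli contradiction with finiteness on sets bounded away from $0$) is the standard one and is essentially how the cited source proceeds. The only point worth flagging is that the final identification $\mu(\lambda^{-1}A)=\lambda^{\alpha}\mu(A)$ for all Borel $A$ rests on measures in $\M(B^\ast)$ being uniquely determined by their integrals against $\Cscr(B^\ast)$, which you invoke as ``uniqueness of $\M$-limits''; that is exactly the right fact and is available in \cite{lindskog14}.
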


We call \(\alpha\) the index of regular variation or tail index. To read the \(\alpha\) directly from the limit measure one may find the second of the following alternative characterisations of multivariate regular variation useful.

\begin{theorem}[Corollary 4.4 in \cite{lindskog14}]\label{th:chara_MRV}
Let \(X\) be a random variable in a separable Banach space \(B\). Each of the following statements is equivalent to \(X\) being multivariate regularly varying with index \(\alpha\).
\begin{enumerate}[label=\alph*)]
\item \label{th:chara_MRV_1} There exist a non-trivial measure \(\mu\) and an increasing function \(b(t)\) such that
\begin{align*}
t\Pp(X/b(t)\in \cdot)\overset{t\to\infty}{\to}\mu\text{ in }\mathbb{M}(B^\ast)\,\,\text{ and }\,\,\frac{b(\lambda t)}{b(t)}\overset{t\to\infty}{\to}\lambda^{\frac{1}{\alpha}}\,\,\forall\lambda>0.
\end{align*}
\item \label{th:chara_MRV_2} There exists a probability measure \(S\) on the unit sphere \(\S_{\norm{\cdot}}:=\{x \in B: \norm{x}=1\}\), also called the spectral measure, such that
\[\frac{\Pp(X/t\in \cdot)}{\Pp(\norm{X}>t)}\overset{t\to\infty}{\to}(\nu_\alpha\otimes S)\circ h^{-1}\text{ in }\mathbb{M}(B^\ast)\]
where \(\nu_\alpha\) is a measure on \(\R_{>0}\) determined by its values on the right-unbounded intervals \(\nu_\alpha(t,\infty):=t^{-\alpha}\) and \(h:\R_{>0}\times\S_{\norm{\cdot}}\to B\) with \(h(x,y)=x\cdot y\) is the polar coordinate transformation. 
\end{enumerate}
\end{theorem}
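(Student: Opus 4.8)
Since the displayed theorem is quoted from \cite{lindskog14}, I would prove it by establishing the equivalences between multivariate regular variation in the sense of Definition~\ref{D:MRV}, characterisation~\ref{th:chara_MRV_1}, and characterisation~\ref{th:chara_MRV_2}, taking Definition~\ref{D:MRV} as the pivot. The preliminary step is to collect the structural consequences of Definition~\ref{D:MRV}. Writing $\mu_t:=\Pp(X/t\in\cdot)/\Pp(\norm{X}>t)$, the cited theorem of \cite{Hult06} gives $\mu(\lambda A)=\lambda^{-\alpha}\mu(A)$ for all $\lambda>0$ and some exponent $\alpha$, with $\alpha>0$ because $\mu$ is non-trivial and finite on sets bounded away from $0$. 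Testing \eqref{Eq:Def:MRV} against continuous functions that sandwich the indicator of a shell $\{\epsilon<\norm{x}\le M\}$ (a Portmanteau-type consequence of $\M$-convergence, valid for all but the countably many radii whose sphere $\mu$ charges) then yields that $t\mapsto\Pp(\norm{X}>t)$ is regularly varying of index $-\alpha$, while the normalisation by $\Pp(\norm{X}>t)$ forces $\mu(\{\norm{x}>1\})=1$.

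For characterisation~\ref{th:chara_MRV_1} I would use the generalised inverse $b(t):=\inf\{s>0:\Pp(\norm{X}>s)\le 1/t\}$. Since $\Pp(\norm{X}>\cdot)$ is $-\alpha$-regularly varying, the inversion theorem for regularly varying functions gives $b(\lambda t)/b(t)\to\lambda^{1/\alpha}$ and $t\,\Pp(\norm{X}>b(t))\to 1$, whence $t\,\Pp(X/b(t)\in\cdot)=\mu_{b(t)}(\cdot)\cdot t\,\Pp(\norm{X}>b(t))\to\mu$ in $\M(B^\ast)$, being the product of an $\M$-convergent family along $b(t)\to\infty$ with a convergent scalar sequence. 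Conversely, given \ref{th:chara_MRV_1}, one has $t\,\Pp(\norm{X}>b(t))\to\mu(\{\norm{x}>1\})=:c\in(0,\infty)$, which together with $b$ being $1/\alpha$-regularly varying implies (again by the inversion theorem) that $\Pp(\norm{X}>\cdot)$ is $-\alpha$-regularly varying; reparametrising $s=b(t)$ and dividing by $t\,\Pp(\norm{X}>b(t))\to c$ then gives $\mu_s\to\mu/c$, and using the just-established regular variation of $\Pp(\norm{X}>\cdot)$ (and the Uniform Convergence Theorem) this extends from $s=b(t)$ to all $s\to\infty$, establishing \eqref{Eq:Def:MRV} with limit measure $\mu/c$.

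For characterisation~\ref{th:chara_MRV_2} I would use that $h:\R_{>0}\times\S_{\norm{\cdot}}\to B^\ast$, $h(x,y)=xy$, is a homeomorphism with inverse $z\mapsto(\norm{z},z/\norm{z})$ that carries $\{\norm{z}>\epsilon\}$ onto $(\epsilon,\infty)\times\S_{\norm{\cdot}}$; hence the polar pushforward preserves the classes of bounded-away-from-$0$ sets and of admissible test functions, so \eqref{Eq:Def:MRV} is equivalent to the corresponding $\M$-convergence of the pushforwards on $\R_{>0}\times\S_{\norm{\cdot}}$. Setting $S(\cdot):=\mu(\{\norm{z}>1,\ z/\norm{z}\in\cdot\})$, the $-\alpha$-homogeneity of $\mu$ gives $\mu(\{\norm{z}>r,\ z/\norm{z}\in C\})=r^{-\alpha}S(C)$, i.e.\ the pushforward of $\mu$ equals $\nu_\alpha\otimes S$ on the $\pi$-system of rectangles $(r,\infty)\times C$ and hence everywhere, while the preliminary step gives $S(\S_{\norm{\cdot}})=1$; equivalently $\mu=(\nu_\alpha\otimes S)\circ h^{-1}$. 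The reverse implication is immediate, since $(\nu_\alpha\otimes S)\circ h^{-1}$ is a non-trivial element of $\M(B^\ast)$, so any $X$ satisfying \ref{th:chara_MRV_2} satisfies Definition~\ref{D:MRV}, with index $\alpha$ read off from $\nu_\alpha$.

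The main obstacle is not conceptual but consists of the measure-theoretic bookkeeping forced by the fact that $\M$-convergence only probes functions vanishing near $0$: every passage between convergence of $\mu_t(f)$, convergence of $\mu_t(A)$ on $\mu$-continuity sets $A$, and regular variation of $\Pp(\norm{X}>\cdot)$ needs the correct Portmanteau statement for $\M$-convergence and a careful choice of continuity radii; one must check that the homeomorphism $h$ and the reparametrisation $t\leftrightarrow b(t)$ genuinely respect the ``bounded away from $0$'' structure underlying $\M(\cdot)$; the identification of the pushforward with $\nu_\alpha\otimes S$ must be extended from rectangles to the full Borel $\sigma$-algebra, for which the finiteness of $\mu$ on each $\{\norm{z}>\epsilon\}$ supplies the exhausting sequence of finite measure needed for uniqueness; and the generalised-inverse manipulations in the second step require care when $\Pp(\norm{X}>\cdot)$ is neither continuous nor strictly decreasing.
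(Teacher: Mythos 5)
The paper does not prove this statement: Theorem~\ref{th:chara_MRV} is cited verbatim as Corollary~4.4 of \cite{lindskog14} and is used as an off-the-shelf tool, so there is no in-paper proof to compare your attempt against. Your reconstruction is nonetheless a plausible account of how the result is established in that reference: the homogeneity of $\mu$ from \cite{Hult06}, the deduction that $\Pp(\norm{X}>\cdot)$ is $(-\alpha)$-regularly varying via a Portmanteau argument on annuli, the generalised-inverse construction of $b$ together with the inversion and Uniform Convergence Theorems to toggle between the two normalisations in~\ref{th:chara_MRV_1}, and the identification of the polar pushforward with $\nu_\alpha\otimes S$ on the $\pi$-system of rectangles $(r,\infty)\times C$ using the homeomorphism $h$ and the homogeneity of $\mu$, are all the standard ingredients. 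One remark worth making precise in your preliminary step: the fact that $\{\norm{x}>1\}$ is a $\mu$-continuity set, which you need to read off $\mu(\{\norm{x}>1\})=1$ and $S(\S_{\norm{\cdot}})=1$, is not merely a ``countably many exceptional radii'' issue but can be settled outright, since by $(-\alpha)$-homogeneity $\mu(\{\norm{x}=r\})=r^{-\alpha}\mu(\{\norm{x}=1\})$, so a positive mass on any sphere would, summed over countably many radii in $(1,2)$, contradict the finiteness of $\mu$ on $\{1<\norm{x}<2\}$; hence $\mu$ charges no sphere at all. With that tightened, your sketch is sound, but it is proving a cited background result rather than anything the paper itself argues.
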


Note that despite the suggesting notation here, limit measures coming from different normalising functions \(b(t)\) may differ up to a multiplicative factor. To obtain the limit measure from~\eqref{Eq:Def:MRV} in Definition \ref{D:MRV} one may choose the normalising function \(b(t):=F^\leftarrow_{\norm{X}}(1-t^{-1})\), where \(F^{\leftarrow}_{\norm{X}}(u):=\inf\{x \in \R: \Pp(\norm{X} \leq x)\geq u\}\) stands for the generalised inverse of the distribution function of \(\norm{X}\).

\subsection{Main Result}
In this chapter we will start with our analysis of the asymptotic behaviour of the weight vector in the framework of multivariate regular variation. To this end, consider first for fixed \(n,r\in\N\) the random vector $D^r(n)=(D_1(n),\dots,D_r(n))$,
where we set \(D_k(n)=0\) if \(k>N(0)+n\). One immediately finds a deterministic upper bound for the \(\ell_1\)-norm of the vector in 
\[\norm{D^r(n)}_1\leq\norm{D^r(0)}_1+n\cdot(l+\beta) ,\]
which in view of Remark~\ref{Rem:RVofNorm} rules out multivariate regular variation of $D^r(n)$. We thus need the number of nodes in the system to be random in order to witness heavy-tailed behaviour of the weight vector. Viewing the number of nodes or, equivalently, the number of completed steps in the evolution of a network as random is also reasonable from a modelling perspective: We typically do not observe a network after an a priori known number of steps, in particular if new nodes are added according to some random arrival process instead of regularly over time. The results in Section~\ref{Subsec:ASLimit} imply that the number of nodes is the driving factor behind an extremal behaviour of node weights. Our approach is thus to assume a heavy-tailed behaviour of $N$, which we choose to be (univariate) regularly varying, \textcolor{red}{and interpret very large real life networks as the extremal realisations of \(N\)}.
 Many typical regularly varying distributions are continuous, such as the Pareto or Student-$t$ distributions. However, if we round any regularly varying random variable \(Y\) to \(\lfloor Y \rfloor\), the result remains regularly varying, leading to a wide variety of possible distributions for \(N \in \N_0\). 
 
 Now, an often observed principle in extreme value theory is derived from Breiman's Lemma and applies in settings in which, roughly speaking, a system consists of a heavy-tailed component and a light-tailed one. Then, the heavy-tailed component will typically drive the extremal behaviour of the system and determine its index of regular variation, while the light-tailed one may well have an impact on the extremal dependence, in terms of the form of the spectral measure. It turns out that in our case a similar Breiman result, more precisely an extension of Theorem 3 in \cite{wang21} adapted to Banach space-valued processes, applies as well. 
\begin{theorem}\label{Breiman_generalised}
Let \((X_t)_{t\in\R_{\geq 0}}\) be an at least one-sidedly continuous stochastic process with values in a separable Banach space \((B, \norm{\cdot})\) and let \(T\) be a positive random variable, such that the following conditions are satisfied:
\begin{enumerate}[label=\arabic*)]
\item \(T\) is regularly varying with index \(\alpha>0\), i.e. for some scaling function \(b(t)\) we have
	\[t\cdot\Pp\Bigl(\frac{T}{b(t)}>\lambda\Bigr)\overset{t\to\infty}{\to}\lambda^{-\alpha}=\nu_\alpha((\lambda,\infty)) \quad \forall \lambda>0.\]
\item \((X_t)\) and \(T\) are independent, i.e. \(\sigma(T)\indep\sigma(X_t,t\geq 0)\).
\item\label{Brei_generalised:conv} \(X_t\) converges  to some \(X_\infty\in B^\ast\) almost surely as \(t \to \infty\).
\item\label{Brei_generalised:moment} For some \(\alpha'>\alpha\) the following moment condition holds
\[\sup_{t\geq 0}\E\big(\norm{X_t}^{\alpha'}\big)<\infty.\]
\end{enumerate}
Then \(T\cdot X_T\) is multivariate regularly varying in \(B^\ast\) with index \(\alpha\) and
\begin{equation}
t\Pp\Bigl(\frac{T\cdot X_T}{b(t)}\in\cdot\Bigr)\overset{t\to\infty}{\to}[\nu_\alpha \otimes \Pp(X_\infty\in\cdot)]\circ \tilde{h}^{-1}\, \text{ in }\M(B^\ast),\label{eq:Breiman_generalised}
\end{equation}
where \(\tilde{h}:\R_{>0}\times B\to B,\,\tilde{h}(x,y)=x\cdot y\).
\end{theorem}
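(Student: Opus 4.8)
### Proof Strategy for Theorem~\ref{Breiman_generalised}

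The plan is to reduce the statement to testing against functions $f \in \Cscr(B^\ast)$ and to split the contribution according to whether $T$ is large or small, in the classical Breiman spirit, but carried out at the level of $\M$-convergence. Fix $f \in \Cscr(B^\ast)$, so that $f$ is bounded by some $M$, vanishes on $B_\epsilon(0)$, and is continuous. We must show $t\,\E\big[f(T X_T / b(t))\big] \to \big([\nu_\alpha \otimes \Pp(X_\infty \in \cdot)] \circ \tilde h^{-1}\big)(f)$ as $t \to \infty$. The right-hand side, after unfolding the polar-type map $\tilde h(x,y) = xy$, equals $\E\big[\int_0^\infty f(u X_\infty)\,\alpha u^{-\alpha-1}\,\dint u\big]$ (using independence of the limit law of $X_\infty$ from the $\nu_\alpha$-coordinate), which is finite by condition~\ref{Brei_generalised:moment} together with the vanishing of $f$ near $0$: indeed $\int_0^\infty f(u x)\,\alpha u^{-\alpha-1}\dint u \le M \int_{\epsilon/\norm{x}}^\infty \alpha u^{-\alpha-1}\dint u = M (\epsilon/\norm{x})^{-\alpha}$, and $\E \norm{X_\infty}^\alpha < \infty$ follows from Fatou and \ref{Brei_generalised:moment}.

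The core computation conditions on $X = (X_t)_{t \ge 0}$, which is legitimate by the independence assumption~2). Write $g_t(\lambda) := \E\big[f(\lambda X_\lambda / b(t))\big]$ informally — more precisely, for a fixed realisation of the process, consider $\E_T\big[f(T X_T/b(t)) \mid X\big] = \int f\big(\lambda X_\lambda / b(t)\big)\,\Pp_T(T \in \dint\lambda)$. After the scaling substitution this should be compared against $\int_0^\infty f(u X_\infty)\,\alpha u^{-\alpha-1}\,\dint u$. The two ingredients that make the comparison work are: (i) regular variation of $T$, i.e.\ $t\,\Pp(T/b(t) > \lambda) \to \lambda^{-\alpha}$, which by a Potter-type bound gives, for $t$ large, $t\,\Pp(T/b(t) > \lambda) \le c(\lambda^{-\alpha+\delta} \vee \lambda^{-\alpha-\delta})$ uniformly; and (ii) the almost-sure convergence $X_\lambda \to X_\infty$ from~\ref{Brei_generalised:conv} together with one-sided continuity of the paths, which ensures that for $\Pp$-a.e.\ $\omega$ the map $\lambda \mapsto f(u X_\lambda(\omega))$ converges to $f(u X_\infty(\omega))$ for every $u$ as $\lambda \to \infty$, using continuity of $f$. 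Then one argues: for the bulk where $T/b(t)$ is of order $\ge 1$, the corresponding $X_T$ is already close to $X_\infty$ (since $T \to \infty$ on this event as $t \to \infty$), so the integrand is well-approximated; for the small-$T$ part, $T X_T / b(t)$ has small norm and $f$ vanishes there once $t$ is large enough — here one needs a deterministic or moment bound on $\norm{X_\lambda}$ for $\lambda$ in a bounded range, which is where~\ref{Brei_generalised:moment} enters again to control $\Pp(\norm{X_\lambda} \text{ large})$ uniformly.

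The dominating function for the dominated-convergence argument (applied to the outer expectation over $X$) is the crux: one needs $t\,\E_T\big[f(T X_T / b(t)) \mid X\big] \le \Phi(X)$ for a $\Pp$-integrable $\Phi$, uniformly in large $t$. The natural candidate, using $f \le M \1\{\norm{\cdot} > \epsilon\}$ and the Potter bound, is $\Phi(X) = c\,M\,\sup_{\lambda \ge \lambda_0}\big((\epsilon/\norm{X_\lambda})^{-\alpha+\delta} \vee (\epsilon/\norm{X_\lambda})^{-\alpha-\delta}\big)$ plus a bounded-range correction term; integrability of this follows from~\ref{Brei_generalised:moment} with $\alpha' > \alpha$ by choosing $\delta$ small, provided one can also bound $\E\big[\sup_{\lambda}\norm{X_\lambda}^{\alpha'}\big]$ — and here lies the main obstacle: condition~\ref{Brei_generalised:moment} only gives $\sup_\lambda \E\norm{X_\lambda}^{\alpha'} < \infty$, not $\E\sup_\lambda \norm{X_\lambda}^{\alpha'}$. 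I expect this to be circumvented by \emph{not} conditioning on the whole path but only on $X_\infty$ (or by a careful two-step splitting: first handle the event $\{T > b(t)\delta\}$ where $X_T$ is forced close to $X_\infty$ and the moment bound on $X_\infty$ alone suffices, then show the complementary event contributes $o(1/t)$ directly via Markov's inequality and condition~\ref{Brei_generalised:moment} applied at fixed scales). Once the uniform integrable domination is secured, dominated convergence delivers the limit, and an appeal to Theorem~\ref{th:chara_MRV}\ref{th:chara_MRV_1} identifies the result as multivariate regular variation with index $\alpha$ and the stated limit measure.
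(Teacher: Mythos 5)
Your overall architecture is right — reduce to testing against bounded continuous $f$ vanishing near $0$, split by the size of $T/b(t)$, handle the small-$T$ range by Markov plus a Karamata/Potter moment estimate, and verify finiteness of the right-hand side via Fatou — and you correctly identify the crux: condition~\ref{Brei_generalised:moment} gives $\sup_t \E\norm{X_t}^{\alpha'}<\infty$ but not $\E\sup_t\norm{X_t}^{\alpha'}<\infty$, so a naive dominated-convergence argument after conditioning on the whole path $X$ is unavailable. But at exactly that point the proposal stops being a proof and becomes a wish list (``I expect this to be circumvented by\ldots'', ``or by a careful two-step splitting\ldots''). Neither workaround is carried out, and the one you gesture at — ``handle $\{T>b(t)\delta\}$ where $X_T$ is forced close to $X_\infty$'' — is precisely where the difficulty lives: $X_t\to X_\infty$ is only almost sure, so on a fixed event $\{T/b(t)\ge\eta\}$ the quantity $\norm{X_T-X_\infty}$ is not uniformly small across $\omega$, and one cannot directly substitute $X_\infty$ for $X_T$ inside $f$ without further justification.

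The missing ingredient is Egorov's theorem. The paper's proof fixes $\epsilon>0$, finds a measurable set $A_\epsilon$ with $\Pp(A_\epsilon^c)<\epsilon$ on which $X_t\to X_\infty$ uniformly, and decomposes the bulk term $I^1(t,\eta,M)$ over $A_\epsilon$ and $A_\epsilon^c$: on $A_\epsilon^c$ the contribution is $O(\epsilon)$ because $A_\epsilon^c\in\sigma(X_t,t\ge 0)$ is independent of $T$; on $A_\epsilon$ the uniform convergence combined with uniform continuity of $f$ makes $\bigl|f(X_T T/b(t))-f(X_\infty T/b(t))\bigr|$ uniformly small; and the remaining term with $X_\infty$ in place of $X_T$ is handled by the regular variation of $T$ alone, with no path-supremum moment needed. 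This is also where the one-sided path continuity (which you do not use) actually enters: it guarantees that $A_\epsilon$ is a measurable event, a point the paper flags explicitly. Without Egorov or an equivalent localisation, your ``bulk'' step has a genuine gap; once you add it, the rest of your sketch lines up with the paper's proof (modulo the cosmetic difference of Potter bounds versus Karamata's theorem for the small-$T$ tail, either of which works).
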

The proof of this theorem is deferred to Section~\ref{Sec:BreimanProof}.

Using this we can now state and prove our findings:
\begin{theorem}\label{th:mrvparg}
Consider a preferential attachment random graph with the notation as introduced in the last sections. In particular denote with \(l\) and \(\beta\) the number of edges added with each new vertex and the weight functions offset respectively. For fixed \(n,r\in\N\) let \(D^r(n):=(D_1(n),\dots,D_r(n))\) be the corresponding weight vector and let \(N\) be a positive integer-valued random variable. If the following assumptions are satisfied
\begin{enumerate}[label=\arabic*)]
\item\label{th:mrvparg:independence}
\(N\) and \((D^r(n))_{n \in \N}\) are independent and
\item\label{th:mrvparg:rv}
\(N\) is regularly varying with index \(\alpha>0\)
\end{enumerate}
then \(D^r(N)\) is multivariate regularly varying with index \(\alpha\cdot \frac{l+\beta}{l}\) for all \(r \in \N\).
\end{theorem}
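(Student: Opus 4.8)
The plan is to realise $D^r(N)$ as a product $T\cdot X_T$ and to apply the generalised Breiman theorem (Theorem~\ref{Breiman_generalised}) in the Banach space $B=\R^r$ equipped with, say, the $\ell_1$-norm, the precise choice of norm being irrelevant in finite dimension. For the heavy-tailed factor I would take
\[
T:=N^{l/(l+\beta)},
\]
which satisfies $T\geq 1$ because $N\geq 1$, and which, by the power-transformation rule for regularly varying random variables applied to assumption~\ref{th:mrvparg:rv}, is regularly varying with index $\alpha\cdot\frac{l+\beta}{l}$ (with scaling function $b_T(t)=b_N(t)^{l/(l+\beta)}$). For the remaining factor I would extend the weight vector to real times right-continuously, $D^r(u):=D^r(\floor{u})$, and set, for $t\geq 0$,
\[
X_t:=\frac{1}{\max(t,1)}\,D^r\bigl(t^{(l+\beta)/l}\bigr).
\]
Then $t\mapsto X_t$ is right-continuous, and, since $T\geq 1$ and $\floor{(N^{l/(l+\beta)})^{(l+\beta)/l}}=N$, one has $X_T=D^r(N)/N^{l/(l+\beta)}$, hence $T\cdot X_T=D^r(N)$ \emph{exactly}.

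It then remains to verify the four hypotheses of Theorem~\ref{Breiman_generalised}. Hypothesis~1 is the regular variation of $T$ just established. Hypothesis~2 holds because $(X_t)_{t\geq0}$ is a measurable functional of $(D^r(n))_{n\in\N}$ while $T$ is a function of $N$, and these are independent by assumption~\ref{th:mrvparg:independence}. For hypothesis~\ref{Brei_generalised:conv}, Proposition~\ref{P:martingale_p_factorial}\,\ref{P:martingale_p_factorial:asConv} gives $n^{-l/(l+\beta)}D^r(n)\to\zeta:=(\zeta_1,\dots,\zeta_r)$ almost surely, and since $\floor{t^{(l+\beta)/l}}^{\,l/(l+\beta)}\sim t$ as $t\to\infty$ this yields $X_t\to\zeta$ almost surely; the additional requirement $\zeta\in\R^r\setminus\{0\}$ a.s.\ is treated below. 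For hypothesis~\ref{Brei_generalised:moment}, fix an integer $p>\alpha\frac{l+\beta}{l}$: expanding $(\sum_{i=1}^r D_i(n))^p$ multinomially, bounding $D_i(n)^{k_i}\leq k_i!\binom{D_i(n)+k_i-1}{k_i}$, and noting that for each multi-index with $\sum_i k_i=p$ the expectation $\E\bigl[c(n,p)^{-1}\prod_i\binom{D_i(n)+k_i-1}{k_i}\bigr]$ is independent of $n$ by the martingale property in Proposition~\ref{P:martingale_p_factorial}\,\ref{P:martingale_p_factorial:martingale}, one obtains $\E\bigl[(\sum_{i=1}^r D_i(n))^p\bigr]\leq C_p\,c(n,p)\sim C_p'\,n^{pl/(l+\beta)}$; since $\floor{t^{(l+\beta)/l}}\leq t^{(l+\beta)/l}$, this makes $\E(\norm{X_t}_1^p)$ uniformly bounded in $t$ (over large $t$ by the last bound, over a bounded range of $t$ because there $D^r(\floor{t^{(l+\beta)/l}})$ is deterministically bounded by the elementary estimate $\norm{D^r(n)}_1\leq\norm{D^r(0)}_1+n(l+\beta)$). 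Theorem~\ref{Breiman_generalised} then yields that $D^r(N)=T\cdot X_T$ is multivariate regularly varying in $(\R^r)^\ast$ with index $\alpha\frac{l+\beta}{l}$, with the bonus that its limit measure equals $[\nu_{\alpha(l+\beta)/l}\otimes\Pp(\zeta\in\cdot)]\circ\tilde{h}^{-1}$ --- the form needed to identify the spectral measure afterwards.

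The delicate point, and the one I expect to be the main obstacle, is the non-degeneracy $\Pp(\zeta=0)=0$ demanded in hypothesis~\ref{Brei_generalised:conv}: one must exclude that all rescaled limiting weights vanish simultaneously. I would prove the stronger statement $\Pp(\zeta_i>0)=1$ for each $i$. Specialising Proposition~\ref{P:martingale_p_factorial} to a single exponent $k_i=k$ (and $k_j=0$ for $j\neq i$) gives, for every real $k>0$,
\[
\frac{\E(\zeta_i^{k})}{\Gamma(k+1)}=\frac{1}{c(n_0,k)}\binom{D_i(n_0)+k-1}{k},
\]
where $n_0=(i-s)\vee 0$ and $D_i(n_0)\in\{D_i(0),\beta\}$ is deterministic and strictly positive (here $\beta>0$ is used). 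Letting $k\downarrow 0$, the right-hand side tends to $1$ (by the explicit Gamma-function form of $c$, with $c(n,0)=1$), while on the left $\zeta_i^{k}/\Gamma(k+1)\to\1_{\{\zeta_i>0\}}$ pointwise and, for $0<k\leq 1$, is dominated by $2(1+\zeta_i)\in L^1$ (Proposition~\ref{P:martingale_p_factorial}\,\ref{P:martingale_p_factorial:asConv}); dominated convergence then gives $\Pp(\zeta_i>0)=\lim_{k\downarrow 0}\E(\zeta_i^{k}/\Gamma(k+1))=1$. Hence $\zeta\in(\R^r)^\ast$ almost surely, and the proof is complete; everything else above is routine bookkeeping.
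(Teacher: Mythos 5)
Your proof is correct and uses the same overall strategy as the paper: the decomposition $D^r(N)=T\cdot X_T$ with $T=N^{l/(l+\beta)}$ and $X_t$ a rescaled version of $D^r$, followed by an application of Theorem~\ref{Breiman_generalised}. The paper uses $\lceil t^{(l+\beta)/l}\rceil$ (left-continuous) where you use $\lfloor t^{(l+\beta)/l}\rfloor$ (right-continuous), which is an inconsequential choice since the theorem only needs one-sided continuity and both give $T\cdot X_T=D^r(N)$ exactly when $N$ is integer-valued. Two of your steps differ from the paper in substance and are worth flagging. First, for the moment condition~\ref{Brei_generalised:moment} the paper invokes the separate urn-model estimate Lemma~\ref{L:MomentCond}, whereas you obtain $\E\bigl[\norm{D^r(n)}_1^p\bigr]=O(n^{pl/(l+\beta)})$ directly by a multinomial expansion and the fact that the martingale in Proposition~\ref{P:martingale_p_factorial}\,\ref{P:martingale_p_factorial:martingale} has constant expectation; this is a valid and somewhat more self-contained route, avoiding the auxiliary urn computation. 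Second, you explicitly verify the non-degeneracy requirement $\zeta\in(\R^r)^\ast$ a.s.\ built into hypothesis~\ref{Brei_generalised:conv} of Theorem~\ref{Breiman_generalised}, by letting $k\downarrow 0$ in the closed-martingale moment formula to conclude $\Pp(\zeta_i>0)=1$ for each $i$. The paper's proof of Theorem~\ref{th:mrvparg} does not address this point at all (one can recover it a posteriori from the beta-distribution structure in Lemma~\ref{L:Hilfs_Beta}, but that appears later), so your argument fills a genuine, if small, gap in the exposition; the domination by $2(1+\zeta_i)\in L^1$ and the pointwise limit $\zeta_i^k/\Gamma(k+1)\to\1_{\{\zeta_i>0\}}$ are both checked correctly.
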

\begin{proof}
In order to apply Theorem~\ref{Breiman_generalised} consider the process \((X_t)\) 
\[X_t:=0,\,t<1\qquad X_t:=\frac{1}{t}\cdot D^r(\bigl\lceil t^{\frac{l+\beta}{l}}\bigr\rceil),\, t\geq 1 \]
and the random variable \(T\) with
\[T:=N^{\frac{l}{l+\beta}}\]
Then \(T\cdot X_T = D^r(N)\). Therefore all we need to do is to check the conditions from Theorem~\ref{Breiman_generalised} and determine the index of regular variation.
\begin{enumerate}
\item[1),\,2):]
The independence follows immediately from assumption \ref{th:mrvparg:independence}. As for the regular variation we consider the equivalent formulation of assumption \ref{th:mrvparg:rv}
\[\frac{\Pp(N>x\lambda)}{\Pp(N>x)}\to\lambda^{-\alpha},\quad x\to\infty\]
which implies
\[\frac{\Pp(T>x\lambda)}{\Pp(T>x)}=\frac{\Pp(N>x^{(l+\beta)/l}\lambda^{(l+\beta)/l})}{\Pp(N>x^{(l+\beta)/l})}\to\lambda^{-\alpha\cdot\frac{l+\beta}{l}},\quad x\to\infty\]
i.e. \(T\) is regularly varying with index \(\alpha\cdot\frac{l+\beta}{l}\).
\item[3)]
After the transformation \(t':=t^{l/(l+\beta)}\) we apply Lemma~\ref{P:martingale_p_factorial} to:
\begin{align*}
X_{t'}=\frac{1}{t^{l/(l+\beta)}}D^r(\ceil{t})=\Bigl(\frac{\ceil{t}}{t}\Bigr)^{\frac{l}{l+\beta}}\frac{1}{\ceil{t}^{l/(l+\beta)}}D^r(\ceil{t}),\quad t\geq 1
\end{align*}
and the almost sure convergence follows.

\item[4)]
Because of the definition of \((X_t)\) we will only consider \(t\geq 1\) and we set again \(t':=t^{\frac{l}{l+\beta}}\). For \(q\in\N\) we have by Lemma~\ref{GraphUrnDuality} and \textcolor{red}{the moment bounds from} Lemma~\ref{L:MomentCond} \textcolor{red}{from the Appendix} that
\begin{align*}
\E(\norm{X_{t'}}_1^q)&=\frac{1}{t^{ql/(l+\beta)}}\E([D_1(\ceil{t})+\dots+D_r(\ceil{t})]^q)\\
&=\frac{1}{t^{ql/(l+\beta)}}\E((C_1(l\cdot\ceil{t})+\dots+C_r(l\cdot\ceil{t}))^q)\\
&\leq C\cdot\frac{\ceil{t}^{ql/(l+\beta)}}{t^{ql/(l+\beta)}}\leq C\cdot 2^{\frac{ql}{l+\beta}}
\end{align*}
for some \(C>0\), which completes the verification of the conditions of Theorem~\ref{Breiman_generalised}.
\end{enumerate}
\end{proof}

For the actual in-degree vector without the weight functions offset, we obtain the following corollary.
\begin{cor}\label{Cor:DegreeRV}
Under the same assumptions as in Theorem~\ref{th:mrvparg} the vector \(D^r(N)-\beta\) is multivariate regularly varying with the same index of regular variation and the same limit measure as \(D^r(N)\). 
\end{cor}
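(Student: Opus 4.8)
The plan is to show that subtracting the fixed vector $\beta \mathbf{1} = (\beta,\dots,\beta)$ from $D^r(N)$ is an asymptotically negligible perturbation at the scale at which $D^r(N)$ becomes extreme, so that the limit measure is unaffected. The key point is that the normalising function for $D^r(N)$, call it $b(t)$, satisfies $b(t)\to\infty$ as $t\to\infty$ (since $\norm{D^r(N)}$ is regularly varying with positive index, by Remark~\ref{Rem:RVofNorm}), whereas the shift $\beta\mathbf{1}$ has constant norm. Hence, informally, $(D^r(N)-\beta\mathbf{1})/b(t)$ and $D^r(N)/b(t)$ differ by $-\beta\mathbf{1}/b(t)\to 0$.

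Concretely, I would argue as follows. By Theorem~\ref{th:mrvparg}, $D^r(N)$ is multivariate regularly varying with index $\alpha' := \alpha\cdot\frac{l+\beta}{l}$; fix the normalising function $b(t):=F^{\leftarrow}_{\norm{D^r(N)}_1}(1-t^{-1})$ as in the discussion after Theorem~\ref{th:chara_MRV}, so that $t\,\Pp(D^r(N)/b(t)\in\cdot)\to\mu$ in $\M((\R^r)^\ast)$, where $\mu$ is the limit measure. Since $b(t)\to\infty$, for any $f\in\Cscr((\R^r)^\ast)$ I would compare
\[
t\,\E\!\left[f\!\left(\frac{D^r(N)-\beta\mathbf{1}}{b(t)}\right)\right]
\quad\text{and}\quad
t\,\E\!\left[f\!\left(\frac{D^r(N)}{b(t)}\right)\right].
\]
The function $f$ is bounded and uniformly continuous on $\R^r$ (it is continuous, bounded, and vanishes on a neighbourhood of $0$, hence extends continuously to all of $\R^r$ with compact support away from $0$), so given $\varepsilon>0$ there is $\delta>0$ with $|f(x)-f(y)|<\varepsilon$ whenever $\norm{x-y}_1<\delta$. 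Choosing $t$ large enough that $\beta\cdot r / b(t) < \delta$ forces $|f((D^r(N)-\beta\mathbf{1})/b(t)) - f(D^r(N)/b(t))|<\varepsilon$ pointwise, so the two expectations above differ by at most $\varepsilon\, t\, \Pp(\text{either argument} \ne 0 \text{ relevant region})$; more carefully, since $f$ vanishes on $B_\epsilon(0)$ for some $\epsilon>0$, both expectations are bounded uniformly in $t$ by $\sup|f|\cdot t\,\Pp(\norm{D^r(N)}_1 > \tfrac{\epsilon}{2} b(t))$, which converges to $\sup|f|\cdot(\epsilon/2)^{-\alpha'}<\infty$. Combining, $\limsup_{t\to\infty}\big|t\,\E[f(\tfrac{D^r(N)-\beta\mathbf 1}{b(t)})] - \mu(f)\big| \le C\varepsilon$ for a constant $C$ independent of $\varepsilon$, and letting $\varepsilon\downarrow 0$ yields $t\,\Pp((D^r(N)-\beta\mathbf 1)/b(t)\in\cdot)\to\mu$ in $\M((\R^r)^\ast)$. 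By characterisation \ref{th:chara_MRV_1} of Theorem~\ref{th:chara_MRV}, together with the fact that $b$ has the same regular-variation index $1/\alpha'$, this is exactly the statement that $D^r(N)-\beta\mathbf 1$ is multivariate regularly varying with index $\alpha'$ and limit measure $\mu$; normalising by $\Pp(\norm{D^r(N)}_1>t)$ then gives the limit measure in the form of Definition~\ref{D:MRV}, identical to that of $D^r(N)$.

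The only mild subtlety — and the step I would be most careful about — is the bookkeeping near the boundary $\{x : \norm{x}_1 = \epsilon\}$ where $f$ starts to be nonzero: one must ensure the shift cannot push mass that was genuinely in the "bulk" (where $f=0$) out to where $f\ne 0$ in a way that survives the $t\to\infty$ scaling. This is handled by the uniform bound $t\,\Pp(\norm{D^r(N)}_1 > c\, b(t)) \to c^{-\alpha'}$ for every fixed $c>0$, which controls the total mass in any region bounded away from $0$ uniformly in $t$; since the shift has norm $O(1) = o(b(t))$, it can only move points across a boundary at level $\epsilon b(t)$ if they were already within $O(1)$ of that boundary, a region of asymptotically the same (finite) $\mu$-mass, and the uniform continuity estimate then kills the contribution. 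Everything else is the routine $\M$-convergence manipulation sketched above. Alternatively, one could phrase the whole argument as a one-line appeal to the standard fact that adding an $o(b(t))$ deterministic (or even tight random) perturbation does not change the limit measure of a regularly varying sequence — this is essentially a converging-together / Slutsky argument in the $\M$-convergence topology — but I would include the short direct proof above for completeness.
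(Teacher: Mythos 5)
Your proof is correct and self-contained. The paper itself disposes of this corollary in one line by citing Lemma~3.12 of \cite{mikosch06}, which is exactly the ``standard fact'' you mention in your closing paragraph: adding a fixed (or more generally an $o(b(t))$) vector to a regularly varying random vector leaves the index and limit measure unchanged. What you have written out is essentially a direct proof of that lemma in the present special case, via the $\M$-convergence Portmanteau test functions $f\in\Cscr(B^\ast)$ and the uniform-continuity / ``vanishing near zero'' structure of such $f$. The bookkeeping is right: you correctly identify that the only issue is mass near the annulus $\{\|x\|_1\approx\epsilon\}$, and you control it with the uniform bound $t\,\Pp(\|D^r(N)\|_1 > c\,b(t))\to c^{-\alpha'}$, so the final error is $O(\varepsilon)$ and vanishes. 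The trade-off is transparency versus brevity: the paper's citation is shorter and leans on an established reference, while your argument makes the mechanism visible and does not depend on the reader tracking down \cite{mikosch06}; both are valid, and you have in effect reconstructed the proof of the cited lemma.
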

\begin{proof}
This is a consequence of Lemma~3.12 from \cite{mikosch06}.
\end{proof}

\subsection{Characterisation of the Spectral Measure}\label{sec:SpecMeas}
Having verified the multivariate regular variation of the vector \(D^r(N)\), the natural next step is to characterise its spectral measure \(S\). Over the next few lemmata we shall once more switch into the urn perspective for this purpose. Because of the close relation to P{\'o}lya urns the following Beta and Beta-related distributions will come up, which we repeat here for convenience and since there exist different parameterisations in the literature. 
\begin{definition}\(\)
\begin{enumerate}[label=\arabic*)]
\item A random variable on \([0,1]\) is said to follow a beta distribution \(\betavt{a}{b}\), with parameters \(a,b>0\) if it has a density function
\[f(x)=\frac{1}{\text{B}(a,b)}x^{a-1}(1-x)^{b-1}\cdot \1_{[0,1]}(x),\]
where \(\text{B}(a,b)=\frac{\Gamma(a)\Gamma(b)}{\Gamma(a+b)}\) is the beta function.
\item 
A random vector with values in the standard ($r$-1)-simplex in \(\R^r_{\geq 0}\), i.e.\ the set \(\Sigma^{r-1}:=\{x\textcolor{red}{=(x_1,\dots,x_r)}\in\R^r_{\geq 0}\,:\,x_1+\dots+x_r=1\}\), is said to follow a Dirichlet distribution \(\dirvt{a_1,\dots,a_r}\) with parameters \(a_1,\dots,a_r>0\) if it has a density function
\begin{align*}
f(x)=\frac{1}{\text{B}(a_1,\dots,a_r)}\prod_{i=1}^{r}x_i^{a_i-1}\cdot \1_{\Sigma^{r-1}}(x),
\end{align*}
where \(\text{B}(a_1,\dots,a_r):=\frac{\Gamma(a_1)\cdots\Gamma(a_r)}{\Gamma(a_1+\dots+a_r)}\) is the multivariate beta function.
\item A random vector with values in the standard ($r$-1)-simplex in \(\R_{\geq 0}^r\) is said to follow a generalised Dirichlet distribution \(\gdirvt{a_1,b_1,\dots,a_{r-1},b_{r-1}}\) with parameters \(a_1,b_1,\dots,a_{r-1},b_{r-1}>0\) if it has a density function:
\begin{align*}
f(x)=\frac{1}{\prod_{i=1}^{r-1}\text{B}(a_i,b_i)}x_r^{b_{r-1}-1}\prod_{i=1}^{r-1}\Bigl(x_i^{a_i-1}\Bigl(\sum_{j=i}^rx_j\Bigr)^{b_{i-1}-(a_i+b_i)}\Bigr)\cdot \1_{\Sigma^{r-1}}(x),
\end{align*}
where \(b_0\) is arbitrary.
\end{enumerate}
\end{definition}

\begin{remark} \label{Rem:ProofSketch_Dir}
The generalised Dirichlet distribution boils down to the standard version if its parameters satisfy \(b_{i-1}=a_i+b_i\). Its density function then depends only on the parameters \(a_1,\dots,a_{r-1}\) and \(b_{r-1}\) which can be viewed as the parameters \(a_1,\dots,a_r\) of the standard Dirichlet distribution. These two distributions are also connected to the beta distribution through a stick breaking experiment, in which a random vector is constructed by breaking independent, beta distributed fractions of a unit length stick for a fixed number of times, and the lengths of the resulting pieces form the components of the random vector. In general, this approach leads to a generalised Dirichlet distribution with parameters equal to those of the beta distributed fractions in the order in which they were broken off (see \cite{mosimann69}). To obtain a standard Dirichlet vector, the beta distributions must satisfy the parameter constraints mentioned above.
\end{remark}

We begin with a well-known result for traditional multi-colour P{\'o}lya urns, i.e. our urn model but without immigration. The proof can be found in \cite{blackwell73}.
\begin{lemma}\label{L:Polya_Dir}
Consider an urn with balls of \(r\in\N\) different colours and starting amounts \(a^r=(a_1,\dots,a_r)\in\R^r_{>0}\). Let \((X(n))_{n\in\N}\) be the sequence of colours drawn in each time step by following a traditional multi-colour P{\'o}lya urn scheme where we add one ball of the drawn colour in each step. Then, for each colour \(i\), the proportions of ball counts
\[\frac{a_i+\sum_{k=1}^n\1_{\{i\}}(X(k))}{\sum_{j=1}^ra_j+n},\quad i=1,\dots,r\]
converge, for $n \to \infty$, almost surely to some random variable \(Y_i\) such that:
\[Y^r:=(Y_1,\dots,Y_r)\dequ\dirvt{a_1,\dots,a_r}.\]
\end{lemma}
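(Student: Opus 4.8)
Write $S_i(n):=\sum_{k=1}^n\1_{\{i\}}(X(k))$ for the number of draws of colour $i$ among the first $n$ steps, put $A:=\sum_{j=1}^r a_j$, and let $M_i(n):=\frac{a_i+S_i(n)}{A+n}$ denote the proportion in question. Since $\sum_{i=1}^r(a_i+S_i(n))=A+n$ at every time, the vector $M^r(n):=(M_1(n),\dots,M_r(n))$ always lies in $\Sigma^{r-1}$. The plan is: (i) show that each $M_i(n)$ is a bounded martingale, so that $M^r(n)$ converges almost surely to some $\Sigma^{r-1}$-valued limit $Y^r$; (ii) compute all mixed moments of $Y^r$ by means of a second, P\'olya-type family of martingales; (iii) recognise those moments as Dirichlet moments and conclude via moment-determinacy on the compact simplex.

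For (i), work with $\Fscr_n:=\sigma(X(1),\dots,X(n))$. Since $\Pp(X(n+1)=i\mid\Fscr_n)=M_i(n)$, a one-line computation gives $\E[a_i+S_i(n+1)\mid\Fscr_n]=(a_i+S_i(n))\tfrac{A+n+1}{A+n}$ and hence $\E[M_i(n+1)\mid\Fscr_n]=M_i(n)$; as $0\le M_i(n)\le 1$ this is a bounded martingale, so $M_i(n)$ converges a.s.\ (and in every $L^p$) to a limit $Y_i$, and $M^r(n)\to Y^r:=(Y_1,\dots,Y_r)\in\Sigma^{r-1}$ almost surely.

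For (ii), fix $k_1,\dots,k_r\in\N_0$, set $k:=\sum_i k_i$, and consider
\[
Z_n:=\frac{\prod_{i=1}^r\prod_{j=0}^{k_i-1}\bigl(a_i+S_i(n)+j\bigr)}{\prod_{j=0}^{k-1}(A+n+j)}.
\]
Exactly as in the single-colour computation underlying Proposition~\ref{P:martingale_p_factorial}, but now \emph{without} immigration, one checks that $(Z_n,\Fscr_n)_n$ is a martingale: drawing colour $m$ multiplies the numerator by $\frac{a_m+S_m(n)+k_m}{a_m+S_m(n)}$, so averaging over $m$ with weights $M_m(n)=\frac{a_m+S_m(n)}{A+n}$ scales the numerator in conditional expectation by $\frac{A+n+k}{A+n}$, which is precisely the growth factor of the denominator. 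Moreover $Z_n$ is bounded, since numerator and denominator are each $\Theta(n^k)$, so $Z_n\to\prod_{i=1}^r Y_i^{k_i}$ almost surely; combining bounded convergence with $\E[Z_n]=Z_0$ then yields
\[
\E\Bigl[\prod_{i=1}^r Y_i^{k_i}\Bigr]=Z_0=\frac{\prod_{i=1}^r\prod_{j=0}^{k_i-1}(a_i+j)}{\prod_{j=0}^{k-1}(A+j)}=\frac{\Gamma(A)}{\prod_{i=1}^r\Gamma(a_i)}\cdot\frac{\prod_{i=1}^r\Gamma(a_i+k_i)}{\Gamma(A+k)}.
\]
The right-hand side is exactly the mixed moment $\E\bigl[\prod_i W_i^{k_i}\bigr]$ of a vector $W^r\dequ\dirvt{a_1,\dots,a_r}$. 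Since $\Sigma^{r-1}$ is compact, the Stone--Weierstrass theorem makes polynomials dense in $C(\Sigma^{r-1})$, so a probability law on the simplex is determined by its moments; hence $Y^r\dequ\dirvt{a_1,\dots,a_r}$, which together with (i) is the assertion.

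The routine parts are the two martingale identities and the a.s.\ convergence of bounded martingales. The point requiring care is step (ii)--(iii): one must verify that $Z_n$ is genuinely (uniformly in $n$) bounded so that one may pass to the limit in $\E[Z_n]=Z_0$, and then recall moment-determinacy on a compact set. As an alternative to the $Z_n$-martingale one may note that $(X(n))_n$ is exchangeable, invoke de Finetti's theorem to write its law as a mixture of i.i.d.\ categorical laws, and read the moments of the mixing measure off the closed-form probability $\tfrac{\prod_i\prod_{j=0}^{n_i-1}(a_i+j)}{\prod_{j=0}^{n-1}(A+j)}$ of any colour string containing $n_i$ draws of colour $i$; the strong law of large numbers applied conditionally then also gives the a.s.\ convergence in (i).
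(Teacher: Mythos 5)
Your proof is correct and complete. Note, however, that the paper does not supply its own proof of this lemma; it simply cites Blackwell and MacQueen (1973), whose classical treatment hinges on the exchangeability of the colour sequence and de Finetti's theorem (the route you also sketch at the end). What you give instead is a self-contained martingale argument: $M_i(n)$ is a bounded martingale (hence a.s.\ convergent), the rising-factorial ratio $Z_n$ is a second bounded martingale whose a.s.\ limit is $\prod_i Y_i^{k_i}$, so $\E[\prod_i Y_i^{k_i}]=Z_0$ recovers the Dirichlet mixed moments, and moment-determinacy on the compact simplex closes the argument. This is, in spirit, exactly the technique the paper itself uses for Proposition~\ref{P:martingale_p_factorial} (a product-form martingale built from generalised binomial coefficients and closed on the right), so your proof actually makes the exposition more uniform than the citation does. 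The only small thing worth making explicit is the uniform bound on $Z_n$: since $a_i+S_i(n)+j\le A+n+k-1$ for all relevant $i,j$ and the denominator factors are at least $A+n$, one has $Z_n\le\bigl(\tfrac{A+k-1}{A}\bigr)^k$ for every $n$, which justifies the passage from $\E[Z_n]=Z_0$ to the limit by bounded convergence. Both approaches are standard and correct; the de Finetti route is shorter if one is willing to invoke that theorem, while your martingale route is more elementary and dovetails with the paper's other arguments.
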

\begin{notation}\label{not:Xr}
In order to describe the spectral measure it is convenient to look at the projection of the limiting vector from Proposition~\ref{P:martingale_p_factorial}, 2), i.e.
$$ \zeta^r:=(\zeta_1, \ldots, \zeta_r)=(\lim_{n\to\infty}n^{-l/(l+\beta)}D_i(n))_{1 \leq i \leq r},$$ on the \(\ell_1\)-unit sphere along with some derived proportions. We are specifically interested in two orderings of this vector: forwards
\begin{align*}
S^r&:=(S_1, \ldots, S_r):=(\zeta_1,\dots,\zeta_r)/(\zeta_1+\dots+\zeta_r), \; \mbox{with}\\
B_k&:=\frac{S_k}{S_k+\dots+S_r}=\frac{\zeta_k}{\zeta_k+\dots+\zeta_r},\quad 1\leq k \leq r\\
\intertext{and backwards}
S^r_{\downarrow}&=(S_r,\dots,S_1),\,\mbox{ with } B_k^{\downarrow}:=\frac{S_k}{S_1+\dots+S_k}=\frac{\zeta_k}{\zeta_1+\dots+\zeta_k},\quad 1\leq k \leq r.
\end{align*}
\end{notation}
In order to apply Lemma~\ref{L:Polya_Dir} to our problem one needs to observe that once all colours we want to consider are present in the urn (which is at time \(n_0^r=l(r-s) \vee 0\)), these colours, considered separately from the rest of the urn, will behave just like they would in a traditional multi-colour P{\'o}lya urn. We thus obtain the following mixture distribution for \(S^r\).

\begin{theorem}\label{L:Spectral_distr}
Let \(r\in\N\), \(C^r(n):=(C_1(n),\dots,C_r(n))\), $n \in \mathbb{N},$ be the vector of the first $r$ colours in the urn model after $n$ steps with $s$ initial colours from Lemma~\ref{GraphUrnDuality} and let \(S^r\) be as in Notation \ref{not:Xr} the $\ell_1$-projection of the limiting vector in the corresponding random graph model. 
 \begin{enumerate}[label=\arabic*)]
\item \label{L:Spectral_distr_1} The limiting vector \(S^r\) follows a mixture of Dirichlet distributions, i.e.
\[S^r\dequ\,\sum_{\mathclap{c\in\R^r_{\geq 0}}}\,\Pp(C^r(n_0^r)=c)\cdot\dirvt{c_1,\dots,c_r}.\]
\item \label{L:Spectral_distr_2} For the vector obtained by inverting the order of components in \(S^r\) we additionally have
\[S^r_{\downarrow}\dequ \gdirvt{a_r,b_r,\dots,a_2,b_2}\]
where 
$$(a_k,b_k)=\begin{cases}(C_k(0), \sum_{i=1}^{k-1}C_i(0)) \quad&\text{if } 2 \leq k\leq s\\ (\beta,\sum_{i=1}^{s}C_i(0)+(l+\beta)(k-1-s)+l)\quad &\text{if } k>s\end{cases}, $$
\end{enumerate}
\end{theorem}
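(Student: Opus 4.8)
The plan is to prove both parts by reducing everything, via Lemma~\ref{GraphUrnDuality}, to the urn picture and then exploiting that once all $r$ colours of interest are present — i.e.\ from time $n_0^r = l(r-s)\vee 0$ onwards — the sub-urn consisting of those colours evolves exactly like a traditional multi-colour P\'olya urn. Indeed, at each draw the conditional probability of picking colour $i\le r$, given that \emph{some} colour among $1,\dots,r$ is picked, depends only on the current counts $C_1,\dots,C_r$ (the immigration only ever adds colours with index $>r$, so it never touches this sub-urn), and by Lemma~\ref{L:InftyDegree} infinitely many draws land in the sub-urn almost surely. Hence, conditioning on $\Fscr_{n_0^r \cdot l}$, i.e.\ on the event $\{C^r(n_0^r)=c\}$, Lemma~\ref{L:Polya_Dir} applies with starting amounts $c=(c_1,\dots,c_r)$ and gives that the limiting proportions of the sub-urn follow $\dirvt{c_1,\dots,c_r}$. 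For part~\ref{L:Spectral_distr_1} I then need to identify these limiting sub-urn proportions with $S^r$: the limiting proportion of colour $i$ within the sub-urn is $\lim_n C_i(nl)/\sum_{j\le r}C_j(nl) = \zeta_i/(\zeta_1+\dots+\zeta_r)$ by Proposition~\ref{P:martingale_p_factorial}\,2), which is exactly $S_i$. Averaging over the distribution of $C^r(n_0^r)$ yields the stated mixture.

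For part~\ref{L:Spectral_distr_2} the plan is to pass through the stick-breaking characterisation recalled in Remark~\ref{Rem:ProofSketch_Dir}. The key is that the backward ratios $B_k^\downarrow = \zeta_k/(\zeta_1+\dots+\zeta_k)$, $k=r,r-1,\dots,2$, are the successive stick-breaking fractions that reconstruct $S^r_\downarrow=(S_r,\dots,S_1)$: peeling off $S_r$ as a fraction $1-B_r^\downarrow \cdot(\dots)$ — more precisely, $S_r = 1 - (S_1+\dots+S_{r-1})$ etc. — shows that the vector $(S_r,\dots,S_1)$ is exactly what one obtains by breaking sticks of fractions $B_r^\downarrow, B_{r-1}^\downarrow,\dots,B_2^\downarrow$ in that order (with the last piece being the leftover). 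By Mosimann's result (\cite{mosimann69}, as cited in Remark~\ref{Rem:ProofSketch_Dir}) it therefore suffices to show that $B_r^\downarrow, B_{r-1}^\downarrow, \dots, B_2^\downarrow$ are \emph{independent}, with $B_k^\downarrow \sim \betavt{a_k}{b_k}$ for the claimed $(a_k,b_k)$. Independence is the structural heart: in the P\'olya urn, the colour-$k$-vs-$\{1,\dots,k-1\}$ competition and the $\{1,\dots,k-1\}$-internal competition are driven by disjoint subsequences of draws, and the classical fact that nested P\'olya sub-urns produce independent beta-distributed splitting fractions gives the claim (one may also argue directly from the fact that a $\dirvt{c_1,\dots,c_r}$ vector has independent stick-breaking fractions $\betavt{c_k}{c_1+\dots+c_{k-1}}$, then handle the mixture over $c$).

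The remaining, more computational, step is to pin down the parameters $(a_k,b_k)$. For $2\le k\le s$ the colour $k$ is present from time $0$ with $C_k(0)$ balls and the colours $1,\dots,k-1$ start with $\sum_{i=1}^{k-1}C_i(0)$ balls, giving $\betavt{C_k(0)}{\sum_{i=1}^{k-1}C_i(0)}$. For $k>s$, colour $k$ is an immigrant introduced at urn-time $l(k-1-s)+$ (the offset so that $N(0)+(k-1-s)$ new vertices have arrived before it), starting with $\beta$ balls, while colours $1,\dots,k-1$ at that moment have total ball count $\sum_{i=1}^s C_i(0)$ (the initial balls) plus the balls added by all prior draws and immigrations, which bookkeeping shows equals $\sum_{i=1}^s C_i(0) + (l+\beta)(k-1-s) + l$; again nested-urn arguments reduce the count of colour $k$ against the aggregate of $\{1,\dots,k-1\}$ to a two-colour P\'olya urn, now with immigration affecting only colours $>k$, hence not this competition, so the limit is $\betavt{\beta}{\sum_{i=1}^s C_i(0)+(l+\beta)(k-1-s)+l}$. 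I expect the main obstacle to be making the independence of the backward ratios fully rigorous in the presence of immigration — one has to argue carefully that the immigration steps, which interleave with the draws, do not destroy the martingale/exchangeability structure that yields independence; conditioning on $\Fscr_{n_0^r\cdot l}$ and invoking Lemma~\ref{L:Polya_Dir} colour-block by colour-block, then averaging, is the cleanest route around it.
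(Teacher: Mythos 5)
Your overall plan matches the paper's: for part~\ref{L:Spectral_distr_1} you condition on the composition $C^r(n_0^r)$, observe that from time $n_0^r$ on the first $r$ colours evolve as a traditional P\'olya urn (immigration only adds colours $>r$), apply Lemma~\ref{L:Polya_Dir}, and identify the limit with $S^r$ via Proposition~\ref{P:martingale_p_factorial}. For part~\ref{L:Spectral_distr_2} you correctly reduce to the stick-breaking identity
$S_{\downarrow}^r=\bigl(B_r^{\downarrow},(1-B_r^{\downarrow})B_{r-1}^{\downarrow},\ldots,(\prod_{i=2}^r(1-B_i^{\downarrow}))B_1^{\downarrow}\bigr)$ and to the claim that the $B_k^\downarrow$ are independent with the stated beta marginals --- which is precisely what the paper isolates as Lemma~\ref{L:Hilfs_Beta} and proves in the appendix.

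The one genuine weak spot is the route you describe for the independence of the $B_k^\downarrow$. Conditioning on $\Fscr_{n_0^r\cdot l}$, applying Lemma~\ref{L:Polya_Dir}, and then averaging over the random composition $c=C^r(n_0^r)$ does give \emph{conditional} independence with $B_k^\downarrow\,|\,c\sim\betavt{c_k}{c_1+\cdots+c_{k-1}}$, but these conditional parameters depend on $c$ in a correlated way, and a mixture of jointly conditionally independent variables whose parameters are coupled need not be jointly independent; so "handling the mixture over $c$" is not a free move. The paper instead proves Lemma~\ref{L:Hilfs_Beta} by an explicit coin-flip coupling: it constructs the urn process so that each $B_k^\downarrow$ is a measurable function of an IID Bernoulli sequence $(Y_j^k)_j$ with deterministic parameters set at the introduction time $n_0^k$ (where $C_k(n_0^k\cdot l)=\beta$ and the aggregate $\sum_{i<k}C_i(n_0^k\cdot l)$ are both nonrandom), and these sequences are jointly independent across $k$; independence of the $B_k^\downarrow$ then follows immediately. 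Your proposal gets the right beta parameters by looking at the introduction times $n_0^k$, so you already have the right ingredients; the missing step is making the per-colour randomness literally disjoint rather than averaging a conditional statement taken at the single time $n_0^r$.
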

The proof relies on the following lemma.
\begin{lemma}\label{L:Hilfs_Beta} Let \(B_k,\,B_k^\downarrow\), \(1\leq k\leq r\) be as in Notation \ref{not:Xr}. We have
\begin{enumerate}[label=\arabic*)]
\item \label{L:Hilfs_Beta:1} 
\(B_1^\downarrow\equiv 1\)\\
\(\displaystyle B_k^\downarrow\dequ \begin{cases}\betavt{C_k(0)}{\sum_{i=1}^{k-1}C_i(0)} \quad&\text{if } 2 \leq k\leq s\\ \betavt{\beta}{\sum_{i=1}^{s}C_i(0)+(l+\beta)(k-1-s)+l}\quad &\text{if } k>s\end{cases}\)
\item \label{L:Hilfs_Beta:2} The \(B_1^\downarrow,\dots,B_r^\downarrow\) are independent.
\item \label{L:Hilfs_Beta:3} Both \((B_1,\dots,B_r)\) and \((B_1^\downarrow,\dots,B_r^\downarrow)\) are independent of \(\zeta_1+\dots+\zeta_r\).
\end{enumerate}
\end{lemma}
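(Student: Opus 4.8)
The plan is to carry out the whole argument inside the urn model of Lemma~\ref{GraphUrnDuality}, where all three claims become statements about limiting proportions of colours in a P\'olya urn with immigration. Write $n_0^k:=l(k-s)\vee 0$ for the urn-time at which colour $k$ is present for the first time and abbreviate $\Fscr^{(k)}:=\Fscr_{n_0^k}$; note $n_0^2\leq n_0^3\leq\dots\leq n_0^r$. Two observations drive everything. First, for any block of colours $\mathcal K\subseteq\{1,\dots,r\}$, once all of $\mathcal K$ is present the draws landing in $\mathcal K$ (or in two disjoint sub-blocks of it) form, as an embedded discrete-time process, an ordinary P\'olya urn in the sense of Lemma~\ref{L:Polya_Dir}: the remaining colours and the immigration only change \emph{when} these draws occur, not their conditional law, and by Lemma~\ref{L:InftyDegree} infinitely many of them occur almost surely, so the embedded urn attains its Dirichlet/Beta limit, which by $C_i(n\cdot l)=D_i(n)$ and Proposition~\ref{P:martingale_p_factorial} coincides with the proportion of the corresponding $\zeta_i$'s. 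Second, at time $n_0^k$ the \emph{lumped} count $C_1(n_0^k)+\dots+C_{k-1}(n_0^k)$ is deterministic: for $k\leq s$ it equals $\sum_{i=1}^{k-1}C_i(0)$ by definition, while for $k>s$ every ball present at time $n_0^k=(k-s)l$ other than the $\beta$ freshly immigrated balls of colour $k$ belongs to a colour $<k$, so the count equals the deterministic total $\sum_{i=1}^{s}C_i(0)+(k-s)l+(k-s)\beta$ minus $\beta$, i.e.\ exactly the $b_k$ of the statement; similarly $C_k(n_0^k)$ equals $C_k(0)$ or $\beta$, i.e.\ the $a_k$ of the statement.

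For part~\ref{L:Hilfs_Beta:1}, $B_1^\downarrow\equiv 1$ is the a.s.\ limit of $C_1(nl)/C_1(nl)$. For $k\geq 2$ I lump colours $1,\dots,k-1$ into one colour $A$ and keep colour $k$ as colour $B$. Conditionally on $\Fscr^{(k)}$ the embedded two-colour P\'olya urn on $\{A,B\}$ starts from the pair $(C_B(n_0^k),C_A(n_0^k))=(a_k,b_k)$, so by Lemma~\ref{L:Polya_Dir} with $r=2$ its limiting $B$-proportion, which is $B_k^\downarrow=\lim_n C_k(nl)/(C_1(nl)+\dots+C_k(nl))=\zeta_k/(\zeta_1+\dots+\zeta_k)$, is $\betavt{a_k}{b_k}$. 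Since these parameters are deterministic the conditional law does not depend on $\Fscr^{(k)}$, which gives simultaneously the claimed marginal law and $B_k^\downarrow\indep\Fscr^{(k)}$; the latter will feed into part~\ref{L:Hilfs_Beta:2}.

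For part~\ref{L:Hilfs_Beta:2} I would induct on $r$, the case $r=2$ being part~\ref{L:Hilfs_Beta:1} together with $B_1^\downarrow$ being constant. For the step, condition on $\Fscr^{(r)}$: all of colours $1,\dots,r$ are present, so by the embedding $S^r=(\zeta_1,\dots,\zeta_r)/(\zeta_1+\dots+\zeta_r)$ follows, conditionally on $\Fscr^{(r)}$, a $\dirvt{C_1(n_0^r),\dots,C_r(n_0^r)}$ law. The classical backward stick-breaking property of the Dirichlet distribution (a short change of variables, or \cite{mosimann69}) then makes $B_2^\downarrow,\dots,B_r^\downarrow$ conditionally independent given $\Fscr^{(r)}$, with $B_r^\downarrow\dequ\betavt{C_r(n_0^r)}{\sum_{i<r}C_i(n_0^r)}=\betavt{a_r}{b_r}$, whose parameters are deterministic. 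Hence for bounded $g_2,\dots,g_r$,
\begin{align*}
\E\Bigl[\prod_{k=2}^{r}g_k(B_k^\downarrow)\Bigr]
&=\E\Bigl[\prod_{k=2}^{r}\E\bigl[g_k(B_k^\downarrow)\mid\Fscr^{(r)}\bigr]\Bigr]
=\E\bigl[g_r(B_r^\downarrow)\bigr]\cdot\E\Bigl[\prod_{k=2}^{r-1}\E\bigl[g_k(B_k^\downarrow)\mid\Fscr^{(r)}\bigr]\Bigr]\\
&=\E\bigl[g_r(B_r^\downarrow)\bigr]\cdot\E\Bigl[\prod_{k=2}^{r-1}g_k(B_k^\downarrow)\Bigr],
\end{align*}
using conditional independence and the tower rule twice and that $\E[g_r(B_r^\downarrow)\mid\Fscr^{(r)}]$ is a constant; the induction hypothesis applied to the last factor closes the step.

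For part~\ref{L:Hilfs_Beta:3} it suffices to show $S^r\indep T_r$, where $T_r:=\zeta_1+\dots+\zeta_r$, because both $(B_1,\dots,B_r)$ and $(B_1^\downarrow,\dots,B_r^\downarrow)$ are deterministic functions of $S^r$. I lump colours $1,\dots,r$ into a block $A$ and all other colours (present or immigrating later) into a block $R$. Conditionally on $\Fscr^{(r)}$, a short factorisation of trajectory probabilities shows that the block-label sequence and the sequence of colours drawn within $A$ are independent: the latter is an ordinary $r$-colour P\'olya urn started from $(C_1(n_0^r),\dots,C_r(n_0^r))$, whose vector of limiting proportions is $S^r$; the former is a two-colour P\'olya urn with deterministic immigration into $R$, started from the pair $(C_A(n_0^r),C_R(n_0^r))$, which is deterministic, since for $r\le s$ everything at time $0$ is deterministic and for $r>s$ no colour $>r$ has immigrated yet at time $n_0^r$, forcing $C_R(n_0^r)=0$ and $C_A(n_0^r)$ to be the deterministic total. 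As $T_r=\lim_n n^{-l/(l+\beta)}(C_1(nl)+\dots+C_r(nl))$ is a function of the block-label sequence only, we get $S^r\indep T_r$ conditionally on $\Fscr^{(r)}$, and the conditional law of $T_r$ does not depend on $\Fscr^{(r)}$ because its starting data are deterministic, so $T_r\indep\Fscr^{(r)}$; conditioning and the tower rule then upgrade this to unconditional independence exactly as in the display above. The main technical obstacle is making the two reductions watertight --- that the embedded sub-urns genuinely are P\'olya urns in the sense of Lemma~\ref{L:Polya_Dir} with the other colours acting only as a random time change (requiring infinitely many relevant draws, i.e.\ Lemma~\ref{L:InftyDegree}), and the block/within-block independence used in part~\ref{L:Hilfs_Beta:3}; once these are in place, what remains is bookkeeping of the deterministic ball counts at the times $n_0^k$ and the classical Dirichlet stick-breaking identity.
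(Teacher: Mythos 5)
Your part~\ref{L:Hilfs_Beta:1} follows the paper's argument essentially verbatim: lump colours $1,\dots,k-1$ into one colour and colour $k$ into another, note that the composition at $n_0^k$ is deterministic, and invoke Lemma~\ref{L:Polya_Dir} for the embedded two-colour urn. Your parts~\ref{L:Hilfs_Beta:2} and~\ref{L:Hilfs_Beta:3}, however, take a genuinely different route. The paper proves both by an explicit \emph{coin-flipping} reconstruction of $C^r(n)$: at each draw, traverse the colours in a fixed order (``$>r$'', then $r, r-1, \dots, 1$), flipping a Bernoulli coin for each until heads; it is then checked that the flip sequence $(Y^i_j)_j$ for a given colour $i$ depends only on deterministic starting data and its own past, so the sequences $(Y^1_j)_j,\dots,(Y^r_j)_j,(Y^{>r}_j)_j$ are jointly independent, and $B_i^\downarrow$ is a function of $(Y^i_j)_j$ alone while $\zeta_1+\dots+\zeta_r$ is a function of $(Y^{>r}_j)_j$ alone. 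You instead condition on $\Fscr_{n_0^r}$, observe (as in Theorem~\ref{L:Spectral_distr}.1) that $S^r$ is conditionally $\dirvt{C_1(n_0^r),\dots,C_r(n_0^r)}$, use the classical backward stick-breaking identity to get conditional independence of $B_2^\downarrow,\dots,B_r^\downarrow$, and then peel off $B_r^\downarrow$ (whose conditional law is deterministic) and close by induction; for part~\ref{L:Hilfs_Beta:3} you factor the trajectory law into a block-label chain (which determines $\zeta_1+\dots+\zeta_r$) and a within-block chain (which determines $S^r$). Both routes work. The paper's construction is more elementary and gives parts~\ref{L:Hilfs_Beta:2} and~\ref{L:Hilfs_Beta:3} simultaneously from one device, with the independence visible ``by construction''; yours is shorter to state and leans on a standard Dirichlet identity, but, as you note yourself, the two reductions (that the embedded sub-urns truly are P\'olya urns with only a random time change, and the block/within-block factorisation) are stated rather than verified --- the paper's coin-flip representation is precisely the device that makes these reductions rigorous, so a fully detailed version of your proof would likely end up reconstructing something close to it.
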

The proof of this lemma is deferred to section \ref{Sec:BetaIndProof}.

\begin{proof}[Proof of Theorem~\ref{L:Spectral_distr}]
\begin{enumerate}[label=\arabic*)]
\item We start by conditioning the urn model on its composition at time \(n_0^r\), which is how the mixture distributions in the statement arise. Given that \(C^r(n_0^r)=(c_1,\dots,c_r)\in\R^r_{\geq 0}\), let \(t_i\) be the random time where for the \(i\)th time a ball from one of the colours \(1\) to \(r\) is chosen. At these times the vector of ratios of balls of each colour to the total number of balls with colours \(1\) to \(r\) behaves just like in a traditional multi-colour P{\'o}lya urn and therefore by Lemma \ref{L:Polya_Dir} converges almost surely to a Dirichlet distribution with \(c_1,\dots,c_r\) as parameters. At the same time, however, it converges (conditionally on \(C^r(n_0^r)=(c_1,\dots,c_r)\)) to \(S^r\), and so the statement follows.
\item We write 
$$ S_{\downarrow}^r=\left(B_r^{\downarrow},(1-B_r^{\downarrow})B_{r-1}^{\downarrow}, \ldots, \left(\prod_{i=2}^r (1-B_{i}^{\downarrow})\right)B_{1}^{\downarrow}\right)$$
and since by Lemma~\ref{L:Hilfs_Beta} the $B_{i}^{\downarrow}$'s are independent and have beta distributions, we can apply Remark \ref{Rem:ProofSketch_Dir} to arrive at the generalised Dirichlet distribution with the given parameters.
\end{enumerate}
\end{proof}

With these tools, we are now equipped to characterize the spectral measure.
\begin{cor}\label{C:Spectral_Measure}
The spectral measure of regular variation for the random vectors \((D_1(N),\) \(\dots,D_r(N))\) and \((D_r(N),\dots,D_1(N))\), with respect to the norm \(\norm{\cdot}_1\), coincides with the distribution of \(S^{r}\) and \(S^r_\downarrow\), respectively, from Theorem \ref{L:Spectral_distr}.
\end{cor}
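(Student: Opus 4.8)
The plan is to read the spectral measure off the explicit limit measure produced by the Breiman-type Theorem~\ref{Breiman_generalised}, and to exploit the independence in Lemma~\ref{L:Hilfs_Beta}, part~\ref{L:Hilfs_Beta:3}, to see that no reweighting of the angular part occurs. Recall that in the proof of Theorem~\ref{th:mrvparg} the vector $D^r(N)$ was written as $T\cdot X_T$ with $T=N^{l/(l+\beta)}$ regularly varying of index $\alpha':=\alpha\cdot\frac{l+\beta}{l}$ and $X_t\to X_\infty=\zeta^r$ almost surely, where $\zeta^r$ is the limiting vector from Proposition~\ref{P:martingale_p_factorial}, part~2). Hence \eqref{eq:Breiman_generalised} applies and gives that a $b(t)$-scaled version of $\Pp(D^r(N)\in\cdot)$ converges in $\M((\R^r)^\ast)$ to
\[
\mu:=[\nu_{\alpha'}\otimes\Pp(\zeta^r\in\cdot)]\circ\tilde h^{-1},\qquad \tilde h(x,y)=x\cdot y .
\]
By the uniqueness of the limit measure up to a positive multiplicative constant, the spectral measure $S$ of Theorem~\ref{th:chara_MRV}~\ref{th:chara_MRV_2} is the \emph{unique} probability measure on $\S_{\norm{\cdot}_1}$ satisfying $\mu=c\cdot(\nu_{\alpha'}\otimes S)\circ h^{-1}$ for some $c>0$, with $h(x,y)=x\cdot y$ on $\R_{>0}\times\S_{\norm{\cdot}_1}$; so it is enough to bring $\mu$ into exactly this polar form.

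For this I would split $\zeta^r$ into its $\ell_1$-norm and its angular part. By Notation~\ref{not:Xr} the angular part is exactly $S^r=\zeta^r/\norm{\zeta^r}_1$, which is well defined since condition~\ref{Brei_generalised:conv} of Theorem~\ref{Breiman_generalised} was verified with $\zeta^r\in(\R^r)^\ast$, i.e.\ $\norm{\zeta^r}_1>0$ almost surely. Writing $S_1=B_1$ and $S_k=\big(\prod_{i=1}^{k-1}(1-B_i)\big)B_k$ exhibits $S^r$ as a measurable function of $(B_1,\dots,B_r)$, so Lemma~\ref{L:Hilfs_Beta}, part~\ref{L:Hilfs_Beta:3}, gives that $S^r$ is independent of $\norm{\zeta^r}_1=\zeta_1+\dots+\zeta_r$. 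Consequently $\Pp(\zeta^r\in\cdot)=[\Pp(\norm{\zeta^r}_1\in\cdot)\otimes\Pp(S^r\in\cdot)]\circ h^{-1}$; substituting this into $\mu$ and using the homogeneity $\nu_{\alpha'}(\rho^{-1}\cdot)=\rho^{\alpha'}\nu_{\alpha'}(\cdot)$ to absorb the radius $\rho=\norm{\zeta^r}_1$ into $\nu_{\alpha'}$ yields
\[
\mu=\E\big(\norm{\zeta^r}_1^{\alpha'}\big)\cdot(\nu_{\alpha'}\otimes\Pp(S^r\in\cdot))\circ h^{-1} .
\]
The prefactor is finite: since $x^{\alpha'}\leq 1+x^{\lceil\alpha'\rceil}$ for $x\geq0$, Fatou's lemma bounds $\E(\norm{\zeta^r}_1^{\alpha'})$ by $1+\liminf_{t\to\infty}\E(\norm{X_t}_1^{\lceil\alpha'\rceil})$, which is finite by the moment bound established in the proof of Theorem~\ref{th:mrvparg} (Lemma~\ref{L:MomentCond}).

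Comparing the last display with $\mu=c\cdot(\nu_{\alpha'}\otimes S)\circ h^{-1}$ and using that both $S$ and $\Pp(S^r\in\cdot)$ are probability measures on $\S_{\norm{\cdot}_1}$ forces $c=\E(\norm{\zeta^r}_1^{\alpha'})$ and $S=\Pp(S^r\in\cdot)$: for instance, evaluating both decompositions on the set $\{x:\norm{x}_1>1\}$, whose $h$-preimage is $(1,\infty)\times\S_{\norm{\cdot}_1}$, pins the angular marginal down uniquely. This proves the claim for $(D_1(N),\dots,D_r(N))$. For the reversed vector $(D_r(N),\dots,D_1(N))$ I would run the identical argument: a coordinate permutation is an $\norm{\cdot}_1$-isometric homeomorphism fixing $0$, so $(D_r(N),\dots,D_1(N))$ is multivariate regularly varying with the same index and almost sure limit $(\zeta_r,\dots,\zeta_1)$, whose $\ell_1$-angular part is $S^r_\downarrow$; and $S^r_\downarrow$ is a measurable function of $(B_1^\downarrow,\dots,B_r^\downarrow)$, hence independent of $\zeta_1+\dots+\zeta_r$ again by Lemma~\ref{L:Hilfs_Beta}, part~\ref{L:Hilfs_Beta:3}. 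The same computation then identifies its spectral measure with $\Pp(S^r_\downarrow\in\cdot)$.

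The step that I expect to require the most care is the bookkeeping around the two normalisations in play: the limit measure in \eqref{eq:Breiman_generalised} is attached to the scaling function $b(t)$, while the spectral measure in Theorem~\ref{th:chara_MRV}~\ref{th:chara_MRV_2} (and the limit measure in Definition~\ref{D:MRV}) is attached to the normalisation $\Pp(\norm{X}_1>t)$, and the two differ by an unknown positive constant. The argument must therefore extract the angular part as a genuine probability measure — which is what makes it unique and identifiable — rather than compare the limit measures directly; the accompanying homogeneity rearrangement and the finiteness of $\E(\norm{\zeta^r}_1^{\alpha'})$ are the remaining points where one must be precise, but these are routine given Lemma~\ref{L:Hilfs_Beta} and the moment bounds already in place.
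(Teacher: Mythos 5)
Your proposal is correct and follows essentially the same route as the paper: starting from the Breiman limit measure $\mu=[\nu_{\alpha'}\otimes\Pp(\zeta^r\in\cdot)]\circ\tilde h^{-1}$, invoking the independence of $\|\zeta^r\|_1$ and the angular part $S^r$ from Lemma~\ref{L:Hilfs_Beta}, part~\ref{L:Hilfs_Beta:3}, and absorbing the radial factor into $\nu_{\alpha'}$ by homogeneity. The paper computes $S(A)$ directly as a ratio of $\mu$-measures of polar sets, whereas you package the same calculation as a uniqueness-of-polar-decomposition argument and add an explicit Fatou argument for finiteness of the prefactor; these are stylistic rather than substantive differences.
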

\begin{proof}
We restrict ourselves to the forwards case, i.e. focus on \((D_1(N),\) \(\dots,D_r(N))=D^r(N)\). By Equation \eqref{eq:Breiman_generalised} for the limit measure \(\mu\) provided in Theorem~\ref{Breiman_generalised} and with $X_\infty$ given by $\zeta^r$ we get \[\mu=( \nu_\alpha \otimes \Pp(\zeta^r\in\cdot))\circ \tilde{h}^{-1}\]
with \(\tilde{h}:\R_{>0} \times \R^r_{\geq 0}\to\R^r_{\geq 0},\,h(x,y)=x\cdot y\) and \(\zeta^r=(\zeta_1,\dots,\zeta_r)\). On the other hand, by Theorem~\ref{th:chara_MRV}, b) the corresponding spectral measure $S(A)$, for an $S$-continuity Borel set \(A \subset \mathbb{S}_{\| \cdot \|}\) is given by
\begin{align*} S(A)&=\lim_{t \to \infty} \frac{\Pp\left(\frac{D^r(N)}{\|(D^r(N)\|_1} \in A, \|(D^r(N)\|_1>t\right)}{P(\|(D^r(N)\|_1>t)}\\
&=\frac{\mu(\{x \in \R^r_{\geq 0}: \| x \|_1 >1, x/\|x\|_1 \in A\})}{\mu(\{x \in \R^r_{\geq 0}: \| x \|_1 >1\})}.\end{align*}
Now,
\begin{align*}
& \quad \; \mu(\{x \in \mathbb{R}^r: \| x \|_1 >1, x/\|x\|_1 \in A\} \\
&=\int_{\R^r_{\geq 0}}\int_{\R_{>0}}\1_{\{x\cdot\| y\|_1>1\}}\1_{\{\frac{y}{\| y\|_1}\in A\}}\,\nu_\alpha(\dint x)\,\Pp(\zeta^r\in\dint y)\\
&=\int_{\R^r_{\geq 0}}\| y\|_1^\alpha\1_{\{\frac{y}{\| y\|_1}\in A\}}\,\Pp(\zeta^r\in\dint y)\\
&=\E(\|\zeta^r\|_1^\alpha)\cdot\Pp\Bigl(\frac{\zeta^r}{\|\zeta^r\|_1}\in A\Bigr),
\end{align*}
where the last equality follows from the independence of \(\|\zeta^r\|_1=\zeta_1+\dots+\zeta_r\) and \(\frac{\zeta^r}{\|\zeta^r\|_1}=f(B_1,\dots,B_{r})\) (statement \ref{L:Hilfs_Beta:3} in Lemma \ref{L:Hilfs_Beta}), with \(f=(f_1, \ldots, f_r)\) given by:
\[f_i(B_1,\dots,B_r)=B_i\prod_{j=1}^{i-1}(1-B_j), \;\;\; i=1, \ldots, r.\]
Going back to determining \(S\) we thus get
\[S(A)=\frac{\E(\|\zeta^r\|_1^\alpha)\cdot\Pp(\frac{\zeta^r}{\|\zeta^r\|_1}\in A)}{\E(\|\zeta^r\|_1^\alpha)}=\Pp\Bigl(\frac{(\zeta_1,\dots,\zeta_r)}{\zeta_1+\dots+\zeta_r}\in A\Bigr)=\Pp(S^r\in A)\]
and the statement follows.

\end{proof}

We conclude this section with an example that demonstrates how the previous results can be applied to approximate the probabilities of extremal events, starting with a general approach and followed by a specific numerical example.
\begin{ex}\label{Ex:Appl1}
We can use the limit/spectral measure to approximate conditional probabilities given large exceedances of the weight vector. A simple special case is to condition on \(\{\|D^r(N)\|_1> t\}\) for some large \(t\). Let \(A\) be a set in \(\Bscr(\R^r_{\geq 0}\setminus\{0\})\) bounded away from \(0\) and with \(\mu(\partial A)=0\). Then we have, using Theorem \ref{th:chara_MRV} \ref{th:chara_MRV_2} (and the polar coordinate function $h$ as defined there)
\begin{align*}
\Pp(D^r(N)\in tA\,|\,\|D^r(N)\|_1>t)&=\frac{\Pp(D^r(N)/t\in A\cap\{x:\|x\|_1> 1\})}{\Pp(\|D^r(N)\|_1> t)}\\
&\approx (\nu_\alpha\otimes S)\circ h^{-1}(A\cap\{x:\|x\|_1> 1\}),
\end{align*}
which does not depend on \(t\) anymore. So as long as we know that our \(r\) vertices of interest have received a large amount of edges, we can approximate above probability using the limit/spectral measure. 

Next, let us assume that \(A\) only contains information about the proportions of the \(D_1(N),\dots,D_r(N)\), i.e. that \(A\) is of the form \(\{r\cdot\theta|\,(r,\theta)\in(1,\infty)\times A^*\}\) with \(A^*\in\Bscr(\S_{\|\cdot\|_1})\), then 
\begin{align*}\Pp(D^r(N)\in tA\,|\,\|D^r(N)\|_1\geq t)& \approx (\nu_\alpha\otimes S)\circ h^{-1}(A\cap\{x:\|x\|_1> 1\})\\
&= (\nu_\alpha\otimes S)\circ h^{-1}(\{r \cdot \theta: r>1, \theta \in A^*\})\\
&=S(A^*).
\end{align*}
So, given that the total weight of our vector \(D^r(N)\) is large, the proportions of its components follow roughly a mixture of Dirichlet distributions or a reversed generalised Dirichlet distribution.
\end{ex}
\begin{ex}\label{Ex:Appl2}\textcolor{red}{
Following up on Example \ref{Ex:Appl1}, let us consider a concrete extremal event. We want to determine the probability that the first four vertices are extreme with respect to the \(\norm{\cdot}_1\)-norm, while also being in descending order, i.e., with 
\[A=\{r\cdot\theta|\,(r,\theta)\in(1,\infty)\times A^*\}\;\; \mbox{for}\;\; A^*:=\{(x_1,x_2,x_3,x_4)\in\S_{\|\cdot\|_1}|\,x_1\geq x_2\geq x_3\geq x_4\}\]
we want to approximate
\begin{align*}
\Pp(D^4(N)\in tA,\norm{D^4(N)}_1>t)=\Pp(\norm{D^4(N)}_1>t)\cdot\Pp(D^4(N)\in tA\,|\,\norm{D^4(N)}_1>t).
\end{align*}
The second factor can be approximated by the spectral measure as explained in Example \ref{Ex:Appl1}. For the first one, we get, using Theorem \ref{Breiman_generalised},
\begin{align*}
\frac{\Pp(\norm{D^4(N)}_1>t)}{\Pp(N^{l/(l+\beta)}>t)}=u(t)\cdot\Pp\Bigl(\frac{\norm{D^4(N)}_1}{b(u(t))}>\frac{t}{b(u(t))}\Bigr)\overset{t\to\infty}{\to}\E\bigl(\norm{\zeta^4}_1^{\alpha\cdot\frac{l+\beta}{l}}\bigr),
\end{align*}
where \(u(t):=\Pp(N^{\frac{l}{l+\beta}}>t)^{-1}\) and $b:=u^{\leftarrow}$ is the generalised inverse of the monotone function $u$ with \(b(u(t)) \sim t\), see \cite{dehaan06} B1.9 10. Overall, we obtain for large \(t\)
\begin{align}\label{Eq:ExampApprox}
\Pp(D^4(N)\in A,\norm{D^4(N)}_1>t)\approx \Pp(N^{l/(l+\beta)}>t)\cdot\E\bigl(\norm{\zeta^4}_1^{\alpha\cdot\frac{l+\beta}{l}}\bigr)\cdot S(A^*).
\end{align}
To get an impression of the quality of the approximation, we present the results of a small simulation study. The random value of $N$ in our simulations is given by $\lfloor Y \rfloor$, where $Y$ follows a Pareto($\alpha$) distribution with $\alpha=1$, i.e. $P(N \geq k)=k^{-1}, k \in \mathbb{N}$. Table \ref{tabl:sim} shows values of the above approximation \eqref{Eq:ExampApprox} for different choices of model parameters \(l\) and \(\beta\) and threshold $t$, where $\E\bigl(\norm{\zeta^4}_1^{\alpha\cdot\frac{l+\beta}{l}}\bigr)$ follows from \eqref{eq:expec_zetap} and $S(A^*)$ has been evaluated by numerical integration with respect to the density of the generalized Dirichlet distribution. Since no closed expressions exist for the true probabilities, we replace them by empirical probabilities derived from $10^7$ realisations of the network. 
\begin{table}
\begin{centering}
\begin{tabular}{c|c|c|c|c}
 \(l\) & \(\beta\) & \(t\) & empirical probability & approximation  \\\hline
   1   &     1     &  150  &    \(0.00968\%\)     & \(0.00949\%\) \\\hline
   3   &     1     &  500  &    \(0.06325\%\)      & \(0.06215\%\)  \\\hline
   3   &     3     &  500  &    \(0.01272\%\)      & \(0.01255\%\)
\end{tabular}
\caption{\label{tabl:sim}\textcolor{red}{Comparison of left and right hand side \eqref{Eq:ExampApprox} for several parameter constellations. The left hand side was approximated by empirical probabilities based on $10^7$ realisations of the network.}}
\end{centering}
\end{table}
}
\end{ex}

\section{Generalisation to Sequence Spaces}\label{sec:Seq_Spaces}

\subsection{Motivation and Framework}
Example \ref{Ex:Appl2} has shown that the results developed in the previous chapter allow us to approximate probabilities of extreme events - but with the restriction that those extreme events may only depend on a fixed number $r$ of the first nodes in the graph. For several natural applications this approach is then not sufficient, as we would for example like to approximate probabilities of events involving the maximum degree of all existing nodes. In this section, we will extend our scope to include the asymptotic behavior of the entire weight process rather than just its first \(r\) vertices. We will thus again work in the framework of regular variation as in Section~\ref{subsec:BackgroundMRV}, but with a suitably adjusted Banach space $B$ for sequences. The regular variation of random sequences has previously been studied in \cite{tillier18}. In order to find an appropriate $B$ we start with the space of all finite sequences:
\[c_{00}:=\{x=(x_n)_{n\in\N}\in\R^\N|\exists N\in\N\,\forall k\geq N: x_k=0\}.\]
At any time \(n\) our weight process as well as the urn process can be represented as elements of this space:
\begin{align*}
D(n)&:=(D_1(n),\dots,D_{N(0)}(n),D_{N(1)}(n),\dots,D_{N(n)}(n),0,\dots),\\
C(n)&:=(C_1(n\cdot l),\dots,C_{s}(n\cdot l),C_{s+1}(n\cdot l),\dots,C_{s+n}(n\cdot l),0,\dots).
\end{align*}
 A crucial assumption in Theorem \ref{Breiman_generalised} is the almost sure convergence of the process and so we set $B$ equal to the completion of \(c_{00}\) with respect to a norm \(\|\cdot\|\) on $\R^\N$, i.e.
\[B=c_{\|\cdot\|}:=\bigl\{x\in\R^\N\big|\,\|x\|<\infty\text{ and }\exists \, x_n \in c_{00}, n \in \N:\lim_{n\to\infty}\|x-x_n\|= 0\bigr\}.\] In contrast to the previously studied finite dimensional setting, different norms on sequence spaces are no longer equivalent, which means that in order to study regular variation we have to find a suitable norm \(\|\cdot\|\) for our model, where we restrict ourselves to the $\ell_p$-norms. Then, by construction, $B$ is a separable Banach space which allows us to employ the framework from Section~\ref{subsec:BackgroundMRV} again.

\subsection{The Breiman Conditions in Sequence Space}
Our goal is to establish a result similar to Theorem \ref{th:mrvparg} again by virtue of Breiman's Lemma. To this end, we need to check that Conditions \ref{Brei_generalised:conv} and \ref{Brei_generalised:moment} of Theorem~\ref{Breiman_generalised} are satisfied, which is shown in the following Proposition.
\begin{prop}\label{P:BreiCond} Assume the graph / urn model of Lemma~\ref{GraphUrnDuality} and let 
$$ \zeta:=(\zeta_i)_{i \in \mathbb{N}}=\left(\lim_{n \to \infty} \frac{D_i(n)}{n^{l/(l+\beta)}}\right)_{i \in \N},$$ cf.\ Proposition~\ref{P:martingale_p_factorial}, 2). Furthermore, let \(p\in[1,\infty]\).
\begin{itemize}
    \item If \(p>\frac{l+\beta}{l}\), then
\begin{enumerate}[label=\arabic*)]
\item \label{P:BreiCond:zetainc} \(\zeta\in c_{\norm{\cdot}_p}\) a.s. since both 
\begin{enumerate}[ref=(\alph*)]
\item \label{P:BreiCond:zeta_p-smble} \(\norm{\zeta}_p<\infty\) a.s. and
\item \label{P:BreiCond:p-conv} \(\begin{aligned} \frac{D(n)}{n^{l/(l+\beta)}}\to\zeta \text{ a.s. in } c_{\norm{\cdot}_p} \end{aligned}\) as \(n\to\infty\)
\end{enumerate}
\item \label{P:BreiCond:moments} \(\begin{aligned}\sup_n\E\Bigl(\Bigl\lVert\frac{D(n)}{n^{l/(l+\beta)}}\Bigr\rVert_p^\alpha\Bigr)<\infty\end{aligned}\) for every \(\alpha>0\).
\end{enumerate}
 \item If \(p<\frac{l+\beta}{l}\) then 
\begin{enumerate}[resume, label=\arabic*)]
\item\label{P:BreiCond:no_p-conv} \(\begin{aligned} \frac{D(n)}{n^{l/(l+\beta)}}\not\to\zeta \text{ a.s. in } c_{\norm{\cdot}_p} \end{aligned}\) as \(n\to\infty\).
\end{enumerate}
\end{itemize}
\end{prop}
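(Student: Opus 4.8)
The plan is to analyze the three claims separately, with claim \ref{P:BreiCond:zeta_p-smble} serving as the foundation for everything else. First I would establish that $\norm{\zeta}_p < \infty$ a.s.\ whenever $p > (l+\beta)/l$. The key is to control the tail of the sequence $(\zeta_i)_i$. From Proposition~\ref{P:martingale_p_factorial}, the mixed moments of the $\zeta_i$ are available explicitly via \eqref{eq:expec_zetap}; in particular, taking $k_i = q$ for a single index $i$ and all other exponents zero gives $\E(\zeta_i^q)$ in closed form as a ratio of Gamma-type factors. Using the asymptotics $c(n,k) \sim n^{kl/(l+\beta)}$ and Stirling-type estimates for the generalized binomial coefficients and the normalizing constants, one should obtain a bound of the form $\E(\zeta_i^q) \le C_q \cdot i^{-ql/(l+\beta)}$ (up to lower-order corrections), uniformly in $i$ large. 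Then $\E(\norm{\zeta}_p^p) = \sum_i \E(\zeta_i^p) \le C_p \sum_i i^{-pl/(l+\beta)} < \infty$ precisely when $pl/(l+\beta) > 1$, i.e.\ $p > (l+\beta)/l$. This simultaneously yields a moment bound on $\norm{\zeta}_p$ itself. The main obstacle here is making the per-coordinate moment estimate genuinely uniform in $i$: the formula \eqref{eq:expec_zetap} has an $i$-dependent structure through the factors $c((i-s)\vee 0, \cdot)$ and the offset $\beta$ in the binomial coefficient, so one has to track the $i$-dependence carefully rather than treat it as a fixed constant.

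Next, for the almost sure convergence \ref{P:BreiCond:p-conv}, I would argue that $D(n)/n^{l/(l+\beta)} \to \zeta$ coordinatewise almost surely (Proposition~\ref{P:martingale_p_factorial}, part 2) already gives this), and then upgrade coordinatewise convergence to $\norm{\cdot}_p$-convergence via a uniform-integrability / tail-control argument. Concretely, it suffices to show that the tail sums $\sum_{i > M} |D_i(n)/n^{l/(l+\beta)}|^p$ are small uniformly in $n$, with high probability, for $M$ large. Since $D_i(n) = 0$ for $i > N(0)+n$, and for $n^{l/(l+\beta)}$ in the denominator one can use the same moment bounds as above (now applied to $D_i(n)$ via the urn representation and Lemma~\ref{L:MomentCond}) to get $\E\big(\sum_{i>M}(D_i(n)/n^{l/(l+\beta)})^p\big) \le C_p \sum_{i>M} i^{-pl/(l+\beta)} \to 0$ as $M \to \infty$ uniformly in $n$. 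Combining this uniform tail estimate with coordinatewise a.s.\ convergence on the finite block $i \le M$ gives a.s.\ convergence in $c_{\norm{\cdot}_p}$ by a standard $3\epsilon$-argument; one should also note $\zeta \in c_{\norm{\cdot}_p}$ follows since its truncations lie in $c_{00}$ and converge in norm.

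For the moment bound \ref{P:BreiCond:moments}, given $\alpha > 0$ choose an even integer $q \ge \alpha$ with $q > $ the relevant threshold; then by Jensen/monotonicity of $L^p$-norms it suffices to bound $\sup_n \E\big(\norm{D(n)/n^{l/(l+\beta)}}_p^q\big)$. Expanding the $q$-th power of the $\ell_p$-norm as a sum and using the multivariate moment bounds from Lemma~\ref{L:MomentCond} (or \eqref{eq:expec_zetap} in the limit), together with the geometric decay established above, yields a finite bound uniform in $n$ — this is essentially the sequence-space analogue of step 4) in the proof of Theorem~\ref{th:mrvparg}. Finally, for the negative statement \ref{P:BreiCond:no_p-conv} when $p < (l+\beta)/l$, the idea is to show the limit candidate $\zeta$ fails to lie in $c_{\norm{\cdot}_p}$, or that the convergence fails: here I would use that $\zeta_i \sim$ (in distribution) something of order $i^{-l/(l+\beta)}$ with non-degenerate fluctuations — more precisely, lower-bound $\sum_i \zeta_i^p$ by a sum of independent-ish positive contributions (exploiting the stick-breaking/Beta structure from Lemma~\ref{L:Hilfs_Beta}) whose expectation diverges when $pl/(l+\beta) < 1$, and then invoke a second-moment or Kolmogorov-type argument to conclude $\norm{\zeta}_p = \infty$ a.s.; since $D(n)$ is finitely supported, $\norm{D(n)}_p < \infty$ always, so $D(n)/n^{l/(l+\beta)}$ cannot converge to $\zeta$ in $\norm{\cdot}_p$. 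The main obstacle overall is the two-sided sharp per-coordinate estimate $\E(\zeta_i^p) \asymp i^{-pl/(l+\beta)}$ with correct constants on both sides — the upper bound drives the positive results and a matching lower bound (in a suitable almost-sure, not just in-expectation, sense) is what makes the divergence in \ref{P:BreiCond:no_p-conv} rigorous.
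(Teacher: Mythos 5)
Your treatment of 1)(a) matches the paper (monotone convergence plus the per-coordinate bound $\E(\zeta_i^p)\asymp i^{-pl/(l+\beta)}$ from \eqref{eq:expec_zetap} and Lemma~\ref{L:Stirling}), and your sketch for 3) is a valid alternative, though heavier than the paper's: the paper avoids all analysis of $\zeta$ by noting that $\|D(n)\|_1$ is \emph{deterministic} (of order $n$), that the $\ell_p$-norm over vectors of fixed $\ell_1$-norm is minimized at the equidistributed vector, and hence $\|D(n)/n^{l/(l+\beta)}\|_p^p\gtrsim n^{1-pl/(l+\beta)}\to\infty$ deterministically, ruling out convergence to anything.

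The genuine gap is in your argument for 1)(b). You derive a bound of the form $\E\bigl(\sum_{i>M}(D_i(n)/n^{l/(l+\beta)})^p\bigr)\le c_M$ uniformly in $n$ with $c_M\to 0$, and then claim a ``standard $3\epsilon$-argument'' upgrades coordinatewise a.s.\ convergence to a.s.\ $\ell_p$-convergence. But a uniform-in-$n$ bound on \emph{expectations} of the tails only gives, via Markov, that for each fixed $n$ the tail is small in probability; it gives no control on $\sup_n\sum_{i>M}(D_i(n)/n^{l/(l+\beta)})^p$, which is what the $3\epsilon$-argument actually requires along the a.s.\ realizations. To fix this one needs some maximal-inequality or a.s.\ structure, not just bounded expectations — and this is precisely where the paper invests effort: it invokes Scheff\'e's theorem (componentwise a.s.\ convergence plus a.s.\ convergence of $\|D(n)/n^{l/(l+\beta)}\|_p^p$ to $\|\zeta\|_p^p$ gives $\ell_p$-convergence), and establishes the a.s.\ norm convergence by showing $\sum_{i\le N(n)} c(n,p)^{-1}\binom{D_i(n)+p-1}{p}$ is a uniformly integrable \emph{submartingale}, hence convergent a.s.\ and in $L^1$, identifying its limit with $\sum_i\zeta_i^p/\Gamma(p+1)$ via the $L^1$-convergence and finishing with Pratt's lemma. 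Your route would need an analogous Doob-type maximal bound on the tail submartingale to close the gap. A secondary, smaller issue: for 2) you propose to ``expand the $q$-th power of the $\ell_p$-norm as a sum,'' but $(\sum_i|x_i|^p)^{q/p}$ has no useful multinomial expansion unless $q/p\in\N$; the paper instead splits into $\alpha\le p$ (H\"older) and $\alpha>p$ (Jensen on the concave $p/\alpha$-quasinorm), which is the cleaner way to reduce to per-coordinate moments.
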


The proof of this proposition is deferred to Section \ref{Sec:BreiCondProof}.

\begin{remark}
Proposition \ref{P:BreiCond} generalises the convergence results of Theorem 2 in \cite{pekoz17} from weak to almost sure convergence and allows for general $l$ and $\beta$. 
\end{remark}
\subsection{Regular Variation in Sequence Space}
\begin{theorem}\label{Th:RVsequence}
Consider a preferential attachment random graph with the notation as introduced in the last sections. Let \(D(n):=(D_1(n),\) \(D_2(n),\dots)\) be the corresponding weight sequence at time \(n\in\N\) and let \(N\) be a positive integer-valued random variable. Let \(p \in (\frac{l+\beta}{l},\infty]\) and \(\norm{\cdot}\) be a norm such that there exists some \(C>0\) with \(\norm{\cdot}\leq C\norm{\cdot}_p\). If the following conditions are satisfied:
\begin{enumerate}[label=\arabic*)]
\item
\(N\) and \((D(n))_n\) are independent and
\item
\(N\) is regularly varying with index \(\alpha>0\)
\end{enumerate}
then \(D(N)\) is multivariate regularly varying in \(c_{\norm{\cdot}}\) with index \(\alpha\cdot \frac{l+\beta}{l}\) .
\end{theorem}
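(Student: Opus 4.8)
The plan is to transcribe the proof of Theorem~\ref{th:mrvparg} into the separable Banach space $B=c_{\norm{\cdot}}$ and apply the Banach-space Breiman result, Theorem~\ref{Breiman_generalised}. I would set $X_t:=0$ for $t<1$ and $X_t:=\tfrac1t\,D(\lceil t^{(l+\beta)/l}\rceil)$ for $t\geq1$, and $T:=N^{l/(l+\beta)}$, so that $T\cdot X_T=D(N)$. Since each $D(n)$ is a finite sequence, it lies in $c_{00}\subseteq c_{\norm{\cdot}_p}\subseteq c_{\norm{\cdot}}$, the last inclusion because $\norm{\cdot}\leq C\norm{\cdot}_p$; hence $(X_t)$ is a genuine $B$-valued process, one-sidedly continuous because $t\mapsto\lceil t^{(l+\beta)/l}\rceil$ is a step function and $t\mapsto1/t$ is continuous. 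It then remains to verify the four hypotheses of Theorem~\ref{Breiman_generalised} and read off the index.

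For hypotheses~1) and~2) I would argue exactly as in the proof of Theorem~\ref{th:mrvparg}: independence of $(X_t)$ and $T$ follows from assumption~1), as $X_t$ is a measurable function of $(D(n))_n$ and $T$ of $N$, while
\[\frac{\Pp(T>x\lambda)}{\Pp(T>x)}=\frac{\Pp\bigl(N>x^{(l+\beta)/l}\lambda^{(l+\beta)/l}\bigr)}{\Pp\bigl(N>x^{(l+\beta)/l}\bigr)}\overset{x\to\infty}{\to}\lambda^{-\alpha(l+\beta)/l}\]
shows $T$ is regularly varying with index $\alpha(l+\beta)/l$. For hypothesis~\ref{Brei_generalised:conv} I would substitute $t'=t^{l/(l+\beta)}$, giving $X_{t'}=(\lceil t\rceil/t)^{l/(l+\beta)}\cdot\lceil t\rceil^{-l/(l+\beta)}D(\lceil t\rceil)$; since $\lceil t\rceil/t\to1$ and, because $p>(l+\beta)/l$, Proposition~\ref{P:BreiCond}\,\ref{P:BreiCond:p-conv} gives $n^{-l/(l+\beta)}D(n)\to\zeta$ almost surely in $c_{\norm{\cdot}_p}$, the bound $\norm{\cdot}\leq C\norm{\cdot}_p$ transfers this convergence to $c_{\norm{\cdot}}$, so $X_t\to\zeta$ a.s. Here I also need $\zeta\in c_{\norm{\cdot}}\setminus\{0\}$ a.s., which holds as in the finite-dimensional setting (e.g.\ $\zeta_1>0$ a.s.\ by the P{\'o}lya-urn / Beta representation behind Proposition~\ref{P:martingale_p_factorial} and Lemma~\ref{L:Hilfs_Beta}). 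For hypothesis~\ref{Brei_generalised:moment} I would fix any $\alpha'>\alpha(l+\beta)/l$; using $\norm{\cdot}\leq C\norm{\cdot}_p$, the same substitution and $1\leq\lceil t\rceil/t\leq2$ for $t\geq1$, the claim reduces to $\sup_n\E\bigl(\norm{n^{-l/(l+\beta)}D(n)}_p^{\alpha'}\bigr)<\infty$, which is precisely Proposition~\ref{P:BreiCond}\,\ref{P:BreiCond:moments}.

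With all four conditions in place, Theorem~\ref{Breiman_generalised} then gives that $D(N)=T\cdot X_T$ is multivariate regularly varying in $c_{\norm{\cdot}}\setminus\{0\}$ with index $\alpha(l+\beta)/l$ and limit measure $[\nu_{\alpha(l+\beta)/l}\otimes\Pp(\zeta\in\cdot)]\circ\tilde h^{-1}$, which is the assertion. I expect the only genuinely non-routine point to be the bookkeeping with the two norms: the ingredients from Proposition~\ref{P:BreiCond} are phrased for $\norm{\cdot}_p$, so one must check that the almost sure limit statement, the uniform moment bound, and the non-degeneracy $\zeta\neq0$ all survive the passage to the dominated norm $\norm{\cdot}$. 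Beyond that the argument is a line-by-line copy of the finite-dimensional one, so the theorem is essentially a corollary of Proposition~\ref{P:BreiCond} and Theorem~\ref{Breiman_generalised}.
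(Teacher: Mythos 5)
Your proof is correct and follows essentially the same route as the paper: the paper likewise observes that Proposition~\ref{P:BreiCond} supplies the almost sure convergence and uniform moment bound required by Theorem~\ref{Breiman_generalised} after transferring from $\norm{\cdot}_p$ to the dominated norm $\norm{\cdot}$, and then argues exactly as in the proof of Theorem~\ref{th:mrvparg}. You have merely written out in full the verification the paper summarises as ``completely analogous.''
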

\begin{proof} Note first that our assumptions guarantee that the convergence results of Proposition \ref{P:BreiCond} hold analogously with the norm $\|\cdot \|_p$ replaced by $\|\cdot \|$. 
Thus, the assumptions of Theorem \ref{Breiman_generalised}) are met and the proof is completely analogous to the proof of Theorem \ref{th:mrvparg}.
\end{proof}
This result finally allows us to derive (univariate) regular variation of the maximum degree of a preferential attachment model.
\begin{cor}\label{Cor:MaxRV}
Under the assumptions of Theorem \ref{Th:RVsequence} the maximum degree $sup_{i \in \mathbb{N}}D_i(N)$ is regularly varying with index $\alpha \cdot \frac{l+\beta}{l}$.
\end{cor}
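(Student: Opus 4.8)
The plan is to obtain the statement as a direct specialisation of Theorem~\ref{Th:RVsequence} to the supremum norm, followed by an application of Remark~\ref{Rem:RVofNorm}. First I would invoke Theorem~\ref{Th:RVsequence} with $p=\infty$ and $\norm{\cdot}=\norm{\cdot}_\infty$: this choice is admissible because $\infty\in(\frac{l+\beta}{l},\infty]$ and the required domination $\norm{\cdot}\leq C\norm{\cdot}_p$ holds trivially with $C=1$. Hence $D(N)$ is multivariate regularly varying in $c_{\norm{\cdot}_\infty}$ with index $\alpha\cdot\frac{l+\beta}{l}$.

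Next I would apply Remark~\ref{Rem:RVofNorm} to $X=D(N)$ viewed as an element of the separable Banach space $c_{\norm{\cdot}_\infty}$. Since $X$ is multivariate regularly varying, its norm $\norm{D(N)}_\infty$ is (univariate) regularly varying on $\R_{>0}$, and inspection of Definition~\ref{D:MRV} together with the homogeneity of the limit measure shows that the index of regular variation of $\norm{D(N)}_\infty$ equals the multivariate index, i.e.\ $\alpha\cdot\frac{l+\beta}{l}$.

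It remains to identify $\norm{D(N)}_\infty$ with $\sup_{i\in\N}D_i(N)$. This is the only point needing a word of care: as $N$ is a positive integer-valued, hence almost surely finite, random variable, the sequence $D(N)$ almost surely has only finitely many nonzero entries, and all of its entries are non-negative weights $D_i(N)=\deg_i^{\mbox{\scriptsize{in}}}(N)+\beta\geq 0$. Therefore $\norm{D(N)}_\infty=\max_{i\in\N}|D_i(N)|=\sup_{i\in\N}D_i(N)$ almost surely, which gives the claim. I do not expect a genuine obstacle here --- the corollary is essentially immediate from Theorem~\ref{Th:RVsequence} --- the only things to check being the admissibility of the choice $p=\infty$, $\norm{\cdot}=\norm{\cdot}_\infty$ in that theorem and the harmless identification of the $\ell_\infty$-norm with the supremum of the degrees.
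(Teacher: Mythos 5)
Your proof is correct and follows exactly the paper's own argument: apply Theorem~\ref{Th:RVsequence} with $p=\infty$ and $\|\cdot\|=\|\cdot\|_\infty$, then use Remark~\ref{Rem:RVofNorm}. The extra checks you carry out (admissibility of $p=\infty$, the trivial domination $\|\cdot\|_\infty\leq 1\cdot\|\cdot\|_\infty$, and the identification of the $\ell_\infty$-norm with $\sup_i D_i(N)$ since all entries are non-negative and only finitely many are nonzero) are exactly the implicit verifications behind the paper's one-line proof.
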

\begin{proof} This follows from Theorem \ref{Th:RVsequence} applied to $\| \cdot \|_\infty$ and Remark \ref{Rem:RVofNorm}.
\end{proof}
\appendix

\section{Auxiliary Results and Deferred Proofs}\label{sec:appendix}

\subsection{Proof of Proposition \ref{P:martingale_p_factorial} and Uniform Integrability}\label{Sec:MartingaleProof}
For the proof of several lemmata and other auxiliary results we make use of a corollary to Stirling's formula, that can be found as 6.1.46 in \cite{abramowitz84} and reads
\begin{lemma}\label{L:Stirling}
\begin{align*}
\lim_{n\to\infty}n^{b-a}\frac{\Gamma(a+n)}{\Gamma(b+n)}=1\quad\forall a,b\in\R
\end{align*}
\end{lemma}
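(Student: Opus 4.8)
The plan is to derive the limit directly from Stirling's asymptotic formula $\Gamma(x)=\sqrt{2\pi}\,x^{x-1/2}e^{-x}(1+o(1))$ as $x\to\infty$ through real values, applied with $x=n+a$ and with $x=n+b$; for $n$ large enough both arguments are positive, so this is legitimate. Substituting and forming the product $n^{b-a}\Gamma(n+a)/\Gamma(n+b)$, the constants $\sqrt{2\pi}$ cancel, the $o(1)$ errors combine multiplicatively into a factor tending to $1$, and the exponential parts contribute $e^{-(n+a)}/e^{-(n+b)}=e^{b-a}$. What remains is to handle the power factors $\dfrac{(n+a)^{n+a-1/2}}{(n+b)^{n+b-1/2}}$.

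The key step is the bookkeeping of these power factors: writing $(n+c)^{n+c-1/2}=n^{n+c-1/2}\,(1+c/n)^{n+c-1/2}$ for $c\in\{a,b\}$, the $n^{n-1/2}$ parts cancel between numerator and denominator and the leftover powers of $n$ produce a factor $n^{a-b}$, which exactly cancels the prefactor $n^{b-a}$. Hence
\[
n^{b-a}\,\frac{\Gamma(n+a)}{\Gamma(n+b)}=e^{b-a}\cdot\frac{(1+a/n)^{n+a-1/2}}{(1+b/n)^{n+b-1/2}}\cdot(1+o(1)).
\]
Now $(1+c/n)^{n+c-1/2}=\bigl((1+c/n)^n\bigr)\cdot(1+c/n)^{c-1/2}\to e^c\cdot 1=e^c$, so the ratio of the two bracketed terms converges to $e^a/e^b=e^{a-b}$, and the whole right-hand side tends to $e^{b-a}\,e^{a-b}=1$, which is the claim.

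I do not expect any genuine obstacle here — the statement is classical (it is recorded as 6.1.46 in \cite{abramowitz84}) — so the only thing requiring a little care is keeping track of the $n^{n+c-1/2}$ terms when splitting off the powers of $n$, and checking that the multiplicative $o(1)$ from Stirling does not interfere. If one wished to avoid Stirling altogether, an alternative route is to use Wendel's double inequality for $\Gamma(x+b)/\Gamma(x+a)$ on a suitable range of the parameters and then extend to arbitrary real $a,b$ via the recursion $\Gamma(x+1)=x\Gamma(x)$; either approach yields the same conclusion.
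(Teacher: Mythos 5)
Your proof is correct. The paper itself gives no argument for this lemma at all --- it simply cites it as formula 6.1.46 in \cite{abramowitz84} --- so your derivation from Stirling's formula is a self-contained substitute rather than a variant of the paper's approach. The bookkeeping checks out: splitting $(n+c)^{n+c-1/2}=n^{n+c-1/2}(1+c/n)^{n+c-1/2}$ correctly produces the factor $n^{a-b}$ that cancels the prefactor, the exponentials contribute $e^{b-a}$, and $(1+c/n)^{n+c-1/2}\to e^c$ supplies the compensating $e^{a-b}$; the multiplicative $(1+o(1))$ errors from the two applications of Stirling combine harmlessly. You also correctly note the only point needing care for arbitrary real $a,b$, namely that $n+a$ and $n+b$ (and $1+c/n$) are eventually positive so that Stirling's formula applies.
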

This allows us to estimate binomial coefficients by their respective powers and vice versa.
\begin{lemma}\label{L:ineq_mart_power_form}
Under the assumptions of Proposition~\ref{P:martingale_p_factorial}, we have
\[\prod_{i=1}^r\frac{D_i^{k_i}(n)}{\Gamma(k_i+1)}\sim \prod_{i=1}^r\binom{D_i(n)+k_i-1}{k_i}\quad \text{ a.s.}\]
and there exist constants \(C_1,C_2>0\) independent of \(n\) such that almost surely
\begin{align}
C_1\cdot\prod_{i=1}^r\binom{D_i(n)+k_i-1}{k_i}\leq \prod_{i=1}^rD_i^{k_i}(n) \leq C_2\cdot\prod_{i=1}^r\binom{D_i(n)+k_i-1}{k_i}\label{eq:mart_power_form}
\end{align}
\end{lemma}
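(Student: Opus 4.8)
The plan is to prove both the asymptotic equivalence and the two-sided bound in Lemma~\ref{L:ineq_mart_power_form} by reducing everything to Stirling's corollary, Lemma~\ref{L:Stirling}, applied at the (random, but eventually fixed on each realisation) value $x = D_i(n)$. First I would unfold the generalised binomial coefficient as
\[
\binom{D_i(n)+k_i-1}{k_i}=\frac{\Gamma(D_i(n)+k_i)}{\Gamma(k_i+1)\,\Gamma(D_i(n))},
\]
so that the claimed equivalence $\prod_i D_i^{k_i}(n)/\Gamma(k_i+1)\sim\prod_i\binom{D_i(n)+k_i-1}{k_i}$ is, after cancelling the common factors $\Gamma(k_i+1)^{-1}$, equivalent to $\prod_i D_i(n)^{k_i}\sim\prod_i \Gamma(D_i(n)+k_i)/\Gamma(D_i(n))$. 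By Lemma~\ref{L:InftyDegree} each $D_i(n)\to\infty$ almost surely, so on this event Lemma~\ref{L:Stirling} (with $a=k_i$, $b=0$, $n$ replaced by $D_i(n)$) gives $D_i(n)^{-k_i}\Gamma(D_i(n)+k_i)/\Gamma(D_i(n))\to 1$ for each $i$; multiplying the finitely many factors $i=1,\dots,r$ yields the asymptotic equivalence.

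For the two-sided bound~\eqref{eq:mart_power_form}, I would argue that the single-factor ratio
\[
g_i(n):=\frac{\Gamma(D_i(n)+k_i)}{\Gamma(D_i(n))\,D_i(n)^{k_i}}
\]
is, on each realisation, a sequence converging to $1$, hence bounded above and below by positive constants; but since these constants must be uniform in $n$ (and the statement asks for constants independent of $n$), I would instead invoke the fact that the function $x\mapsto \Gamma(x+k_i)/(\Gamma(x)x^{k_i})$ extends continuously to a function on $[x_0,\infty]$ for a suitable lower endpoint $x_0>0$ (using that $D_i(n)\geq \beta>0$ always, or more carefully that $D_i(n)$ takes values in a discrete set bounded below), and a continuous function on a compact set attains positive finite bounds. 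Concretely: $D_i(n)$ only ever takes values in $\{d+\beta : d\in\N_0\}$ (or in the analogous shifted lattice coming from the initial weights), a set bounded below by some $x_0>0$; on the one-point compactification the map $x\mapsto g_i$ is continuous with limit $1$ at $\infty$, hence bounded by constants $0<c_i\le g_i\le c_i'$ uniformly over all attainable values of $D_i(n)$. Taking products over $i=1,\dots,r$ gives $C_1:=\prod_i c_i>0$ and $C_2:=\prod_i c_i'$, which yields~\eqref{eq:mart_power_form} after reinserting the $\Gamma(k_i+1)$ factors cancels them on both sides, so the bound is stated directly between $\prod_i D_i^{k_i}(n)$ and $\prod_i\binom{D_i(n)+k_i-1}{k_i}$.

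The main obstacle I anticipate is the uniformity in $n$: a naive application of Lemma~\ref{L:Stirling} only gives that $g_i(n)\to 1$ pathwise, which does not immediately deliver $n$-independent constants, and moreover one must be slightly careful that $D_i(n)$ could in principle be small (though $\geq\beta>0$) for small $n$, so there is no single asymptotic regime. The clean fix is the compactness/continuity argument above, exploiting that the set of possible values of $D_i(n)$ is a fixed discrete subset of $[x_0,\infty)$, so that $g_i$ ranges over a relatively compact set of reals bounded away from $0$ and $\infty$; alternatively one can simply split the (countably many, but on each path eventually large) values of $D_i(n)$ into a finite "small" part and a "large" part handled by Lemma~\ref{L:Stirling}. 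Everything else — unfolding the binomial coefficient, cancelling $\Gamma(k_i+1)$, and multiplying the $r$ factors — is routine.
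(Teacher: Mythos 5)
Your proof is correct and follows the same essential route as the paper: unfold the generalised binomial coefficient as $\Gamma(D_i(n)+k_i)/(\Gamma(k_i+1)\Gamma(D_i(n)))$, cancel the $\Gamma(k_i+1)$ factors, apply Lemma~\ref{L:Stirling} to the single-factor ratio, and multiply over $i=1,\dots,r$. You are also right to flag the uniformity issue: Lemma~\ref{L:Stirling} by itself only gives a pathwise asymptotic statement, and the constants $C_1,C_2$ really do need to be independent of both $n$ and $\omega$ (the lemma is later used inside expectations, e.g.\ in the uniform-integrability step of Proposition~\ref{P:martingale_p_factorial}, 3), and in Proposition~\ref{P:BreiCond}); your observation that $D_i(n)$ ranges over a fixed discrete set bounded away from $0$, on which $x\mapsto\Gamma(x+k_i)/(\Gamma(x)x^{k_i})$ is continuous, positive and tends to $1$, is precisely the bridge the paper's own proof leaves implicit when it passes from "by Lemma~\ref{L:Stirling}" directly to the existence of such constants.
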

\begin{proof}
Remember from Lemma~\ref{L:InftyDegree} that almost surely \(D_i(n)\to\infty\) as \(n\to\infty\). Thus, by Lemma \ref{L:Stirling} we can find for each $i=1, \ldots, r$ constants $0<C^i_1<C_2^i$ (depending on $k_i$) such that 
\begin{align*}
C_1^i \binom{D_i(n)+k_i-1}{k_i}
= C_1^i \frac{\Gamma(D_i(n)+k_i)}{\Gamma(k_i+1)\Gamma(D_i(n))}
\leq  \frac{D_i^{k_i}(n)}{\Gamma(k_i+1)} \leq C_2^i \binom{D_i(n)+k_i-1}{k_i} 
\end{align*}
holds for all $n \geq (r-s) \vee 0$. 
Combining those constants leads to \eqref{eq:mart_power_form}. 
\end{proof}
Now to Proposition \ref{P:martingale_p_factorial}. The following proof adapts a martingale approach from \cite{mori05} (Theorem 2.1).
\begin{proof}[Proof of Proposition~\ref{P:martingale_p_factorial}]\(\)
\begin{enumerate}[label=\arabic*)]
\item
We call the total ball count at time \(m\) in the corresponding urn model
\[S_m:=z+m+\beta\cdot\Bigl\lfloor\frac{m}{l}\Bigr\rfloor\quad \text{with }z:=\sum_{i=1}^{N(0)}C_i(0)+j\cdot \beta.\]
Only one colour obtains a new ball at a time, so
\begin{align*}
&\E\Bigl(\prod_{i=1}^r\binom{C_i(m+1)+k_i-1}{k_i}\Big|\Fscr_m\Bigr)\\
&=\sum_{j=1}^r\E\Bigl(\underset{i\neq j}{\prod_{i=1}^r}\binom{C_i(m)+k_i-1}{k_i}\binom{C_j(m)+k_j}{k_j}\cdot\1_{\{C_j(m+1)=C_j(m)+1\}}\Big|\Fscr_m\Bigr)\\
&\;\;\;+\E\Bigl(\prod_{i=1}^r\binom{C_i(m)+k_i-1}{k_i}\cdot\1_{\bigcap_{j=1}^r\{C_j(m+1)=C_j(m)\}}\Big|\Fscr_m\Bigr)\\
\intertext{and using the transition probabilities of the urn model this equals}
& \;\;\; \sum_{j=1}^r\underset{i\neq j}{\prod_{i=1}^r}\binom{C_i(m)+k_i-1}{k_i}\cdot \binom{C_j(m)+k_j-1}{k_j}\cdot\frac{C_j(m)+k_j}{C_j(m)}\cdot\frac{C_j(m)}{S_m}\\
&\;\;\; +\prod_{i=1}^r\binom{C_i(m)+k_i-1}{k_i}\cdot \Bigl(1-\sum_{j=1}^r\frac{C_j(m)}{S_m}\Bigr)\\
&=\prod_{i=1}^r\binom{C_i(m)+k_i-1}{k_i}\Bigl(\sum_{j=1}^r\frac{C_j(m)+k_j}{S_m}+1-\sum_{j=1}^r\frac{ C_j(m)}{S_m}\Bigr)\\
&=\prod_{i=1}^r\binom{C_i(m)+k_i-1}{k_i}\frac{S_m+k}{S_m}.
\end{align*}
Now consider times \(m=n\cdot l\). These are the critical times where a new colour is migrated into the urn (a new vertex is added to the graph). In between nothing of interest happens and we can iterate the above calculation \(l\) times to obtain:
\[\E\Bigl(\prod_{i=1}^r\binom{C_i((n+1)\cdot l)+k_i-1}{k_i}\Big|\Fscr_{n\cdot l}\Bigr)=\prod_{i=1}^r\binom{C(n\cdot l)+k_i-1}{k_i}\prod_{i=0}^{l-1}\frac{S_{n\cdot l+i}+k}{S_{n\cdot l+i}}.\]
In the next step we see that
\begin{align*}
&\prod_{i=0}^{l-1}\frac{S_{n\cdot l+i}+k}{S_{n\cdot l+i}}=\prod_{i=0}^{l-1}\frac{z+n\cdot l+i+\beta\cdot n+k}{z+n\cdot l+i+\beta\cdot n}=\prod_{i=0}^{l-1}\frac{\frac{z+k+i}{l+\beta}+n}{\frac{z+i}{l+\beta}+n}\\
&=\biggl(\prod_{i=0}^{l-1}\frac{\Gamma(\frac{z+k+i}{l+\beta}+n+1)}{\Gamma(\frac{z+i}{l+\beta}+n+1)}\biggr)\Big/\biggl(\prod_{i=0}^{l-1}\frac{\Gamma(\frac{z+k+i}{l+\beta}+n)}{\Gamma(\frac{z+i}{l+\beta}+n)}\biggr)=:\frac{c(n+1,k)}{c(n,k)}
\end{align*}
and the martingale property follows. Lastly we apply Lemma \ref{L:Stirling} to each factor of \(c(n,k)\) to get
\begin{equation}\label{Eq:c(n,k):equiv}c(n,k)=\prod_{i=0}^{l-1}\frac{\Gamma(\frac{z+k+i}{l+\beta}+n)}{\Gamma(\frac{z+i}{l+\beta}+n)}\sim \prod_{i=0}^{l-1}n^{k/(l+\beta)}=n^{k\cdot l/(l+\beta)}.
\end{equation}
\item Doob's martingale convergence theorem (see Chapter XI.14 in \cite{doob94}) guarantees the existence of an almost sure limit of the martingale given in \eqref{term:process_p_factorial} and that this limit is in $L^1(\Pp)$. Lemma~\ref{L:ineq_mart_power_form} in combination with \eqref{Eq:c(n,k):equiv} then ensures the almost sure convergence of the sequence $n^{-k_i l/(l+\beta)}D_i(n)^{k_i}$ to a limit $\zeta_i^{k_i} \in L^1(\Pp)$ and a further application of Lemma~\ref{L:ineq_mart_power_form} implies that the expression in \eqref{term:limit_p_factorial} is indeed the limit of the process \eqref{term:process_p_factorial}.
\item
Doob's martingale convergence theorem (Chapter XI.14 in \cite{doob94}) yields that (\ref{term:limit_p_factorial}) is the right closure to the martingale (\ref{term:process_p_factorial}) if the latter is uniformly integrable. To show this uniform integrability we show that the process is $L^{1+\epsilon}$-bounded for any $\epsilon>0$. To this end, set \(p_i:=k_i(1+\epsilon)\) and \(p:=k(1+\epsilon)=p_1+\dots+p_r\). By Lemma \ref{L:ineq_mart_power_form} there exist constants \(C_1,C_2>0\) independent of \(n\) such that
\begin{align*}
\Bigl[\frac{1}{c(n,k)}\prod_{i=1}^r\binom{D_i(n)+k_i-1}{k_i}\Bigr]^{1+\epsilon}&\leq C_1\cdot\frac{\prod_{i=1}^rD^{p_i}_i(n)}{n^{p\cdot l/(l+\beta)}}\\
&\leq C_2\cdot \frac{1}{c(n,p)}\prod_{i=1}^r\binom{D_i(n)+p_i-1}{p_i}.
\end{align*}
Since the right-hand side is a multiple of a martingale, it has bounded expectation and the uniform integrability of (\ref{term:process_p_factorial}), and thus the first part of 3), follows.

This now allows us to iteratively trace back the expectation of \(\prod_{i=1}^r\zeta_i^{k_i}\) to the times at which colour $i, 1 \leq i \leq r,$ was first introduced to the urn, that is $n_0^i:=l(i-s) \vee 0$, and the number of balls was still deterministic, namely either \(\beta\) if \(i>N(0)\) or \(D_i(0)\) if \(i\leq N(0)\):
\begin{align*}
&\E\Bigl(\prod_{i=1}^r\frac{\zeta_i^{k_i}}{\Gamma(k_i+1)}\Bigr)=\E\Bigl(\E\Bigl(\prod_{i=1}^r\frac{\zeta_i^{k_i}}{\Gamma(k_i+1)}\Big|\Fscr_{n_0^r}\Bigr)\Bigr)\\
&=\E\Bigl(\frac{1}{c(n_0^r/l,k_1+\dots+k_r)}\prod_{i=1}^r\binom{D_i(n_0^r/l)+k_i-1}{k_i}\Bigr)\\
&=\frac{c(n_0^r/l,k_1+\dots+k_{r-1})}{c(n_0^r/l,k_1+\dots+k_r)}\cdot\begin{cases}\binom{D_r(0)+k_r-1}{k_r}\quad &\text{if }r\leq N(0)\\\binom{\beta+k_r-1}{k_r}\quad &\text{if }r> N(0)\end{cases}\\
&\phantom{=}\cdot\E\Bigl(\E\Bigl(\frac{1}{c(n_0^r/l,k_1+\dots+k_{r-1})}\prod_{i=1}^{r-1}\binom{D_i(n_0^r/l)+k_i-1}{k_i}\Big|\Fscr_{n_0^{r-1}}\Bigr)\Bigr)\\
&=\prod_{i=2}^{r}\frac{c(n_0^i/l,k_1+\dots+k_{i-1})}{c(n_0^i/l,k_1+\dots+k_i)}\cdot\frac{1}{c(n_0^1/l,k_1)}\prod_{i=1}^r\begin{cases}\binom{D_i(0)+k_i-1}{k_i}\quad &\text{if }i\leq N(0)\\\binom{\beta+k_i-1}{k_i}\quad &\text{if }i> N(0).\end{cases}
\end{align*}
\end{enumerate}
\end{proof}

We end this section with a uniform integrability property that will turn out to be useful on multiple occasions. 
\begin{cor}\label{L:uniformInt}
Under the assumptions of and with the notation from Proposition~\ref{P:martingale_p_factorial} the process \(\bigl(n^{-\frac{k\cdot l}{l+\beta}}\prod_{i=1}^rD_i^{k_i}(n)\bigr)_n\) is uniformly integrable.
\end{cor}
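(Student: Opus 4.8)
The plan is to obtain the uniform integrability of $\bigl(n^{-\frac{k\cdot l}{l+\beta}}\prod_{i=1}^r D_i^{k_i}(n)\bigr)_n$ by comparison with the martingale from Proposition~\ref{P:martingale_p_factorial}. Write $M_n:=\frac{1}{c(n,k)}\prod_{i=1}^r\binom{D_i(n)+k_i-1}{k_i}$ for the positive martingale of part~1), which in the proof of part~3) was shown to be $L^{1+\epsilon}$-bounded for every $\epsilon>0$ and hence uniformly integrable. The idea is simply that our process is, up to a uniformly bounded deterministic factor, dominated by $M_n$.

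For the comparison, first invoke \eqref{eq:mart_power_form} from Lemma~\ref{L:ineq_mart_power_form}: there is a constant $C_2>0$, not depending on $n$, with $\prod_{i=1}^r D_i^{k_i}(n)\le C_2\prod_{i=1}^r\binom{D_i(n)+k_i-1}{k_i}$ almost surely. Second, by \eqref{Eq:c(n,k):equiv} we have $c(n,k)\sim n^{k l/(l+\beta)}$, so the deterministic ratio $c(n,k)\,n^{-k l/(l+\beta)}$ converges and is in particular bounded by some $C_3<\infty$ uniformly over all $n$ past the first time at which all $r$ colours are present. Multiplying the two bounds gives
\[ n^{-\frac{k l}{l+\beta}}\prod_{i=1}^r D_i^{k_i}(n)\ \le\ C_2 C_3\,\frac{1}{c(n,k)}\prod_{i=1}^r\binom{D_i(n)+k_i-1}{k_i}\ =\ C_2 C_3\,M_n \qquad\text{a.s.} \]
Finally, one uses the elementary fact that a nonnegative family which is almost surely dominated by a uniformly integrable family is itself uniformly integrable: if $0\le Y_n\le Z_n$, then $\{Y_n>\lambda\}\subseteq\{Z_n>\lambda\}$ and hence $\E(Y_n\1_{\{Y_n>\lambda\}})\le\E(Z_n\1_{\{Z_n>\lambda\}})$, which tends to $0$ uniformly in $n$ as $\lambda\to\infty$ by the uniform integrability of $\{Z_n\}$. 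Applying this with $Y_n$ our process and $Z_n=C_2 C_3 M_n$ finishes the argument. Alternatively one can bypass the comparison and repeat the $L^{1+\epsilon}$-estimate of part~3) verbatim with exponents $p_i:=k_i(1+\epsilon)$: the martingale property of $\frac{1}{c(n,p)}\prod_i\binom{D_i(n)+p_i-1}{p_i}$ yields $\E\bigl(\prod_i D_i^{p_i}(n)\bigr)\le \mathrm{const}\cdot c(n,p)\sim\mathrm{const}\cdot n^{p l/(l+\beta)}$, so that $\sup_n\E\bigl((n^{-k l/(l+\beta)}\prod_i D_i^{k_i}(n))^{1+\epsilon}\bigr)<\infty$ and uniform integrability follows from the de la Vallée Poussin criterion.

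The argument is essentially routine and I do not expect a substantial obstacle. The only minor point requiring care is the treatment of the finitely many early time points $n< (r-s)\vee 0$, where some $D_i(n)=0$ and neither the normalisation $c(n,k)$ nor the asymptotics of Lemma~\ref{L:ineq_mart_power_form} are yet in force; this is harmless, since any finite collection of integrable random variables is automatically uniformly integrable and can be discarded from the supremum.
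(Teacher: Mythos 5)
Your argument is correct and is precisely the one the paper intends: Lemma~\ref{L:ineq_mart_power_form} together with \(c(n,k)\sim n^{kl/(l+\beta)}\) dominates the process by a constant multiple of the uniformly integrable martingale from Proposition~\ref{P:martingale_p_factorial}, parts~1) and~3). Both your primary comparison argument and your alternative \(L^{1+\epsilon}\)-bound route are consistent with the paper's terse proof, which merely cites these same ingredients.
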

\begin{proof}
This follows from the uniform integrability of the martingale \eqref{term:process_p_factorial} shown in Proposition~\ref{P:martingale_p_factorial} \textit{1)} and \textit{3)} together with Lemma~\ref{L:ineq_mart_power_form}. 
\end{proof} 

\subsection{Proof of Theorem~\ref{Breiman_generalised}}\label{Sec:BreimanProof}
We adapt the proof from Theorem 3 in \cite{wang21} to the case of a general separable Banach space. By our additional assumption about the continuity of $(X_t)$ we are able to straighten a previously not properly addressed subtlety in the proof about the application of Egorov's theorem. 

\begin{proof}[Proof of Theorem~\ref{Breiman_generalised}] By a Portmanteau theorem for $\mathbb{M}$-convergence, see Theorem 2.1 in \cite{lindskog14}, it is sufficient to show 
\begin{equation}\label{Eq:ToShowmain}\lim_{t\to\infty}t\E f\Bigl(\frac{X_T T}{b(t)}\Bigr)=\int_{0}^\infty\E f(X_\infty y)\,\nu_\alpha(\dint y)<\infty,\end{equation}
for all non-negative, bounded and uniformly continuous functions $f$ on \(B\) whose support is bounded away from $0$, i.e. there exists an \(\epsilon_0>0\) such that \(f(x)=0\) for all $x \in B_{\epsilon_0}(0)$. Without loss of generality we will assume that $f$ is bounded by 1.
In order to show \eqref{Eq:ToShowmain} we write, for $\eta, M >0$,
\begin{align*}
&\Bigl|t\E f\Bigl(\frac{X_TT}{b(t)}\Bigr)-\int_0^\infty\E f(X_\infty y)\nu_\alpha(\dint y)\Bigr|\\
&\leq\Bigl|t\E\Bigl(f\Bigl(\frac{X_TT}{b(t)}\Bigr)\1_{ \{M \geq T/b(t)\geq\eta\}}\Bigr)-\int_\eta^M\E f(X_\infty y)\nu_\alpha(\dint y)\Bigr| \\
&+t\E\Bigl(f\Bigl(\frac{X_TT}{b(t)}\Bigr)\1_{\{T/b(t)<\eta\}}\Bigr) +\int_0^\eta\E f(X_\infty y)\nu_\alpha(\dint y) \\
&+t\E\Bigl(f\Bigl(\frac{X_TT}{b(t)}\Bigr)\1_{\{T/b(t)>M\}}\Bigr) +\int_M^\infty\E f(X_\infty y)\nu_\alpha(\dint y) \\
&=: I^1(t,\eta,M)+I^2(t,\eta)+I^3(t,M).
\end{align*}
\begin{enumerate}
\item We start by showing $I^1(t,\eta,M) \to 0, t \to \infty.$ To this end, first fix some \(\epsilon>0\). By Egorov's theorem there exists a measurable set \(A_\epsilon\) with \(\Pp(A_\epsilon^c)<\epsilon\) such that \(X_t\to X_\infty\) for \(t\to\infty\) uniformly on \(A_\epsilon\). At this point we use the one-sided continuity of \((X_t)\) as a sufficient condition to ensure the measurability of \(A_\epsilon\). This allows us to bound \(I^1(t,\eta,M)\) separately on \(A_\epsilon\) and \(A_\epsilon^c\):
\begin{align*}
I^1(t,\eta,M)&\leq t\E\Bigl(f\Bigl(\frac{X_TT}{b(t)}\Bigr)\1_{A_\epsilon^c \cap \{M \geq T/b(t)\geq\eta\}}\Bigr)+\int_\eta^M\E \Bigl(f(X_\infty y)\1_{A_\epsilon^c}\Bigr) \nu_\alpha(\dint y) \\
& + t\E\Bigl(\Bigl|f\Bigl(\frac{X_T T}{b(t)}\Bigr)-f\Bigl(\frac{X_\infty T}{b(t)}\Bigr)\Bigr|\1_{A_\epsilon\cap\{M \geq T/b(t)\geq\eta\}}\Bigr)\\
& + \Bigl|t\E\Bigl(f\Bigl(\frac{X_\infty T}{b(t)}\Bigr)\1_{A_\epsilon\cap\{M \geq T/b(t)\geq\eta\}}\Bigr)-\int_\eta ^M \E\Bigl( f(X_\infty y)\1_{A_\epsilon}\Bigr) \nu_\alpha(\dint y)\Bigr|\\
&=: I_a^1(t,\eta,M)+I_b^1(t,\eta,M)+I_c^1(t,\eta,M)+I_d^1(t,\eta,M).
\end{align*}
Starting with $I_a^1(t,\eta,M)$ we use that \(A_\epsilon^c \in\sigma(X_t,t\geq 0)\) which therefore is independent of \(T\):
\begin{align*}
\limsup_{t \to \infty} I_a^1(t,\eta,M) \leq \limsup_{t \to \infty}  t\Pp\Bigl(A_\epsilon^c \cap\Bigl\{\frac{T}{b(t)}\geq \eta \Bigr\}\Bigr)\leq \epsilon\cdot\eta^{-\alpha}.
\end{align*}
Second,
\begin{align*}
I_b^1(t,\eta,M) \leq \Pp (A_\epsilon^c)\int_\eta^M \nu_\alpha(\dint y) \leq \epsilon \cdot (\eta^{-\alpha}-M^{-\alpha}).
\end{align*}
For $I_c^1(t,\eta,M)$, we note that \(b(t)\to\infty\) for \(t\to\infty\), yielding an arbitrarily large lower bound for \(T\) on the set \(\{T/b(t)\geq\eta\}\). Then, intersecting with \(A_\epsilon\) we obtain that \(b(t)^{-1} T \norm{X_T - X_\infty }\cdot\1_{A_\epsilon\cap\{M \geq T/b(t)\geq\eta\}}\) uniformly tends to \(0\) and combining this with the uniform continuity of \(f\) we find a \(c(t)\) with \(c(t)\to 0\) for \(t\to\infty\) such that, for $t \to \infty$,
\begin{align*}
I_c^1(t,\eta,M)&\leq c(t)\cdot t\Pp\Bigl(A_\epsilon\cap\Bigl\{M \geq \frac{T}{b(t)}\geq \eta \Bigr\}\Bigr)\leq c(t)\cdot t\Pp\Bigl(\frac{T}{b(t)}\geq\eta \Bigr)\to 0.
\end{align*}
Finally, for $I_d^1(t,\eta,M)$ we observe that 
$$ y \mapsto E(f(X_\infty y)\mathds{1}_{A_\epsilon}) \mathds{1}_{[\eta,M]}(y)$$ 
is bounded and has support bounded away from 0. Even though it is not continuous in $y$, a standard approximation argument by continuous functions combined with dominated convergence gives
\begin{eqnarray*}
  &&  t\E\Bigl(f\Bigl(\frac{X_\infty T}{b(t)}\Bigr)\1_{A_\epsilon\cap\{M \geq T/b(t)\geq\eta\}}\Bigr) \\
   & = &  \int_\eta^M \E(f(X_\infty y )\1_{A_\epsilon})t\Pp^{T/b(t)}(\dint y)  \to \int_\eta ^M \E (f(X_\infty y)\1_{A_\epsilon}) \nu_\alpha(\dint y)
\end{eqnarray*}
for $t \to \infty$, where we used regular variation of $T$ in combination with Fubini's theorem as $T$ is independent of $X_\infty$. Thus, $I_d(t,\eta,M) \to 0$ for  $t \to \infty$.
From the above and since we can choose $\epsilon>0$ arbitrarily small, we see that $I^1(t,\eta,M) \to 0, t \to \infty$.
\item We split up $I^2(t,\eta)$ into 
$$ t\E\Bigl(f\Bigl(\frac{X_TT}{b(t)}\Bigr)\1_{\{T/b(t)<\eta\}}\Bigr) +\int_0^\eta\E f(X_\infty y)\nu_\alpha(\dint y)=:I^2_a(t,\eta)+I^2_b(\eta).$$
Then, using Markov's inequality as well as the independence of \((X_t)_{t\geq 0}\) and \(T\), we arrive at the upper bound
\begin{align*}
&I^2_a(t,\eta) \leq t \Pp\Bigl(\Big\|\frac{X_TT}{b(t)}\Bigr\|>\epsilon_0,\frac{T}{b(t)}<\eta\Bigr)\\
&=t\Pp\Bigl(\Big\|\frac{X_TT}{b(t)}\1_{\{T/b(t)<\eta\}}\Bigr\|>\epsilon_0 \Bigr)\leq  \epsilon_0^{-\alpha'}t\E\Bigl(\Big\|\frac{X_TT}{b(t)}\1_{\{T/b(t)<\eta\}}\Bigr\|^{\alpha'}\Bigr)\\
&= \epsilon_0^{-\alpha'}\int_{(0,\eta)}\E\norm{X_{b(t)y}}^{\alpha'} y^{\alpha'}t\Pp(T/b(t)\in\dint y)\\
&\leq \epsilon_0^{-\alpha'}  \sup_{t\geq 0}\E\norm{X_t}^{\alpha'} t\E\Bigl(\Bigl(\frac{T}{b(t)}\Bigr)^{\alpha'}\1_{\{T/b(t)<\eta\}}\Bigr)
\end{align*}
By Karamata's Theorem applied to truncated moments of regularly varying random variables, see Proposition 1.4.6 in \cite{kulik20}, we get
\begin{align*}
 & t\E\Bigl(\Bigl(\frac{T}{b(t)}\Bigr)^{\alpha'}\1_{\{T/b(t)<\eta\}}\Bigr) \\
 &= t\Pp(T>b(t))\frac{\E\Bigl(T^{\alpha'}\mathds{1}_{\{T<\eta b(t)\}}\Bigr)}{(b(t))^{\alpha'}\Pp(T>b(t))} \\
 &\to \frac{\alpha}{\alpha'-\alpha}\eta^{\alpha'-\alpha}.
\end{align*}
Thus, for $t \to \infty, \eta \to 0$, $I^2_a(t,\eta) \to 0$. 

In order to bound $I^2_b(\eta)$ we first note that \(\E\norm{X_\infty}^{\beta}\) is finite for all $\alpha<\beta<\alpha'$ because \((\norm{X_t}^\beta)_t\) is uniformly integrable (as \(\sup_t\E\bigl[(\norm{X_t}^\beta)^{\frac{\alpha'}{\beta}}\bigr]<\infty\) and \(\frac{\alpha'}{\beta}>1\)) and the Vitali convergence theorem implies that \(\norm{X_t}^\beta\) converges in \(L^1\) to \(\norm{X_\infty}^\beta\) and so do the expected values. Then, again by Markov's inequality we get
\begin{align*}
& \int_0^\eta\E f(X_\infty y)\nu_\alpha(\dint y) \leq \int_0^\eta \Pp(\|X_\infty\|> \epsilon_0 /y)\nu_\alpha(\dint y) \\
&\leq \int_0^\eta \E\bigl(\|X_\infty\|^\beta\bigr) \epsilon_0^{-\beta} y^\beta \nu_\alpha(\dint y) \\
&\leq \E\bigl(\|X_\infty\|^\beta\bigr) \epsilon_0^{-\beta} \frac{\alpha}{\beta-\alpha}\eta^{\beta-\alpha}.
\end{align*}
Again, with $\eta \to 0$ we have shown that $I^2_b(\eta) \to 0$ and therefore $I^2(t,\eta) \to 0$ and also that the right hand side in \eqref{Eq:ToShowmain} is finite, since $\int_\eta^\infty \E f(X_\infty y)\nu_\alpha(\dint y) \leq \eta^{-\alpha}$.
\item Finally, regarding $I^3(t,M)$ we note that 
\begin{align*}
 & \limsup_{t \to \infty} I^3(t,M) \leq \limsup_{t \to \infty} t\Pp(T>M b(t)) + \int_M^\infty \nu_\alpha (dy) \leq 2 M^{-\alpha},
\end{align*}
and so $\lim_{M \to \infty} \limsup_{t \to \infty} I^3(t,M)=0$. 
This finishes the proof of \eqref{Eq:ToShowmain}.
\end{enumerate}
\end{proof}
\begin{remark}
One could do without the continuity assumption of \((X_t)\) provided the measurability of all sets in the proof is ensured. In particular, this refers to the \(A_\epsilon\) from Egorov's theorem.
\end{remark}

\subsection{Moment conditions}
The following statement extends Lemma~4.1 in \cite{pekoz16} to arbitrary values of $\beta \geq 0$.

\begin{lemma}\label{L:MomentCond}
Assume the urn model of Section~\ref{sec:InCoUrn}, fix $k \in \mathbb{N}$ and let
\[U_k(n):=C_1(n)+\dots+C_k(n)\]
denote the total number of balls of colours 1 to k at time \(n\). Then for every \(q\in\N\) there exist \(c,C>0\) such that for all \(n\in\N_0\)
\begin{equation} \label{eq:MomentCond}
cn^{ql/(l+\beta)}<\E(U_k(n)^q)<Cn^{ql/(l+\beta)}.
\end{equation}
\end{lemma}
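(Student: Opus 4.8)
The plan is to transfer the statement to the graph model via Lemma~\ref{GraphUrnDuality}, expand the $q$-th power of $U_k$ into mixed moments of the weights $D_1,\dots,D_k$, and control each of these with the martingale from Proposition~\ref{P:martingale_p_factorial}.

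First I would reduce to urn times that are multiples of $l$. Since the ball counts $C_i(\cdot)$, and hence $U_k(\cdot)$, are nondecreasing in $n$, we have $U_k(l\lfloor n/l\rfloor)\le U_k(n)\le U_k(l\lceil n/l\rceil)$, and as $l\lfloor n/l\rfloor\asymp n\asymp l\lceil n/l\rceil$ for $n\ge l$, it suffices to prove $\E(U_k(ml)^q)\asymp m^{ql/(l+\beta)}$ with constants independent of $m\in\N$. For $m>(k-s)\vee 0$ --- that is, once colours $1,\dots,k$ are all present and the finitely many immigration events that could enlarge $U_k$ (those inserting a colour $s+m'\le k$) have all occurred --- Lemma~\ref{GraphUrnDuality} identifies $U_k(ml)=D_1(m)+\dots+D_k(m)$, the sum of the weights of the $k$ oldest vertices of the graph. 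The finitely many remaining small values of $n$ only contribute bounded, strictly positive expectations, so they can be absorbed into $c$ and $C$.

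Next I would expand by the multinomial theorem,
\[\E\bigl(U_k(ml)^q\bigr)=\sum_{a_1+\dots+a_k=q}\binom{q}{a_1,\dots,a_k}\,\E\Bigl(\prod_{i=1}^{k}D_i(m)^{a_i}\Bigr),\]
which is a sum of finitely many terms. The crucial point is that every exponent vector has the same total $\sum_i a_i=q$, so each term should scale like $m^{ql/(l+\beta)}$. For a fixed $(a_1,\dots,a_k)$, Proposition~\ref{P:martingale_p_factorial}, \ref{P:martingale_p_factorial:martingale}, tells us that $c(m,q)^{-1}\prod_i\binom{D_i(m)+a_i-1}{a_i}$ is a positive martingale, so $\E\bigl(\prod_i\binom{D_i(m)+a_i-1}{a_i}\bigr)=\gamma\cdot c(m,q)$ for some constant $\gamma=\gamma(a_1,\dots,a_k)>0$ and all admissible $m$. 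Combining this with the two-sided bound $C_1\prod_i\binom{D_i(m)+a_i-1}{a_i}\le\prod_i D_i^{a_i}(m)\le C_2\prod_i\binom{D_i(m)+a_i-1}{a_i}$ from Lemma~\ref{L:ineq_mart_power_form}, whose constants do \emph{not} depend on $m$, and with $c(m,q)\sim m^{ql/(l+\beta)}$ from \eqref{Eq:c(n,k):equiv} (which, together with positivity and finiteness at the finitely many small $m$, upgrades to a two-sided bound $c(m,q)\asymp m^{ql/(l+\beta)}$ uniform in $m$), yields $\E\bigl(\prod_i D_i^{a_i}(m)\bigr)\asymp m^{ql/(l+\beta)}$ uniformly. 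Summing the finitely many multinomial terms gives $\E(U_k(ml)^q)\asymp m^{ql/(l+\beta)}$, and undoing the first reduction finishes the proof. (Alternatively one can stay in the urn and induct on $q$, using the one-step identity $\E(U_k(n+1)^q\mid\Fscr_n)=U_k(n)^q(1+q/S_n)+S_n^{-1}\sum_{i=1}^{q-1}\binom{q}{i-1}U_k(n)^i$ coming from the urn transitions, a discrete variation-of-constants formula, and convergence of $\sum_j j^{-1-l/(l+\beta)}$.)

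The main obstacle I anticipate is bookkeeping rather than anything deep: making the comparison constants genuinely uniform over \emph{all} $n\in\N_0$. This hinges on the ``independent of $n$'' clause in Lemma~\ref{L:ineq_mart_power_form}, on promoting \eqref{Eq:c(n,k):equiv} to a two-sided bound valid for every $n\ge1$ (handling the finitely many small $n$ via positivity and finiteness of the relevant expectations), on correctly accounting for the finitely many immigration steps that raise $U_k$ by $\beta$ when $k>s$, and --- in the degenerate boundary case $\beta=0$ --- on restricting to the colours that actually grow (the standing assumption $\beta>0$ from Lemma~\ref{GraphUrnDuality} avoids this last point). With all of that in place, the lower bound follows from keeping a single multinomial term (all terms being nonnegative) and the upper bound from summing them all.
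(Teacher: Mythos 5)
Your proof is correct, but it takes a genuinely different route from the paper's. The paper works directly in the urn model with the \emph{aggregated} count $U_k$: it introduces the rising factorial $M_{p,q}=\prod_{i=0}^{q-1}(i+U_k(n_p))$, derives the exact one-step recursion $\E(M_{p,q}\mid U_k(n_{p-1}))=M_{p-1,q}(1+q/h_{p-1})$ from the transition probabilities (in effect re-discovering the pooled two-colour martingale), iterates to get $\E M_{p,q}=M_{0,q}\prod_{i<p}(1+q/h_i)$, and then bounds the deterministic product via Stirling (Lemma~\ref{L:Stirling}); the lower bound comes from Jensen's inequality applied to $\E M_{p,1}$. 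You instead pass to the graph model, expand $U_k(ml)^q$ by the multinomial theorem, and control each mixed moment $\E\bigl(\prod_i D_i^{a_i}(m)\bigr)$ by the fact that $c(m,q)^{-1}\prod_i\binom{D_i(m)+a_i-1}{a_i}$ has constant expectation (Proposition~\ref{P:martingale_p_factorial}, 1)), together with the two-sided comparison of products of powers and binomials from Lemma~\ref{L:ineq_mart_power_form}; the lower bound is obtained by discarding all but one nonnegative term. Both arguments reduce to the same Stirling asymptotics $c(m,q)\asymp m^{ql/(l+\beta)}$. The paper's argument is self-contained, needing only the urn transition rule and not the full multi-colour martingale, and it also visibly works for $\beta=0$ (the lemma is announced as extending \cite{pekoz16} to $\beta\ge 0$); your argument piggybacks on Proposition~\ref{P:martingale_p_factorial} and Lemma~\ref{L:ineq_mart_power_form} — which is legitimate here, since those are proved independently of Lemma~\ref{L:MomentCond}, so there is no circularity — but inherits their standing assumption $\beta>0$, a restriction you correctly flag. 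Both routes are valid; yours is a clean reuse of already-built machinery, while the paper's is more elementary and slightly more general in $\beta$.
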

\begin{proof}
Let $k \in \mathbb{N}$ be fixed throughout this proof. Note first that $U_k(n), n \in \mathbb{N}_0,$ is deterministic until there are at least $k+1$ different colours in the urn, i.e.\ for $n \leq n_0$ with
\[n_0:=l(k+1-s) \vee 0\] 
and set
\[n_p:=n_0+p,\,p\in\N.\]
It is then sufficient to show \eqref{eq:MomentCond} for $n=n_p, p \in \mathbb{N},$ as for those finitely many \(n \leq n_0\) there exist trivial bounds.
Let \(h_p=U_{(k+1)\vee s}(n_0)+p+\beta\floor{\frac{p}{l}}\) be the (deterministic) total number of balls at time \(n_p\) and define \begin{equation} M_{p,q}:=\prod_{i=0}^{q-1}(i+U_k(n_p)), p \in \mathbb{N}_0, q \in \mathbb{N}.\end{equation} We start the proof by showing that
\begin{equation}\label{eq:RepDp}
\E M_{p,q}=M_{0,q}\cdot\prod_{i=0}^{p-1}\Bigl(1+\frac{q}{h_i}\Bigr).
\end{equation}
Given the value \(U_k(n_{p-1})\), there are only two possible values for \(U_k(n_p)\), which are \(U_k(n_{p-1})+1\) and \(U_k(n_{p-1})\): Either we draw a ball of colours 1 to $k$ or we do not. Using this, one gets:
\begin{align*}
\E(M_{p,q}|U_k(n_{p-1}))&=\frac{U_k(n_{p-1})}{h_{p-1}}\prod_{i=0}^{q-1}(i+U_k(n_{p-1})+1)+\frac{h_{p-1}-U_k(n_{p-1})}{h_{p-1}}M_{p-1,q}\\
&= \frac{U_k(n_{p-1})}{h_{p-1}}\prod_{i=1}^{q}(i+U_k(n_{p-1}))+\frac{h_{p-1}-U_k(n_{p-1})}{h_{p-1}}M_{p-1,q}\\
&=\frac{U_k(n_{p-1})}{h_{p-1}}\frac{M_{p-1,q}(U_k(n_{p-1})+q)}{U_k(n_{p-1})}+\frac{h_{p-1}-U_k(n_{p-1})}{h_{p-1}}M_{p-1,q}\\
&=M_{p-1,q}\Bigl(1+\frac{q}{h_{p-1}}\Bigr).
\end{align*}
By iterating this calculation \(p\) times, one obtains (\ref{eq:RepDp}).\\
Now, by definition of $h_p, p \in \mathbb{N}$, we have
\[U_{(k+1)\vee s}(n_0)+p+\beta\Bigl(\frac{p}{l}-1\Bigr)\leq h_p\leq U_{(k+1)\vee s}(n_0)+p+\beta\frac{p}{l}.\]
Setting \(x:=\frac{l}{l+\beta}\) and \(y:=(U_{(k+1)\vee s}(n_0)-\beta)\frac{l}{l+\beta}\) (which is non-negative as for each colour there are at least \(\beta\) balls) for abbreviation this is equivalent to
\begin{align}
\frac{y+p}{x} &\leq h
_p\leq \frac{y+p+\beta x}{x}.\label{eq:nyx}
\end{align}
For the upper bound in \eqref{eq:MomentCond} use (\ref{eq:RepDp}) and (\ref{eq:nyx}) to get
\begin{align*}
\E M_{p,q}\leq M_{0,q} \cdot\prod_{i=0}^{p-1}\Bigl(1+\frac{qx}{y+i}\Bigr) 
&= M_{0,q}  \cdot \frac{\Gamma(y)}{\Gamma(qx+y)} \cdot\frac{\Gamma(qx+y+p)}{\Gamma(y+p)}.
\end{align*}
Thus, using Lemma~\ref{L:Stirling}, there exist $\tilde{C}, C>0$ such that
\begin{align*}
\E U_k(n_p)^q&\leq \E M_{p,q} \leq M_{0,q}  \cdot \frac{\Gamma(y)}{\Gamma(qx+y)} \cdot\frac{\Gamma(qx+y+p)}{\Gamma(y+p)} \\
&\leq \tilde{C} (y+p)^{qx} < C n_p^{qx}= C n_p^{ql/(l+\beta)}
\end{align*}
for all $p \in \mathbb{N}$, proving the upper bound. 

For the lower bound, use Jensen's inequality and \eqref{eq:RepDp}, \eqref{eq:nyx}, Lemma~\ref{L:Stirling} and similar reasoning as above to see that there exist $\tilde{c}, c>0$ such that
\begin{align*}
    \E U_k(n_p)^q&=\E M_{p,1}^q\geq(\E M_{p,1})^q \\
    &\geq \left(M_{0,1}\cdot\prod_{i=0}^{p-1}\Bigl(1+\frac{x}{y+i+\beta x}\Bigr) \right)^q \\
    &= M_{0,1}^q \left(\frac{\Gamma(y+\beta x)}{\Gamma(x+y+\beta x)}\right)^q \left(\frac{\Gamma(x+y+\beta x+p)}{\Gamma(y+\beta x+p)}\right)^q \\
    &\geq \tilde{c} (y+\beta x +p)^{qx} > c n_p^{qx}=c n_p^{ql/(l+\beta)},
\end{align*}
for all $p \in \mathbb{N}$, which finishes the proof.

\end{proof}

\subsection{Proof of Lemma \ref{L:Hilfs_Beta}}\label{Sec:BetaIndProof}
\begin{proof}[Proof of Lemma \ref{L:Hilfs_Beta}]
\begin{enumerate}[label=\arabic*)]
\item We adapt the proof of Lemma~3.3 from \cite{mori05}. By definition $B_k^\downarrow=\zeta_{k}/(\zeta_1+\dots+\zeta_{k})$ and so the statement follows immediately for $k=1$. Thus, keep $k>1$ fixed in the following.
We disregard all colours in the urn model but \(1\) through \(k\) and further simplify the model: To find the distribution of $B_k^\downarrow$ it is sufficient to view all balls of colours $1$ to $k-1$ as being \enquote{black} and those of colour \(k\) as being \enquote{white}. At the time \(n_0^k\), when the last relevant colour is added to the urn, the number $w_k$ of white and $b_k$ of black balls is deterministic and given by
$$ (w_k,b_k)=\begin{cases}(C_k(0),\sum_{i=1}^{k-1}C_i(0)) \quad&\text{if } 1<k\leq s\\ (\beta,\sum_{i=1}^{s}C_i(0)+(l+\beta)(k-1-s)+l) \quad &\text{if } k>s\end{cases}. $$
Now at each time either the number of black and white balls does not change or a new ball is added to one of them. Conditionally on the latter, the transition probabilities are the same as in a traditional P{\'o}lya urn and therefore Lemma \ref{L:Polya_Dir} yields that the vector \((B_k^\downarrow,1-B_k^\downarrow)\) has a Dirichlet distribution, whose marginals are known to be beta distributions with the given parameters.

\item We provide an alternative construction for $C^r(n)$ to help us derive the desired independence properties:\\
We start with the usual setup of an urn with \(s\) different colours and starting amounts \(C_0(0),\dots,C_s(0)\). From there on, instead of picking a ball directly, in each time step we flip a sequence of independent Bernoulli coins, determined below, until we observe heads for the first time. Then we add a new ball of one colour, based on which flip resulted in heads, and proceed to the next time step. The individual coins may have different success probabilities, i.e. probabilities of showing heads. More precisely, for the ball to be selected at time \(n\), let \(m\) be the currently largest numbered colour in the urn. If \(m>r\) the first flip in a time step always represents the colours \(>r\). If that coin flip shows heads we add a new ball to one of those (which one specifically is irrelevant to us). If it shows tails, however, we start traversing the remaining \(r\) colours in backwards order. This means we start by flipping for colour \(r\), then either add a new ball to it if the flip showed heads or otherwise continue to \(r-1\) and so on. If \(m\leq r\) we skip the flips for colours \(m+1,m+2,\dots,\)``\(>r\)''. To also incorporate the migration of new colours in this model, after every \(l\)th time step, we add additional \(\beta\) balls to colour \(m+1\) or if \(m\geq r\) to the ``\(>r\)''-category. Next, for this process to have the same distribution as $(C^r(n))_{n \in \mathbb{N}_0}$, we need to find suitable success probabilities \(p_i(n)\) when flipping for colour \(i\) in time step \(n\mapsto n+1\). To this end, we write 
$$ U_k(n):=\sum_{i=1}^k C_i(n), \;\;\; k \geq 1,$$
for abbreviation and set
\begin{align*}
\mbox{if }i\leq r:&&p_i(n)&=\frac{C_i(n)}{C_1(n)+\dots+C_i(n)}=1-\frac{U_{i-1}(n)}{U_i(n)}, \\
\mbox{ and } && p_{>r}(n)&=\frac{C_{r+1}(n)+\dots+C_m(n)}{C_1(n)+\dots+C_m(n)}=1-\frac{U_{r}(n)}{U_m(n)}.
\end{align*}
To verify that this yields exactly the same transition probabilities as in the urn model, write (with \(\Fscr_n=\sigma(C_i(t):\,t\leq n,\,i,t\in\N)\)):\\
for colours \(i\leq r\):
\begin{align*}
&\Pp(C_i(n+1)-C_i(n)=1|\Fscr_{n})=\frac{C_i(n)}{U_m(n)}=\frac{U_{r}(n)}{U_m(n)} \frac{U_{r-1}(n)}{U_r(n)}\dots \frac{U_i(n)}{U_{i+1}(n)}\frac{C_i(n)}{U_i(n)}\\
&\qquad=(1-p_{>r}(n))\cdot (1-p_r(n))\cdot\hdots\cdot (1-p_{i+1}(n))\cdot p_{i}(n)
\end{align*}
and for colour \(``>r"\):
$$ \Pp([U_{m}(n+1)-U_r(n+1)]-[U_m(n)-U_r(n)]=1|\Fscr_{n})=\frac{U_m(n)-U_r(n)}{U_m(n)}=p_{>r}(n).  $$
So we conclude that the two processes indeed have the same transition probabilities and proceed to show the independence of the $B_i^\downarrow $'s.

For that let \((Y_j^i)_{j=1}^\infty\) be the random variables which contain the outcome of the \(j\)th flip for color \(i\), \(i=1,\dots,r\). Define \(T_j^i\) to be the step in the above urn model when coin \(Y_j^i\) is flipped, noting that \(T_j^i,\,j>0\) is random but a.s. finite by Lemma \ref{L:InftyDegree}. The success probability \(p_i(n)\) at time $n=T_j^i$ is dependent on prior flips and given by 
$$ \frac{C_i(n)}{C_1(n)+\dots+C_i(n)}=\frac{C_i(T^i_1)+\sum_{k=1}^{j-1}Y_k^i}{U_i(T^i_1)+j-1} $$
for colours $i \leq r$, as $U_i(T^i_1)$ balls of colours 1 to \(i\) and \(C_i(T^i_1)\) balls of colour \(i\) are present when we first flip the coin for this colour and at the $j$-th flip further $j-1$ balls of colours 1 to \(i\) have already been added to the urn, with $\sum_{k=1}^{j-1}Y_k^i$ of them to colour $i$. Thereby, the $p_i(n)$ at time $n=T_j^i$ are completely determined by deterministic starting values $U_i(T^i_1),C_i(T^i_1)$ and the previous flips for colour \(i\) and the sequences \((Y_j^1)_j, \ldots, (Y_j^r)_j\) are jointly independent.\\
With this, the independence of the \(B_i^\downarrow\)'s follows since for $j \to \infty$,
\begin{align*}
\label{Eq:IndBis}\frac{C_i(T^i_1)+\sum_{k=1}^{j-1}Y_k^i}{U_i(T^i_1)+j-1}=\frac{C_i(T_j^i)}{C_1(T_j^i)+\dots+C_i(T_j^i)} \to B_i^\downarrow \qquad\quad\text{ for }i=2,\dots,r.
\end{align*}
\item 
In addition to the sequences introduced in 2), let \((Y_j^{>r})_{j=1}^\infty\) be the random variables which contain the outcome of the \(j\)th flip for color \(``>r"\) and let $T_j^{>r}, j>0$ be the step in the urn model when coin $Y_j^{>r}$ is flipped. The first flip $T_1^{>r}$ takes place at time $n_0^{r+1}$ with $U_{(r+1) \vee s}(n_0^{r+1})$ total balls in the urn, and $U_{(r+1) \vee s}(n_0^{r+1})-U_{r}(n_0^{r+1})$ of them of colour \(``>r"\), both numbers being deterministic. After $n_0^{r+1}$, the coin for colour \(``>r"\) is flipped in every consecutive step of the urn model. Thus, at the $j$-th flip for this colour, the total number of balls has grown to $U_{(r+1) \vee s}(n_0^{r+1})+j-1+\beta\floor{\frac{j-1}{l}}$ (where the last summand is due to the immigration), and the number of colour \(``>r"\) has grown to $U_{(r+1) \vee s}(n_0^{r+1})-U_{r}(n_0^{r+1})+\sum_{k=1}^{j-1}Y_k^{>r}+\beta\floor{\frac{j-1}{l}}$. Therefore, the success probability $p_{>r}(n)$ at time $n=T_j^{>r}$ is given by
$$ \frac{U_{(r+1) \vee s}(n_0^{r+1})-U_{r}(n_0^{r+1})+\sum_{k=1}^{j-1}Y_k^{>r}+\beta\floor{\frac{j-1}{l}}}{U_{(r+1) \vee s}(n_0^{r+1})+j-1+\beta\floor{\frac{j-1}{l}}}, $$
and the sequence \((Y_j^{r>})_{j=1}^\infty\) is independent of the sequences introduced in 2). 

Observe that
\[\frac{1}{j^{l/(l+\beta)}}U_r(n_0^{r+1}+j)=\frac{1}{j^{l/(l+\beta)}}(U_r(n_0^{r+1})+\sum_{k=1}^j(1-Y_k^{>r}))\overset{j\to\infty}{\to}\zeta_1+\dots+\zeta_r,\]
and by independence of the coin flip sequences the limit $\zeta_1+\dots+\zeta_r$ is thus independent of \((B_1^\downarrow,\dots,B_r^\downarrow)\) and thus from \((B_1,\dots,B_r)\) as well.
\end{enumerate}
\end{proof}

\subsection{Proof of Proposition~\ref{P:BreiCond}}\label{Sec:BreiCondProof}
\begin{proof}[Proof of Proposition~\ref{P:BreiCond}]
\phantom{}
We start with showing 2). 
\begin{enumerate}
\item[2)] Case \(\alpha \leq p \): Let \(\alpha'>p\geq \alpha\), then by H\"older's inequality
\[\sup_n\E\Bigl(\Bigl\lVert\frac{D(n)}{n^{l/(l+\beta)}}\Bigr\rVert_p^\alpha\Bigr)\leq \sup_n\Bigl(\E\Bigl(\Bigl\lVert\frac{D(n)}{n^{l/(l+\beta)}}\Bigr\rVert_p^{\alpha'}\Bigr)\Bigr)^{\frac{\alpha}{\alpha'}}=\Bigl(\sup_n\E\Bigl(\Bigl\lVert\frac{D(n)}{n^{l/(l+\beta)}}\Bigr\rVert_p^{\alpha'}\Bigr)\Bigr)^{\frac{\alpha}{\alpha'}}\]
and this case follows whenever there is a bound for the next one.\\
Case \(\alpha > p\): Because of the inverse Minkowski's inequality (see, e.g., III 2.4 Theorem 9 in \cite{bullen03}), \(p\)-quasinorms with \(0<p<1\) are concave on \(\R_{\geq 0}^d\). We can thus apply Jensen's inequality to
\[\E\Bigl(\Bigl\lVert\frac{D(n)}{n^{l/(l+\beta)}}\Bigr\rVert_p^\alpha\Bigr)=\E\Bigl(\Bigl\lVert\Bigl(\frac{D(n)}{n^{l/(l+\beta)}}\Bigr)^\alpha\Bigr\rVert_{\frac{p}{\alpha}}\Bigr)\leq \Bigl(\sum_{i=1}^{N(n)}\Bigl(\E\frac{D_i^\alpha(n)}{n^{\alpha\cdot l/(l+\beta)}}\Bigr)^{\frac{p}{\alpha}}\Bigr)^{\frac{\alpha}{p}}.\]
An application of Proposition \ref{P:martingale_p_factorial}, 1) and Lemma \ref{L:ineq_mart_power_form} yields that there exists a $C>0$ such that the right hand side is bounded above by
\begin{align*}
C\cdot\Biggl(\sum_{i=1}^{N(n)}\Biggl(c(n,\alpha)^{-1}\E {D_i(n)+\alpha-1\choose \alpha}\Biggr)^{\frac{p}{\alpha}}\Biggr)^{\frac{\alpha}{p}}.
\end{align*}
Now, each base in the summands on the right is the expectation of a closed martingale and by Proposition \ref{P:martingale_p_factorial}, 3) 
\begin{align}\label{Eq:ZetaNormSum}
\Biggl(\sum_{i=1}^{N(n)}\Biggl(c(n,\alpha)^{-1}\E {D_i(n)+\alpha-1\choose \alpha}\Biggr)^{\frac{p}{\alpha}}\Biggr)^{\frac{\alpha}{p}}=\Bigl(\sum_{i=1}^{N(n)}(\E \zeta_i^\alpha)^{\frac{p}{\alpha}}\Bigr)^{\frac{\alpha}{p}}.
\end{align}
By \eqref{eq:expec_zetap} and Proposition \ref{P:martingale_p_factorial}, 1) we have, for any fixed $k \in R_{\geq 0}$ and all $i>N(0)$
\begin{align} \label{Eq:ZetaMomentsAsymp}
\E (\zeta_i^k) = C(k) c((i-s),k)^{-1} \sim C(k) (i-s)^{-k\cdot l/(l + \beta)},
\end{align}
where $C(k)$ denotes a constant that does not depend on $i$. Thus, the limit as $n \to \infty$ of the right hand side of \eqref{Eq:ZetaNormSum} is finite if and only if
\begin{align*} 
\frac{\alpha \cdot l}{l+\beta} \cdot \frac{p}{\alpha} > 1 \;\;\; \Leftrightarrow \;\;\;  p > \frac{l+ \beta}{l}, 
\end{align*}
which is guaranteed by our assumption. 
\item[1) (a)] Assume first that $\frac{l+\beta}{l}<p<\infty$. Use monotone convergence to get
\begin{align}\label{Eq:MonConvEZeta}
\E(\|\zeta\|_p^p)=\E\sum_{i=1}^\infty\zeta_i^p =\sum_{i=1}^\infty\E\zeta_i^p.
\end{align}
Again from \eqref{Eq:ZetaMomentsAsymp}, this sum is finite under our assumption that $p>(\l+\beta)/l$ and so $\|\zeta\|_p^p<\infty $ a.s. Furthermore, since $\| \zeta\|_\infty \leq \| \zeta\|_p$, the result also follows for $p=\infty$.
\item[1) (b)]
Next we prove the almost sure convergence in \(c_{\norm{\cdot}_p}\). We need to show
\[\Bigl\lVert\frac{D(n)}{n^{l/(l+\beta)}}-\zeta\Bigr\rVert_p^p=\sum_{i=1}^\infty\Bigl|\frac{D_i(n)}{n^{l/(l+\beta)}}-\zeta_i\Bigr|^p\to 0,\,n\to\infty\quad\text{a.s.}\]
By Scheff{\'e}'s theorem the above convergence follows from componentwise convergence, i.e.
 \begin{align}\label{Eq:Pointwiselp} \frac{D_i(n)}{n^{l/(l+\beta)}} \to \zeta_i, \, n \to \infty, \quad\text{a.s.} \end{align}
 for all $i \in \mathbb{N}$ in combination with convergence of the norms, which is
\begin{align}\label{Eq:Normconvlp}\sum_{i=1}^{N(n)}\frac{D_i(n)^p}{n^{pl/(l+\beta)}} \to\sum_{i=1}^\infty\zeta_i^p,\,n\to\infty. \end{align}
Proposition \ref{P:martingale_p_factorial}, 2) implies \eqref{Eq:Pointwiselp}, so we are left to show \eqref{Eq:Normconvlp}. 
To this end, introduce the random variables \(X_i(n):=c(n,p)^{-1}{D_i(n)+p-1\choose p}\), which form, for each $i \in \mathbb{N}$, a martingale, according to Proposition \ref{P:martingale_p_factorial}, 1). Their cumulative sums form submartingales, since
\begin{align*}
\E\Bigl(\sum_{i=1}^{N(n)}X_i(n)\Big|\Fscr_{(n-1)l}\Bigr)=\sum_{i=1}^{N(n)}\E(X_i(n)|\Fscr_{(n-1)l})\geq\sum_{i=1}^{\mathclap{N(n-1)}}X_i(n-1), \; i \in \mathbb{N}.
\end{align*}
By Lemma \ref{L:ineq_mart_power_form} there exists for each $k \in \mathbb{N}$ a \(C\) such that
\begin{align*}
\E\Bigl(\Bigl(\sum_{i=1}^{N(n)}X_i(n)\Bigr)^k\Bigr)\leq C\E\Bigl(\sum_{i=1}^{N(n)}\frac{D_i(n)^p}{n^{pl/(l+\beta)}}\Bigr)^k.
\end{align*}
By \ref{P:BreiCond:moments} with \(\alpha=pk\), we observe that the right hand side is bounded uniformly in \(n\). This means the submartingale is uniformly integrable and therefore convergent almost surely and in \(L_1\) (Chapter XI.14 in \cite{doob94}). To derive the limit we first note that by \ref{P:martingale_p_factorial}, 2)
\[\lim_{n \to \infty} \sum_{i=1}^{N(n)}X_i(n) - \sum_{i=1}^l \frac{\zeta_i^p}{\Gamma(p+1)}=\lim_{n \to \infty} \sum_{i=l+1}^{N(n)}X_i(n) \geq 0\]
for all $l \in \mathbb{N}$. Let $l \to \infty$ to conclude that
\[ \lim_{n \to \infty} \sum_{i=1}^{N(n)}X_i(n) - \sum_{i=1}^\infty \frac{\zeta_i^p}{\Gamma(p+1)} \geq 0.\]
This implies that 
\begin{align*}
&E\Bigl( \Bigl| \lim_{n \to \infty} \sum_{i=1}^{N(n)}X_i(n) - \sum_{i=1}^\infty \frac{\zeta_i^p}{\Gamma(p+1)} \Bigr| \Bigr) \\
&= E\Bigl( \lim_{n \to \infty} \sum_{i=1}^{N(n)}X_i(n) - \sum_{i=1}^\infty \frac{\zeta_i^p}{\Gamma(p+1)} \Bigr)  \\
&= \lim_{n \to \infty} \sum_{i=1}^{N(n)}E(X_i(n))- \sum_{i=1}^\infty E\Bigl(\frac{\zeta_i^p}{\Gamma(p+1)} \Bigr) = 0,
\end{align*}
where we used the $L_1$-convergence of our submartingale together with \eqref{Eq:MonConvEZeta} in the penultimate step and \ref{P:martingale_p_factorial}, 2) in the last step. Thus, 
\begin{align} \lim_{n \to \infty} \sum_{i=1}^{N(n)}c(n,p)^{-1}{D_i(n)+p-1\choose p} =  \sum_{i=1}^\infty \frac{\zeta_i^p}{\Gamma(p+1)} \;\;  \mbox{a.s.} \end{align}
Returning to the original process, we can use Proposition \ref{P:martingale_p_factorial}, 1) and Lemma \ref{L:ineq_mart_power_form} to find a $C>0$ such that 
\begin{align*} \frac{D_i(n)^p}{n^{pl/(l+\beta)}} \leq C \cdot c(n,k)^{-1}{D_i(n)+p-1\choose p} 
\end{align*}
for all $i=1, \ldots, N(n)$ and thus apply Pratt's lemma (\cite{Pratt60}) to finally arrive at \eqref{Eq:Normconvlp} and thereby conclude 1) (b).

\item[3)] For \(p\in (1,\infty)\) the \(p\)-norm on \(\R^d\) among all vectors of equal \(\ell_1\)-norm is minimized if each component takes the same value. Let \(c\) be the cumulated starting weight of the initial \(N(0)\)  vertices. Then
\begin{align*}
\Bigl\lVert\frac{D(n)}{n^{l/(l+\beta)}}\Bigr\rVert_p^p=\frac{1}{n^{p\cdot l/(l+\beta)}}\sum_{i=1}^{\mathclap{N(n)}}D_i^p(n)\geq \frac{(N(0)+n)\cdot (\frac{c+n(l+\beta)}{N(0)+n})^p}{n^{p\cdot l/(l+\beta)}}\sim C\cdot n^{1-\frac{p\cdot l}{l+\beta}}
\end{align*}
for some \(C>0\) and since by assumption \(p<\frac{l+\beta}{l}\) the right-hand side diverges to \(\infty\). Being a.s.\ unbounded, the sequence $D(n)/n^{l/(l+\beta)}$ thus fails to converge.
\end{enumerate}
\end{proof}



\fund 
\noindent There are no funding bodies to thank relating to the creation of this article.

\competing 
\noindent There were no competing interests to declare which arose during the preparation or publication process of this article.

%
%
%
%

\bibliography{bib}

\end{document}